\documentclass[a4paper,12pt]{article}
\usepackage[utf8]{inputenc}
\usepackage[english]{babel}
\usepackage[ruled,section]{algorithm}
\usepackage{makeidx}
\usepackage[font=small]{caption}
\usepackage{amsmath}
\usepackage{amsthm}
\usepackage{amsfonts}
\usepackage{amssymb}
\usepackage{mathrsfs}
\usepackage{graphicx}
\usepackage[margin=25mm]{geometry}
\usepackage{enumerate}
\usepackage{indentfirst}
\usepackage[none]{hyphenat}
\usepackage{color}
\usepackage{authblk}
\makeindex
\pagenumbering{arabic}

\newcommand{\real}{\mathbb{R}}
\newcommand{\n}{\mathbb{N}}

\newcommand{\rN}{ {\mathbb{R}^N} }

\newcommand{\rmd}{\mathrm{d}}
\newcommand{\F}{\mathcal{F}}
\newcommand{\C}{\mathcal{C}}
\newcommand{\Ps}{\mathcal{P}}
\newcommand{\M}{\mathcal{M}}
\newcommand{\Sl}{\mathcal{S}}

\numberwithin{equation}{section}
\newtheorem{theorem}{Theorem}[section]
\newtheorem{lem}[theorem]{Lemma}
\newtheorem{prop}[theorem]{Proposition}
\newtheorem{corol}[theorem]{Corollary}

\theoremstyle{definition}

\newtheorem{rmk}[theorem]{Remark}

\newtheorem{defin}[theorem]{Definition}

\textwidth165mm
\textheight=22cm

\begin{document}

\title{\bf\Large A dynamical system approach to a class of radial weighted fully nonlinear equations}
\author[1]{Liliane Maia\footnote{l.a.maia@mat.unb.br}}
\author[2]{Gabrielle Nornberg\footnote{gabrielle@icmc.usp.br}}
\author[3]{Filomena Pacella\footnote{pacella@mat.uniroma1.it}}
\affil[1]{\small Universidade de Brasília, Brazil}
\affil[2]{\small Instituto de Ciências Matemáticas e de Computação, Universidade de São Paulo, Brazil}
\affil[3]{\small Sapienza Università di Roma, Italy}

{\date{\today}}

\maketitle

{\small\noindent{\bf{Abstract.}} 
In this paper we study existence, nonexistence and classification of radial positive solutions of some weighted fully nonlinear equations involving Pucci extremal operators. 
Our results are entirely based on the analysis of the dynamics induced by an autonomous quadratic system which is obtained after a suitable transformation.
This method allows to treat both regular and singular solutions in a unified way, without using energy arguments.
In particular we recover known results on regular solutions for the fully nonlinear non weighted problem by alternative proofs. 
We also slightly improve the classification of the solutions for the extremal operator $\M^-$.}
\medskip

{\small\noindent{\bf{Keywords.}} {Fully nonlinear equations; critical exponents; regular and singular solutions; dynamical system.}

\medskip

{\small\noindent{\bf{MSC2020.}} {35J15, 35J60, 35B09, 34A34.}

\section{Introduction and main results}\label{Introduction}

In this paper we study positive radial solutions of the following class of fully nonlinear elliptic equations
\begin{align}\label{P} 
\mathcal{M}_{\lambda,\Lambda}^\pm (D^2 u)+|x|^a u^p =0  , \quad
u> 0 \;\; \textrm{ in }  \;\Omega,
\end{align}
where $a> -1$, $p>1$, and $\mathcal{M}^\pm$ are the Pucci's extremal operators which play an essential role in stochastic control theory and mean field games. Here, $0<\lambda\leq\Lambda$ are the ellipticity constants, see Section \ref{section 2} and \cite{CafCab} for their properties.
The set $\Omega\in \rN$, $N\geq 3$, is a radial domain such as $\rN$, a ball $B_R$ of radius $R>0$ centered at the origin, the exterior of $B_R$, or an annulus. 
However, most of the time $\Omega$ will be either the whole space or a ball.

We deal with both regular and singular solutions $u$ of \eqref{P} which are $C^2$ for $r>0$. In the singular case $\Omega$ will be either $\rN\setminus\{0\}$ or $B_R\setminus\{0\}$, and we assume the condition
\begin{align}\label{H singular} \textstyle{\lim_{r\to 0} \,u(r) = +\infty , \;\;\; r=|x|.}
\end{align}
Finally, whenever $\Omega$ has a boundary, we prescribe the Dirichlet condition 
\begin{align}\label{H Dirichlet}
u=0 \textrm{ on } \partial\Omega , \quad \textrm{ or } \quad u=0 \textrm{ on $\partial\Omega\setminus\{0\}$\, under } \eqref{H singular}.
\end{align}

Let us recall some previous results when $a=0$. 
A general existence result in bounded domains $\Omega$ (not necessarily radial) was obtained in \cite{BQnonproper} under the condition 
\begin{align}\label{BQ numbers}
\textstyle{1<p\leq \frac{\tilde{N}_+}{\tilde{N}_+-2} \;\;\textrm{ for } \M^+_{\lambda,\Lambda}\, , \qquad 1<p\leq \frac{\tilde{N}_-}{\tilde{N}_--2} \;\;\textrm{ for } \M^-_{\lambda,\Lambda}}
\end{align}
if $\tilde{N}_+>2$, where $\tilde{N}_\pm$ are the so called dimensional-like numbers
\begin{align}\label{def dimensional-like}
\textstyle{\tilde{N}_+=\frac{\lambda}{\Lambda}(N -1)+1, \qquad \tilde{N}_-=\frac{\Lambda}{\lambda}(N -1)+1.}
\end{align}

The intervals in \eqref{BQ numbers} describe the optimal range for existence of  supersolutions to $\M^\pm$, as shown in \cite{CutriLeoni}. Some extensions and related results can be found in \cite{BApisa, BAScpam}.

When the Pucci's operators reduce to the Laplacian (i.e.\ for $\lambda=\Lambda$), both exponents \eqref{BQ numbers} are equal to $\frac{N}{N-2}$ which is known as Serrin exponent. They do not provide optimal bounds in terms of solutions of \eqref{P}, as it is clear for instance by considering the semilinear case. 

Nevertheless, as far as the radial setting is concerned, critical exponents which represent the threshold for the existence of solutions to \eqref{P} can be defined. They were introduced for $a=0$ by Felmer and Quaas in \cite{FQaihp} in order to study existence and classification of radial positive solutions in $\rN$.
These are also the watershed for existence and nonexistence of positive solutions in the ball. Note that every positive solution in the ball when $a=0$ is radial, by \cite{BdaLioSym}, while this is not true in general for $a\neq 0$, even in the semilinear case.

When $\lambda=\Lambda$  the corresponding critical exponents are the same, both in radial and nonradial settings; see \cite{CGS} for $a=0$, and \cite{GGN} for $a\neq 0$. The identification of critical exponents in the nonradial case for fully nonlinear operators is still open.

Let us recall some preliminary definitions.

\begin{defin}\label{def regular}
Let $u$ be a radial solution of \eqref{P} for $\Omega=\rN$. Set $r=|x|$ and $\alpha=\frac{2+a}{p-1}$. Then $u$ is said to be:\vspace{-0.2cm}
\begin{enumerate}[(i)]
\item \textit{fast decaying} if there exists $c>0$ such that $\lim_{r\to\infty} r^{\tilde{N}-2} u(r) = c$, where $\tilde{N}$ is either $\tilde{N}_+$ if the operator is $\mathcal{M}^+$ or $\tilde{N}_-$ for $\mathcal{M}^-$, see \eqref{def dimensional-like};\vspace{-0.1cm}
		
\item \textit{slow decaying} if there exists $c>0$ such that $\lim_{r\to\infty} r^\alpha u(r) = c$;\vspace{-0.1cm}
		
\item \textit{pseudo-slow decaying} if there exist constants $0<c_1<c_2$ such that \vspace{-0.25cm}
\begin{center}
$c_1 = \liminf_{r\to\infty} r^\alpha u(r) < \limsup_{r\to\infty} r^\alpha u(r) = c_2$.
\end{center}
\end{enumerate}
\end{defin} 

The definitions (i) and (ii) are classical from the theory of Lane-Emden equations. In turn (iii) was introduced in \cite{FQaihp} and is peculiar of the fully nonlinear case. It corresponds to solutions oscillating at $+\infty$ by changing concavity infinitely many times. 

\begin{theorem}[Theorems 1.1 and 1.2 in \cite{FQaihp}]\label{FQ Th introd}
Assume $a=0$, $\tilde{N}_+>2$, and $\lambda<\Lambda$. Then there exist critical exponents $p^*_+$, $p^*_-$ satisfying the bounds \vspace{-0.1cm}
\begin{center}
$\max\left\{\frac{\tilde{N}_+}{\tilde{N}_+-2}, \frac{N+2}{N-2}\right\}< p^*_+< \frac{\tilde{N}_++2}{\tilde{N}_+-2}$\quad and \quad $\frac{\tilde{N}_-+2}{\tilde{N}_--2}
< p^*_-< \frac{N+2}{N-2}$,
\end{center}
such that the following holds for $\Omega=\rN$:
\vspace{-0.2cm}
	\begin{enumerate}[(i)]
	\item if $p\in (1,p^*_{\pm})$ there is no nontrivial radial solution of \eqref{P};\vspace{-0.1cm}
		
	\item if $p=p^*_{\pm}$ there exists a unique fast decaying radial solution to \eqref{P};\vspace{-0.1cm}
		
	\item If $p>p^*_{\pm}$ there exists a unique  radial solution of \eqref{P}, which is either slow decaying or pseudo-slow decaying.\vspace{-0.2cm}
	\end{enumerate}
In (i) and (ii) uniqueness is meant up to scaling.
\end{theorem}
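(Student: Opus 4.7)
The plan is to recast \eqref{P} as an autonomous planar quadratic system and read off the trichotomy (i)--(iii) from its phase portrait, as suggested by the abstract. Setting $t = \log r$ and
\begin{equation*}
X(t) = \frac{r u'(r)}{u(r)}, \qquad Y(t) = r^{2} u(r)^{p-1},
\end{equation*}
one computes $\dot Y = Y[\,2 + (p-1)X\,]$ and $r^{2}u''/u = \dot X - X + X^{2}$. Substituting into the radial form of $\M^{\pm}(D^{2}u) + u^{p} = 0$ then yields a piecewise quadratic ODE, whose two smooth branches are selected by the sign of $\dot X - X + X^{2}$ (equivalently, by the sign of $u''$): on the convex branch the linear drift in $X$ involves the parameter $\tilde N_{\pm}$, whereas on the concave branch it involves the genuine dimension $N$.

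The next step is to identify the equilibria in $\{Y \geq 0\}$. The origin $O = (0,0)$ corresponds to regular behaviour at $r = 0$, since $X,Y \to 0$ whenever $u(0)$ is finite and $u'(0) = 0$; the boundary equilibrium $P_{\infty} = (-(\tilde N_{\pm}-2),\,0)$ captures the fast-decaying tail $u \sim c\, r^{-(\tilde N_{\pm}-2)}$; and the interior equilibrium $P_{s} = (-\alpha, Y_{s})$, with $\alpha = 2/(p-1)$ and $Y_{s}$ determined algebraically by $\dot X = 0$ (positive only when $p > \tilde N_{\pm}/(\tilde N_{\pm}-2)$), represents the slow-decaying power profile $c\, r^{-\alpha}$. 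Up to the scaling symmetry of \eqref{P}, every regular radial solution corresponds to the orbit leaving $O$ along its one-dimensional unstable manifold (the linearization at $O$ on the concave branch is a saddle with eigenvalues $2-N$ and $2$).

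The heart of the argument is then a phase-plane analysis around $P_{s}$. Computing the Jacobian there on the branch containing $P_{s}$, one finds that its discriminant vanishes exactly at $p = (\tilde N_{\pm}+2)/(\tilde N_{\pm}-2)$; above this value $P_{s}$ is a stable focus and orbits approaching it oscillate infinitely often (pseudo-slow decay), while below it $P_{s}$ is a stable node (slow decay). The lower bounds on $p^{*}_{\pm}$ follow from the linearization at $P_{\infty}$, which changes hyperbolic type at the Serrin exponent $\tilde N_{\pm}/(\tilde N_{\pm}-2)$, and from the concave branch, which behaves like the Laplacian in dimension $N$ and therefore feels the exponent $(N+2)/(N-2)$. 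Tracking the orbit from $O$ as $p$ varies and using a shooting/monotonicity argument in $p$, one identifies a unique threshold $p^{*}_{\pm}$: below it the orbit exits $\{Y>0\}$ in finite $t$ (so $u$ vanishes at some finite radius, giving a ball solution but no entire one); at $p = p^{*}_{\pm}$ there is a unique heteroclinic $O \to P_{\infty}$, i.e.\ the fast-decaying entire solution; above it the orbit stays in $\{Y>0\}$ and converges to, or spirals into, $P_{s}$.

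The main obstacle I foresee is the non-smoothness of the vector field across the curve $\dot X - X + X^{2} = 0$ where $u''$ changes sign: one has to show that along the relevant orbit this curve is crossed transversally and in a sufficiently controlled manner, so that the qualitative conclusions drawn inside each smooth branch glue into a global statement, and in particular that $P_{s}$ sits on only one branch throughout the argument. A secondary difficulty is ruling out limit cycles around $P_{s}$, which is needed for uniqueness of the asymptotic profile in case (iii); I would attempt this via a Dulac-type function or an explicit monotone quantity tailored to the quadratic structure of each branch.
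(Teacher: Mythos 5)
Your overall strategy---recasting the radial equation as a piecewise--smooth autonomous quadratic planar system and reading the trichotomy off its phase portrait---is the same as the paper's: your $Y=r^{2}u^{p-1}$ is exactly the product $XZ$ of the variables in \eqref{X,Z a}, and your equilibria $O$, $P_{\infty}$, $P_{s}$ are the paper's $N_{0}$, $A_{0}$, $M_{0}$ up to the sign convention. However, your local analysis at $P_{s}$ contains a genuine error that breaks case (iii). On the convex branch the Jacobian at $P_{s}=(-\alpha,Y_{s})$ has trace $2\alpha-(\tilde N_{\pm}-2)$ and determinant $(p-1)\alpha\,(\tilde N_{\pm}-2-\alpha)>0$ whenever $P_{s}$ lies in the open upper half--plane. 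What vanishes at $p=(\tilde N_{\pm}+2)/(\tilde N_{\pm}-2)$ is therefore the \emph{trace}, not the discriminant: $P_{s}$ is a \emph{source} for $\tilde N_{\pm}/(\tilde N_{\pm}-2)<p<(\tilde N_{\pm}+2)/(\tilde N_{\pm}-2)$, a center at that exponent, and a sink only above it (cf.\ Proposition \ref{local study stationary points}(4) and the Appendix). Consequently, for $p\in\bigl(p^{*}_{\pm},(\tilde N_{\pm}+2)/(\tilde N_{\pm}-2)\bigr)$ the regular orbit cannot converge to $P_{s}$ as your ``stable node'' claim asserts, and your mechanism for producing the entire solution in precisely the range where pseudo-slow decay occurs collapses.

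Relatedly, you misattribute pseudo-slow decay to spiralling into a stable focus. If the orbit converges to $P_{s}$---node or focus---then $Y=r^{2}u^{p-1}\to Y_{s}$, so $\lim_{r\to\infty}r^{\alpha}u(r)$ exists and the solution is \emph{slow} decaying in the sense of Definition \ref{def regular}(ii); pseudo-slow decay ($\liminf<\limsup$) occurs exactly when the $\omega$-limit is a nontrivial periodic orbit around $P_{s}$ (Proposition \ref{prop classifi regular}). Hence limit cycles are not a ``secondary difficulty'' to be ruled out: for $p\in(p^{*}_{\pm},p^{p,a}_{\pm})$ they must be \emph{produced} (the regular orbit is bounded by the a priori estimates of Proposition \ref{AP bounds} and cannot tend to the source $P_{s}$, so Poincar\'e--Bendixson forces a limit cycle), while Dulac's criterion excludes them for $p> (\tilde N_{+}+2)/(\tilde N_{+}-2)$, resp.\ $p\geq (N+2)/(N-2)$ for $\M^{-}$---which is also what yields the strict bounds $p^{*}_{+}<\frac{\tilde N_{+}+2}{\tilde N_{+}-2}$ and $p^{*}_{-}<\frac{N+2}{N-2}$ and the asymmetry between the two operators that your sketch does not address. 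Finally, your shooting/monotonicity step needs more care in your coordinates: unlike the paper's $N_{0}$, the unstable direction at your $O$ is $p$-independent at linear order, so the non-crossing argument behind uniqueness of $p^{*}_{\pm}$ does not follow from the linearization alone.
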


In addition, in the case of $\mathcal{M}^+$ the authors made precise the range of the exponent $p$ for which pseudo-slow decaying solutions exist.

The existence of a critical exponent unveils an important feature of the Pucci's operators. It reflects some intrinsic properties of these operators and induces concentration phenomena besides of energy invariance, see \cite{BGLPcv}, as it happens in the classical semilinear case.

The proof of Theorem \ref{FQ Th introd} in \cite{FQaihp} is involved. It is a combination of the Emden-Fowler phase plane analysis and the Coffman Kolodner technique. The latter consists in differentiating the solution with respect to the exponent $p$, and then studying a related nonhomogeneous differential equation, from which they derive the behavior of the solutions for $p$ on both right and left hand sides of $p^*_\pm$, as well as the uniqueness of the exponent $p$ for which a fast decaying solution exists.

In this paper we study both regular and singular solutions of the more general weighted problem \eqref{P}. For the regular ones we prove results similar to those of Theorem \ref{FQ Th introd} but with different tools entirely based on a dynamical system approach.

Our solutions are understood in the classical sense out of $0$ and they are of class $C^1$ up to $0$, since $a>-1$, see Proposition \ref{prop decreasing concave}.
Before stating our results it is useful to fix some notations for relevant exponents, depending also on the number $a > -1$ which characterizes the weight in \eqref{P}:
\begin{align}\label{critical exponents a}
{p^{p,a}_\pm=\frac{\tilde{N}_\pm+2a+2}{\tilde{N}_\pm-2},	\quad p^{s,a}_\pm=\frac{\tilde{N}_\pm+a}{\tilde{N}_\pm - 2},	\quad p_\Delta^a=\frac{N+2+2a}{N-2},	\quad \alpha =\frac{2+a}{p-1}.}
\end{align}

\begin{theorem}[$\mathcal{M}^+$ regular solutions]\label{Th M+}
Assume $\tilde{N}_+>2$, and $\lambda<\Lambda$. Then there exists a critical exponent $p^*_{a+}$ such that \vspace{-0.4cm}
\begin{align}\label{critical ordering M+}
\max\{p^{s,a}_+,p^a_\Delta\} <p^*_{a+}< p^{p,a}_+,
\end{align}\vspace{-0.1cm}
and the following assertions hold:
	\begin{enumerate}[(i)]
		\item if $p\in (1,p^*_{a+})$ there is no nontrivial radial solution of \eqref{P} in the whole $\rN$, while for any $R>0$ there exists a unique radial solution in the ball $B_R$;
		
		\item if $p=p^*_{a+}$ there exists a unique fast decaying radial solution of \eqref{P} in $\rN$;
		
		\item if $p\in (p_{a+}^*,p^{p,a}_+]$ there is a unique pseudo-slow decaying radial solution to \eqref{P} in $\rN$;
		
		\item if $p>p^{p,a}_+$ there exists a unique slow decaying radial solution of \eqref{P} in $\rN$;
		
		\item if $p>p_{a+}^*$ there is no nontrivial solution to \eqref{P}, \eqref{H Dirichlet} when $\Omega$ is a ball.
	\end{enumerate}\vspace{-0.05cm}
In (ii)--(iv) uniqueness is meant up to scaling.
\end{theorem}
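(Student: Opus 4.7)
The plan is to reduce \eqref{P} to an autonomous planar quadratic dynamical system and extract the theorem from its global phase portrait. By Proposition~\ref{prop decreasing concave}, every radial regular solution satisfies $u'(r)<0$ for $r>0$, so the Hessian $D^2u$ has eigenvalues $u''$ (radial, simple) and $u'/r<0$ (tangential, multiplicity $N-1$). Consequently $\mathcal{M}^+(D^2u)$ equals $\Lambda u''+\lambda(N-1)u'/r$ in the convex region $\{u''\ge 0\}$ and $\lambda\bigl(u''+(N-1)u'/r\bigr)$ in the concave region $\{u''\le 0\}$: the equation behaves as a Laplacian in effective dimension $\tilde N_+$ in the first regime and in dimension $N$ in the second. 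Introducing the scale-invariant variables
$$X(s)=-\frac{r u'(r)}{u(r)},\qquad Y(s)=r^{2+a}u(r)^{p-1},\qquad s=\log r,$$
the ODE becomes a planar autonomous quadratic system in $(X,Y)$, whose coefficients change linearly along the curve $\{u''=0\}$ (an explicit ray in the phase plane). The autonomy reflects the scaling invariance $u\mapsto \mu^{\alpha}u(\mu\,\cdot)$ of \eqref{P}.

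Next I would locate and classify the equilibria. In the convex regime the system has three fixed points: the origin $O=(0,0)$, the fast-decay point $A=(\tilde N_+-2,0)$, and the slow-decay point $P=(\alpha,Y_P)$ with $Y_P$ determined algebraically. A direct linearization shows that $O$ is a saddle whose unstable manifold is precisely the one-parameter family of regular solutions near $r=0$ (indeed $X\to 0$, $Y\to 0$ as $r\to 0^+$ off $u(0)<\infty$, $u'(0)=0$). The point $A$ encodes the fast decay $u\sim c\,r^{-(\tilde N_+-2)}$, and is a saddle for $p>p^{s,a}_+$. The point $P$ corresponds to the slow profile $u\sim c\,r^{-\alpha}$, and its type flips across the exponents in \eqref{critical exponents a}: $P$ is a stable node for $p>p^{p,a}_+$, an unstable focus in a left-neighbourhood of $p^{p,a}_+$ within $(p^{s,a}_+,p^{p,a}_+)$, and loses relevance for $p\le p^{s,a}_+$. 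The pseudo-slow behaviour of Definition~\ref{def regular}(iii) is exactly that of orbits trapped in a bounded region surrounding an unstable focus $P$.

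The core of the argument is then a shooting in the parameter $p$. Let $\Gamma_p$ denote the unique trajectory leaving $O$ along the unstable direction, i.e.\ the phase-plane image of any regular solution of \eqref{P}. By continuous dependence on $p$, together with barrier and trapping arguments (comparison with the semilinear critical exponent $p_\Delta^a$ in the concave regime and with the Serrin-type threshold $p^{s,a}_+$ in the convex one), one shows that for $p$ close to $1$ the orbit $\Gamma_p$ blows up in $X$ at a finite logarithmic time, i.e.\ $u$ vanishes at a finite radius, giving the unique Dirichlet ball solution of~(i); while for large $p$ the orbit $\Gamma_p$ is captured by $P$ (slow decay) or oscillates around it (pseudo-slow). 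Setting
$$p^*_{a+}:=\sup\{p>1:\Gamma_p\text{ reaches }\{u=0\}\text{ at a finite radius}\},$$
continuity of separatrices forces $\Gamma_{p^*_{a+}}$ into the stable manifold of $A$, producing the fast-decaying solution of~(ii). Strict monotonicity in $p$ of the direction of the vector field along a suitable transversal (crucially, across the switching ray $u''=0$) gives uniqueness of $p^*_{a+}$ and of the selected orbit, and splits the regime $p>p^*_{a+}$ into (iii) and~(iv) according to whether $P$ is a focus or a node. Part~(v) is then immediate: for $p>p^*_{a+}$ the orbit $\Gamma_p$ never reaches $\{u=0\}$, ruling out Dirichlet solutions in a ball. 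The bounds in \eqref{critical ordering M+} follow from the same trapping arguments at the threshold exponents.

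The main obstacles I expect are two. First, the strict monotonicity in $p$ needed to define $p^*_{a+}$ unambiguously and to prove uniqueness of the fast-decaying solution: this must replace the Coffman--Kolodner differentiation used in \cite{FQaihp} by a purely phase-plane transversality argument that has to be propagated across the Pucci switching curve. Second, locating $p^*_{a+}$ strictly inside $(\max\{p^{s,a}_+,p_\Delta^a\},p^{p,a}_+)$: this amounts to constructing forward-invariant regions at these endpoint exponents, which is delicate because the comparisons mix the convex regime (governed by $p^{s,a}_+$) with the concave one (governed by $p_\Delta^a$).
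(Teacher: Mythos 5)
Your strategy is essentially the paper's: pass to an autonomous planar quadratic system built from $X=-ru'/u$ and a second scale-invariant quantity, classify the equilibria, identify the regular solutions with the unique separatrix $\Gamma_p$ issuing from the equilibrium on the axis $\{X=0\}$, define $p^*_{a+}$ as the supremum of the set $\C$ of exponents whose regular orbit reaches $\{u=0\}$, and derive uniqueness and the trichotomy (ii)--(iv) from a non-intersection/monotonicity-in-$p$ argument for these separatrices (the paper's Proposition~\ref{prop crossing Gammap}, exploiting that only one component of the field depends on $p$). The only structural difference is your second coordinate $Y=r^{2+a}u^{p-1}=XZ$, which collapses the paper's two axis equilibria $O$ and $N_0$ into a single saddle at the origin; this is harmless here (the unstable manifold of your origin is indeed the regular separatrix), but the paper's choice $Z=-r^{1+a}u^p/u'$ keeps them apart, which makes the identification of $\Gamma_p$ and the $p$-ordering of tangent directions at $N_0$ cleaner.

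The genuine gap is the strict inequality \eqref{critical ordering M+} together with the exact determination of the outer regimes, and the ``barrier/trapping/forward-invariant region'' arguments you invoke will not fill it. The missing ingredient is a Bendixson--Dulac computation: with the weight $\varphi=X^\alpha Z^\beta$, $\beta=\frac{3-p}{p-1}$, the weighted divergence of the field has a definite sign on each of the two regions separated by the concavity line $\ell_+$, giving that \eqref{DS+} has no periodic orbits for $p\le p_\Delta^a$ or $p>p^{p,a}_+$, and at $p=p^{p,a}_+$ none crossing $\ell_+$ twice (Proposition~\ref{Dulac}). This is what (a) forces $\Gamma_p$ to blow up for $p<\max\{p^{s,a}_+,p_\Delta^a\}$ and to converge to $M_0$ for $p>p^{p,a}_+$ --- without it, Poincar\'e--Bendixson leaves a periodic $\omega$-limit as an unexcluded alternative in your ``$p$ near $1$'' and ``$p$ large'' steps --- and (b) yields the strict inequalities: once $p^*_{a+}\in\F$, the region bounded by $\Gamma_{p^*_{a+}}$ and the axes is invariant and contains the source $M_0$, so it must contain periodic orbits crossing $\ell_+$ twice, which is impossible at $p_\Delta^a$ and at $p^{p,a}_+$; hence $p^*_{a+}$ is neither endpoint. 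Without this device your plan only gives $p^*_{a+}\in[\max\{p^{s,a}_+,p_\Delta^a\},\,p^{p,a}_+]$, and does not pin down $\Ps=(p^*_{a+},p^{p,a}_+]$ and $\Sl=(p^{p,a}_+,\infty)$. A further small caveat: for (iii) you need $M_0$ to be a source on all of $(p^{s,a}_+,p^{p,a}_+)$ (and a center at $p^{p,a}_+$), not merely an unstable focus in a left-neighbourhood of $p^{p,a}_+$; the eigenvalue computation in the Appendix supplies this.
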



\begin{theorem}[$\mathcal{M}^-$ regular solutions]\label{Th M-}
If $\lambda<\Lambda$, then there exists a critical exponent $p^*_{a-}$ satisfying
\begin{align}\label{critical ordering M-}
\textstyle{p^{p,a}_-< p^*_{a-}< p^a_\Delta,}
\end{align}\vspace{-0.1cm}
and there exists $\varepsilon>0$ such that:
\begin{enumerate}[(i)]
	\item if $p\in (1,p^*_{a-})$ there is no nontrivial radial solution of \eqref{P} in the whole $\rN$, while for any $R>0$ there exists a unique radial solution of the Dirichlet problem \eqref{P}, \eqref{H Dirichlet} in $B_R$;

	\item if $p=p^*_{a-}$ there exists a unique fast decaying radial solution of \eqref{P} in $\rN$;
		
	\item if $p\in (p_{a-}^*,p^a_\Delta -\varepsilon]$ there is a unique pseudo-slow or slow decaying radial solution of \eqref{P} in $\rN$;\vspace{-0.2cm}
		
	\item if $p>p^a_\Delta-\varepsilon$ there exists a unique slow decaying radial solution of \eqref{P} in $\rN$;
	
	\item if $p>p_{a-}^*$ there is no nontrivial solution to \eqref{P}, \eqref{H Dirichlet} when $\Omega$ is a ball.
\end{enumerate}\vspace{-0.05cm}
In (ii)--(iv) uniqueness is meant up to scaling.
\end{theorem}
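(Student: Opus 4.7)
The plan is to adopt the dynamical system approach developed for $\M^+$ in Theorem \ref{Th M+}, with the key technical twist that for $\M^-$ the coefficient of $u''$ in the radial operator jumps from $\Lambda$ to $\lambda$ across the locus $\{u''=0\}$, producing two autonomous quadratic systems glued along a line. First, through an Emden--Fowler type change of variables $X(t)=r^{\alpha}u(r)$, $Y(t)\propto rX'(t)/X(t)$, with $t=\ln r$ and $\alpha=(2+a)/(p-1)$, the radial equation for \eqref{P} becomes a piecewise autonomous planar system on $\{X>0\}$. The two relevant equilibria are the origin $O$, encoding both fast decay at $r=\infty$ (through its stable manifold) and regular initial data at $r=0^+$ (through its unstable manifold), and a unique interior equilibrium $P$ corresponding to slow decay at rate $\alpha$. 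Linearization shows $O$ to be a hyperbolic saddle, while $P$ changes type as $p$ crosses $p^{p,a}_-$ and $p^a_\Delta$: for $p$ above $p^a_\Delta$ its eigenvalues are real and negative so $P$ is a stable node, while for $p$ near $p^{p,a}_-$ they have nontrivial imaginary part, allowing spiralling.

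Next, for each $p>1$, I track the unique trajectory $\gamma_p$ associated to a regular positive solution, namely the branch of the unstable manifold of $O$ entering the positivity region. Using the scaling invariance of \eqref{P} together with continuous dependence on $p$, the parameter range partitions into three classes: either $\gamma_p$ exits the positivity region in finite time (and truncation at the first zero of $u$ produces a Dirichlet solution in a ball, giving uniqueness by reparametrisation), or $\gamma_p$ enters $O$ (fast decay on $\rN$), or $\gamma_p$ stays trapped and accumulates on $P$ or on a limit cycle around it (slow or pseudo--slow decay on $\rN$). A shooting/separatrix argument identifies a unique critical $p^*_{a-}$ separating the first class from the third; the two--sided bound \eqref{critical ordering M-} follows from an explicit comparison of the vector field on the curve $\{u''=0\}$ at $p=p^{p,a}_-$ and from a Laplacian barrier at $p=p^a_\Delta$. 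This already yields items (i), (ii), (v), and a coarser form of (iii)--(iv) in which pseudo--slow and slow decay are not yet distinguished.

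The delicate point is sharpening (iv), that is, ruling out pseudo--slow decay for all $p>p^a_\Delta-\varepsilon$. The idea is to build a Dulac-type or Lyapunov function adapted to the energy associated to $P$ which, at $p=p^a_\Delta$, produces a strict trapping neighbourhood of $P$ forbidding limit cycles, and then to extend this to $p>p^a_\Delta-\varepsilon$ by continuous dependence on $p$. The main anticipated obstacle is the gluing between the $\{u''<0\}$ and $\{u''>0\}$ branches: across this curve the vector field is only Lipschitz, so one must verify that orbits cross it transversally and that both the Poincar\'e--Bendixson analysis and the trapping region survive the passage. Ensuring uniformity in $p$ of these constructions, and hence continuity of the whole classification in $p$ up to $p^a_\Delta-\varepsilon$, is the technical heart of the proof.
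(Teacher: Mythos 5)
Your overall strategy — transform the radial ODE into a piecewise autonomous planar system, classify the regular trajectory by its $\omega$-limit via Poincar\'e--Bendixson and a Dulac function, and define $p^*_{a-}$ as the separatrix value between the ``blows up in finite time'' and ``trapped'' regimes — is the same as the paper's, although you work in classical Emden--Fowler coordinates $(r^\alpha u,\, ru'/u)$ rather than the paper's variables $X=-ru'/u$, $Z=-r^{1+a}u^p/u'$, in which the switching locus $\{u''=0\}$ and all nullclines become straight lines (this is precisely what makes the paper's comparison and non-crossing arguments elementary). Items (i), (v) and the coarse form of (iii) are reachable along your route. However, there are two genuine gaps. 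The first is in your treatment of (iv): you propose to show absence of limit cycles at $p=p^a_\Delta$ and then ``extend this to $p>p^a_\Delta-\varepsilon$ by continuous dependence on $p$.'' Non-existence of periodic orbits is \emph{not} an open condition in a parameter (limit cycles can be born under arbitrarily small perturbations, e.g.\ from a center or a weak focus), so this step fails as stated. The paper's argument is different and weaker in a useful way: Dulac at $p=p^a_\Delta$ forces $\omega(\Gamma_{p^a_\Delta})=M_0$, i.e.\ $p^a_\Delta\in\Sl$; since $p^a_\Delta>p^{p,a}_-$ strictly (here $\lambda<\Lambda$ is used), $M_0(p)$ is a \emph{hyperbolic sink} for all $p$ near $p^a_\Delta$, so it has a uniform basin of attraction, and continuous dependence on $p$ over the \emph{finite} time needed for $\Gamma_{p}$ to enter that basin gives $\omega(\Gamma_p)=M_0(p)$ for $p\in(p^a_\Delta-\varepsilon,p^a_\Delta]$. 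No global statement about limit cycles for $p<p^a_\Delta$ is made or needed (indeed the paper explicitly leaves open whether periodic orbits exist in $(p^*_{a-},p^a_\Delta-\varepsilon)$).

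The second gap is in item (ii). To conclude that $p^*_{a-}=\sup\C$ is a fast-decay exponent you must exclude $p^*_{a-}\in\C$ (openness of $\C$), $p^*_{a-}\in\Ps$ (an open crossing condition incompatible with nearby $\C$-trajectories), \emph{and} $p^*_{a-}\in\Sl$. For $\M^+$ the last exclusion is free because the interior equilibrium is a source in the relevant range; for $\M^-$ it is the delicate point, since $M_0$ is a sink throughout $(p^{p,a}_-,p^a_\Delta)$, and one must again invoke the robustness of the basin of attraction of a hyperbolic sink to show that $\Sl$ is open at such $p$, contradicting $p^*_{a-}=\sup\C$. Your proposal does not address this, and your linearization summary (``stable node above $p^a_\Delta$, spiralling near $p^{p,a}_-$'') misses that $M_0$ is already attracting on the whole interval where $p^*_{a-}$ lives. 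Finally, the strict inequalities in \eqref{critical ordering M-} are obtained in the paper not by a barrier comparison but by observing that the invariant region enclosed by the heteroclinic $\Gamma_{p^*_{a-}}$ must contain periodic orbits crossing the concavity line twice, which Dulac forbids at both endpoints $p^{p,a}_-$ and $p^a_\Delta$; your sketched ``explicit comparison of the vector field'' would need to be developed into an argument of comparable force.
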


In the $\mathcal{M}^-$ case our result slightly improves the corresponding one of \cite{FQaihp}, for $a=0$, by showing that for $p$ near $p^a_\Delta$ only a slow decaying solution exists; cf.\ point (iv) of Theorem \ref{Th M-}. 

The proofs of the previous theorems rely entirely on a careful analysis of an autonomous quadratic dynamical system that we obtain after a suitable transformation, see Section \ref{section 2}.
It was used in \cite{BV} to study the classical semilinear Lane-Emden system. 
Once the correspondence between the radial solution of \eqref{P} and the orbits of the dynamical systems \eqref{DS+} and \eqref{DS-} is made (see Section \ref{section classification}), all existence and classification results of Theorems \ref{Th M+} and \ref{Th M-} are derived by studying the stationary points and the flow lines of these systems. In particular, the uniqueness of the critical exponent and the behavior of the solutions, by varying the exponent $p$, are obtained as a direct consequence of the properties of the vector fields which define the dynamical systems.

Note that these systems are derived from the Pucci fully nonlinear equations and are piecewise $C^1$. This, in particular, allows the presence of several periodic orbits which produce regular and singular solutions with different features like pseudo-slow decay or pseudo--blowing up behavior at infinity or at the origin.

One reason why our approach is quite simple is that the most relevant sets which determine the flow generated by \eqref{DS+} and \eqref{DS-} are just straight lines; see Figures \ref{Fig flow},\ref{Fig flow2}.
Moreover, the presence of the weight $|x|^a$ in \eqref{P} does not produce additional difficulties, while it could be complicated via the method of \cite{FQaihp}. 
We stress that the usual change of variables which transforms Henon problems into non weighted ones  for $\lambda=\Lambda$ (see for instance \cite{BV, DjairoCM, GGN}) does not seem to work for Pucci's operators when $\lambda\neq \Lambda$ in order to achieve classification results.
Finally we point out that our proofs do not involve any energy function.

On the other hand, by the same analysis of the dynamics induced by \eqref{DS+} and \eqref{DS-} we also get the classification of singular solutions of \eqref{P} in a punctured ball or in $\rN\setminus\{0\}$.
Before stating the results we present some definitions. 

\begin{defin}\label{def singular}
Let $u$ be a radial solution of \eqref{P} and \eqref{H singular}, with either $\Omega=\rN\setminus\{0\}$ or $\Omega=B_R\setminus\{0\}$ for some $R>0$. Then the singular solution $u$ is said to be:
\vspace{-0.1cm}
\begin{enumerate}[(i)]
	\item \textit{$(\tilde{N}-2)$--blowing} up if there exists $c>0$ such that $\lim_{r\to 0} r^{\tilde{N}-2} u(r) = c$, where $\tilde{N}$ is either $\tilde{N}_+$ if the operator is $\mathcal{M}^+$ or $\tilde{N}_-$ for $\mathcal{M}^-$, see \eqref{def dimensional-like};\vspace{-0.1cm}
		
	\item \textit{$\alpha$--blowing up} if there exists $c>0$ such that $\lim_{r\to 0} r^\alpha u(r) = c$, with $\alpha$ as in \eqref{critical exponents a};\vspace{-0.1cm}
		
	\item \textit{pseudo--blowing up} if there exist constants $0<c_1<c_2$ such that \vspace{-0.25cm}
		\begin{center}
			$c_1 = \liminf_{r\to 0} r^\alpha u(r) < \limsup_{r\to 0} r^\alpha u(r) = c_2$.
		\end{center}		
\end{enumerate}
\end{defin} 

We highlight that Definition \ref{def singular} (iii) corresponds to a type of solutions which change concavity infinitely many times in a neighborhood of zero. The existence of such a type of singular solutions was already detected for a more general class of uniformly elliptic equations, for values of the exponent $p$ close to the critical one, see \cite[Section 6]{FQind}.

\begin{rmk}\label{remark trivial singular}
For all $p>p^{s,a}_+$ in the case of $\M^+$, resp.\ $p>p^{s,a}_-$ for $\M^-$, the function $u_p(r)=C_p\,r^{-\alpha}$, $C_p$ as in \eqref{omega M0}, is a singular solution of \eqref{P} in $\rN\setminus\{0\}$. We call it \textit{trivial singular solution}.
\end{rmk}

\begin{theorem}[$\M^+$ singular solutions]\label{Th M+ singular} 
Assuming $\tilde{N}_+>2$ and $\lambda<\Lambda$, for \eqref{P}--\eqref{H singular} it holds:\vspace{-0.1cm}
\begin{enumerate}[(i)]
	\item for any $p\leq p^{s,a}_+$ there is no singular radial solution in $\rN\setminus\{0\}$, while for each $R>0$ there are infinitely many $(\tilde{N}_+-2)$--blowing up radial solutions of \eqref{P}--\eqref{H Dirichlet} in $B_R\setminus\{0\}$;
		
	\item  if $p^{s,a}_+ \leq  p_\Delta^a$ then for any $p\in (p^{s,a}_+,p^a_\Delta ]$ there is a unique $\alpha$--blowing up radial solution in $\rN\setminus\{0\}$ with fast decay at $+\infty$. Also, for any $R>0$ there are infinitely many $\alpha$--blowing up radial solutions of \eqref{P}--\eqref{H Dirichlet} in $B_R\setminus\{0\}$;
		
	\item for each $p\in (\,p_\Delta^a+,p^*_{a+})$ there exists a unique singular radial solution in $\rN\setminus\{0\}$ with fast decay at $+\infty$. Moreover, for any $R>0$ there exist infinitely many singular radial solutions of the Dirichlet problem \eqref{P}--\eqref{H Dirichlet} in $B_R\setminus\{0\}$; 
	
\item if $p=p^*_{a+}$ there exist infinitely many pseudo--blowing up radial solutions in $\rN\setminus\{0\}$ with pseudo-slow decay at $+\infty$, and infinitely many $\alpha$--blowing up in $\rN\setminus\{0\}$  with pseudo-slow decay at $+\infty$. Also, there is no singular radial solution of  \eqref{P}--\eqref{H Dirichlet} in $B_R\setminus\{0\}$;
	
	\item if $p\in (p^*_{a+},p^{p,a}_+)$ there are infinitely many $\alpha$--blowing up radial solutions in $\rN\setminus\{0\}$ with pseudo-slow decay at $+\infty$, and there is a pseudo--blowing up radial solution with pseudo-slow decay at $+\infty$. Further, there is no singular radial solution of  \eqref{P}--\eqref{H Dirichlet} in $B_R\setminus\{0\}$;
		
	\item if $p\in [\, p^{p,a}_+,+\infty)$ there are no nontrivial singular radial solutions, cf.\ Remark \ref{remark trivial singular}.
	\end{enumerate}
Here, uniqueness in $\rN\setminus\{0\}$ is meant up to scaling.
\end{theorem}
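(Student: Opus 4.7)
The strategy is to read off the classification directly from the phase portrait of the piecewise $C^1$ autonomous quadratic system \eqref{DS+} of Section \ref{section 2}, with time variable $t=\log r$. In these coordinates the trivial singular profile $C_p r^{-\alpha}$ from Remark \ref{remark trivial singular} corresponds to a stationary point $M_0$, which enters the admissible region precisely at $p=p^{s,a}_+$, while the $(\tilde{N}_+-2)$-scaling corresponds to a stationary point $N_\infty$ on the invariant axis $\{Y=0\}$. The three singular behaviors of Definition \ref{def singular} translate, as $t\to -\infty$, into convergence to $N_\infty$, convergence to $M_0$, or accumulation on a periodic orbit; symmetrically, decay at $+\infty$ is read off as the forward limit being $N_\infty$ (fast decay), $M_0$ (slow decay), or a periodic orbit (pseudo-slow decay). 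The whole-space problem in $\rN\setminus\{0\}$ thus amounts to matching heteroclinic or periodic connections between these invariant sets, whereas the Dirichlet problem in $B_R\setminus\{0\}$ amounts to finding backward orbits that reach $\{Y=0\}$ in finite forward time, with $R$ then fixed by the scaling invariance of \eqref{DS+}.

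The required dynamical information---the type and stability of $M_0$, the heteroclinic $M_0\leftrightarrow N_\infty$ pinning down $p^*_{a+}$, and the limit cycle $\Gamma$ surrounding $M_0$ for $p\in(p^*_{a+},p^{p,a}_+)$---is already available from the proof of Theorem \ref{Th M+}. One then proceeds case by case. In (i), $M_0$ is absent and only $N_\infty$ can act as a backward limit, giving $(\tilde{N}_+-2)$--blowing up solutions in balls via a one-parameter shooting family (the free parameter absorbed by scaling to any $R$), while forward orbits escape the admissible region and rule out global singular solutions. In (ii)--(iii), $M_0$ is stable forward (node, resp.\ spiral), hence unstable backward; its backward unstable manifold produces a continuum of $\alpha$--blowing up orbits, exactly one of which connects forward to $N_\infty$ (the unique fast-decaying global singular solution), the others hitting $\{Y=0\}$ transversally. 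At $p=p^*_{a+}$ (case (iv)), the heteroclinic is replaced by the bifurcating limit cycle $\Gamma$: the backward unstable manifold of $M_0$ spirals onto $\Gamma$ forward (pseudo-slow decay), while a further infinite family accumulates on $\Gamma$ backward (pseudo--blowing up), with no orbit reaching $\{Y=0\}$. For (v), $M_0$ is unstable forward and matching directions produces both the $\alpha$--blowing up family with pseudo-slow decay and a single pseudo--blowing up solution. Finally in (vi), $\Gamma$ has disappeared across $p^{p,a}_+$ and the dynamics force every non-trivial orbit to exit the admissible region in finite time, leaving only the trivial $u_p$.

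\textbf{Main obstacle.} The most delicate step is the counting in cases (iv) and (v): producing \emph{infinitely many} pseudo--blowing up solutions in $\rN\setminus\{0\}$ and excluding spurious ones requires a return-map analysis on a transversal to $\Gamma$ to classify orbits whose $\alpha$-limit set is the cycle. Equally subtle is the uniqueness, up to scaling, of the fast-decaying singular solution in (ii)--(iii): this hinges on the one-dimensional strong unstable manifold of $M_0$ crossing a distinguished separatrix exactly once, a monotonicity property that is not automatic for a piecewise $C^1$ vector field and must be verified on each smoothness region using the structural analysis of Section \ref{section classification}.
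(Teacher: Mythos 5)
Your overall strategy --- reading off Definition \ref{def singular} from the $\alpha$- and $\omega$-limits of orbits of \eqref{DS+} --- is exactly the paper's, but the plan contains one concrete error and leaves as unresolved ``obstacles'' precisely the steps that the paper's version of this argument settles cleanly. The error is the stability you assign to $M_0$: for $\M^+$ the point $M_0$ is a \emph{source} on the whole interval $p^{s,a}_+<p<p^{p,a}_+$ (Proposition \ref{local study stationary points}(4)), so there is no change of stability at $p^*_{a+}$ as your cases (ii)--(iii) (``stable forward'') versus (v) (``unstable forward'') suggest. This matters: orbits in the basin of a forward-stable $M_0$ have $\omega$-limit $M_0$ and would be \emph{slow decaying at infinity}, not $\alpha$--blowing up at the origin; the $\alpha$--blowing up families in (ii)--(v) are produced by the orbits \emph{issued from} the source $M_0$, i.e.\ those with $\alpha$-limit equal to $M_0$. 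Relatedly, your proposed mechanism for uniqueness of the fast-decaying singular solution (a separatrix-crossing monotonicity for a manifold of $M_0$, flagged as a main obstacle) is not needed: since $A_0$ is a hyperbolic saddle for $p>p^{s,a}_+$, there is a \emph{unique} orbit $\Upsilon_p$ with $\omega(\Upsilon_p)=A_0$ (Proposition \ref{local uniqueness}); it is trapped backward in the bounded region cut out by $\Gamma_p$, the axes and the line $X=\tilde{N}_+-2$, so Poincar\'e--Bendixson plus Dulac (no cycles for $p\le p^a_\Delta$) force $\alpha(\Upsilon_p)=M_0$. Uniqueness is automatic from the saddle structure of $A_0$, not from $M_0$.

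Two further gaps. First, the count of infinitely many pseudo--blowing up solutions at $p=p^*_{a+}$ does not require a return map: at $p^*_{a+}$ the heteroclinic $\Gamma_{p^*_{a+}}=\Upsilon_{p^*_{a+}}$ together with the axes bounds an \emph{invariant} region $D$ containing the source $M_0$ and a minimal cycle $\theta$; if there were a maximal periodic orbit $\theta_0$ strictly inside $D$, the invariant annulus between $\theta_0$ and $\partial D$ would contain no stationary point and no further cycle, contradicting Poincar\'e--Bendixson for orbits issued from $\theta_0$; hence infinitely many cycles accumulate at $\partial D$, and each cycle crossing $\ell_+$ twice \emph{is} a pseudo--blowing up, pseudo-slow decaying solution. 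Second, your case (vi) (``every non-trivial orbit exits the admissible region in finite time'') is false as stated: for $p>p^{p,a}_+$ the orbits attracted by the sink $M_0$ stay bounded, and at $p=p^{p,a}_+$ the center $M_0$ is surrounded by genuine periodic orbits; the correct point is that those cycles lie entirely in $R^-_\lambda$ and therefore do not correspond to $C^2$ solutions (Remark \ref{remark singular periodic}), while no orbit is issued from a sink or a center and the unstable manifold of $A_0$ is the $X$-axis. Finally, your case (i) includes $p=p^{s,a}_+$, where $A_0=M_0$ is not hyperbolic and the existence of infinitely many orbits issued from it requires the separate non-hyperbolic analysis of Proposition \ref{p=p^s}.
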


\begin{theorem}[$\M^-$ singular solutions]\label{Th singular- Introd} If $\lambda<\Lambda$, for the problem \eqref{P}--\eqref{H singular} we have:
\begin{enumerate}[(i)]
	\item if $p\leq p^{s,a}_-$ there is no singular radial solution in the whole $\rN\setminus\{0\}$, while for any $R>0$ there are infinitely many $(\tilde{N}_- -2)$--blowing up radial solutions of \eqref{P}--\eqref{H Dirichlet} in $B_R\setminus\{0\}$;
		
	\item for each $p\in (p^{s,a}_-,p^{p,a}_-)$ there exists a unique $\alpha$--blowing up radial solution in $\rN\setminus\{0\}$ with fast decay at $+\infty$. Further, for any $R>0$ there exist infinitely many $\alpha$--blowing up radial solutions of the Dirichlet problem \eqref{P}--\eqref{H Dirichlet} in $B_R\setminus\{0\}$;
		
	\item for any $p\in (p^{p,a}_-,p^*_{a-})$ there are infinitely many pseudo--blowing up radial solutions in $\rN\setminus\{0\}$. Among them there is a unique fast decaying, a pseudo-slow decaying, and infinitely many with slow decay at $+\infty$. Moreover, for each $R>0$ there exist infinitely many pseudo--blowing up radial solutions of \eqref{P}--\eqref{H Dirichlet} in $B_R\setminus\{0\}$;
	
	\item if $p=p^*_{a-}$ there exist infinitely many pseudo--blowing up radial solutions in $\rN\setminus\{0\}$. Among them there are infinitely many with slow-decay at $+\infty$, and infinitely many pseudo-slow decaying at $+\infty$. Further, there is no singular radial solution of  \eqref{P}--\eqref{H Dirichlet} in $B_R\setminus\{0\}$;
		
	\item there exists $\varepsilon>0$ such that for $p \in [\, p^a_\Delta-\varepsilon ,+\infty)$ no nontrivial singular radial solution exists.
	\end{enumerate}
Here, uniqueness in $\rN\setminus\{0\}$ is meant up to scaling.
\end{theorem}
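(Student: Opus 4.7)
The strategy is to use the correspondence between radial solutions of \eqref{P}--\eqref{H singular} and trajectories of the autonomous quadratic dynamical system \eqref{DS-} developed in Section \ref{section classification}. With the change of independent variable $t = \log r$, the singular limit $r \to 0^+$ becomes the $\alpha$-limit $t \to -\infty$, so singular radial solutions are encoded by trajectories whose $\alpha$-limit set lies at one of the stationary points that carry a blowing-up scaling: namely $N$ (giving $(\tilde{N}_- - 2)$-blowing up), $M_0$ (giving $\alpha$-blowing up, or via spiraling, pseudo-blowing up), or at a periodic orbit around $M_0$. The classification then reduces, case by case, to determining which trajectories emanate from these invariant sets and where they land at $t \to +\infty$.

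The first step is to record the linearization of \eqref{DS-} at the stationary points as $p$ varies, reusing the computation already carried out for Theorem \ref{Th M-}. Elementary calculations identify $p^{s,a}_-$ and $p^{p,a}_-$ as bifurcation values: as $p$ crosses $p^{s,a}_-$ from below, $M_0$ enters the admissible positive region and $N$ loses its $\alpha$-limit repeller character, while $p^{p,a}_-$ marks the transition of $M_0$ from a saddle (with a one-dimensional unstable manifold producing $\alpha$-blowing up orbits) to an unstable focus whose two-dimensional unstable set produces pseudo-blowing up orbits. At $p = p^*_{a-}$ and at $p^a_\Delta - \varepsilon$ the global phase portrait changes further, in exactly the way exploited for regular solutions in Theorem \ref{Th M-}.

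With this in hand I would treat each case separately. For (i), $p \leq p^{s,a}_-$: no stationary point in the admissible positive region can launch a blowing-up trajectory on all of $\rN\setminus\{0\}$, yet the unstable manifold of $N$ can be truncated at any finite $t = \log R$, yielding, after scaling in $r$, infinitely many Dirichlet solutions in $B_R \setminus \{0\}$. For (ii), $p \in (p^{s,a}_-, p^{p,a}_-)$: the saddle $M_0$ has a one-dimensional unstable manifold that is forced, by the global flow portrait, to converge to the fast-decay stationary point at $+\infty$, giving the unique $\alpha$-blowing up, fast decaying solution. For (iii), $p \in (p^{p,a}_-, p^*_{a-})$: $M_0$ is an unstable focus carrying a two-parameter family of pseudo-blowing up orbits; running them forward in $t$ and using a Poincaré first-return map along a transverse section, one singles out a unique fast-decaying orbit, a unique pseudo-slow decaying orbit, and a continuum of slow-decaying orbits. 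Case (iv) $p = p^*_{a-}$ is analogous at $t \to -\infty$, with the forward asymptotics read off from the critical portrait established for Theorem \ref{Th M-}. Case (v) follows because for $p \geq p^a_\Delta - \varepsilon$ the attracting structure around $M_0$ degenerates, leaving only the trivial singular solution $C_p\, r^{-\alpha}$ from Remark \ref{remark trivial singular}.

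The main obstacle is case (iii): singling out, inside the two-parameter family of pseudo-blowing up trajectories, the unique one with fast decay and the unique one with pseudo-slow decay. This requires a monotonicity/transversality argument for a first-return map built from the spiral motion near $M_0$, showing that the invariant curve separating fast-decaying from slow-decaying forward asymptotics is crossed exactly once. A secondary technical point is the nonexistence of singular Dirichlet solutions in the ball for $p \geq p^*_{a-}$ in cases (iv)--(v): one must verify that no orbit emerging from the focus or from a periodic orbit around $M_0$ can reach the axis $\{u = 0\}$ in finite $t$, which should follow from the existence of an invariant trapping region bounded by the stable and unstable manifolds of the remaining stationary points.
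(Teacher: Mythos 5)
Your overall strategy---translating singular behaviour at $r\to 0$ into $\alpha$-limits of trajectories of \eqref{DS-} and classifying case by case---is the right one and is what the paper does. However, the phase portrait you build on is wrong at the decisive point, and the error would derail cases (ii)--(iv). By Proposition \ref{local study stationary points}(4), for $\M^-$ the point $M_0$ is a \emph{source} on $(p^{s,a}_-,p^{p,a}_-)$ and a \emph{sink} for $p>p^{p,a}_-$; it is never a saddle in $1Q$, and past $p^{p,a}_-$ it is not an unstable focus. In case (ii) the unique fast-decaying $\alpha$--blowing up solution is therefore not ``the one-dimensional unstable manifold of a saddle $M_0$'': it is the unique stable trajectory $\Upsilon_p$ of the saddle $A_0$ (Proposition \ref{local uniqueness}), whose $\alpha$-limit must be $M_0$ because Dulac's criterion (Proposition \ref{Dulac}) excludes periodic orbits for $p<p^{p,a}_-$; the infinitely many Dirichlet solutions come from the other orbits issued from the source $M_0$ which blow up forward in finite time. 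In case (iii) the error is fatal: since $M_0$ is a sink, \emph{no} orbit comes out of it, so your ``two-parameter family of pseudo-blowing up orbits emanating from an unstable focus'' does not exist; moreover, orbits spiralling out of a focus at $M_0$ would be $\alpha$--blowing up (with $\lim_{r\to 0}r^{\alpha}u=C_p$), not pseudo--blowing up, so even with your stability assignment you would not obtain the asserted $\liminf<\limsup$ oscillation. The paper's mechanism is the opposite: for $p\in(p^{p,a}_-,p^*_{a-})$ one has $p\in\C$, so $\Gamma_p$ blows up forward and, together with the line $X=\tilde{N}_--2$ and the axes, traps a bounded region containing the sink $M_0$; Poincar\'e--Bendixson then forces a periodic orbit $\theta_p$ around $M_0$ with $\alpha(\Upsilon_p)=\theta_p$, and it is the periodic orbits, acting as $\alpha$-limits, that produce all the pseudo--blowing up solutions: the fast decaying one along $\Upsilon_p$, the slow decaying ones inside the minimal cycle falling into the sink, the pseudo-slow decaying ones on the cycles themselves, and the Dirichlet ones issued from $\theta_p$ that blow up forward.

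Two further slips. First, the $(\tilde{N}_--2)$--blowing up solutions in (i) correspond to orbits issued from $A_0=(\tilde{N}_--2,0)$, which is a source for $p<p^{s,a}_-$, not from the ``unstable manifold of $N$'': the point $N_0$ is the $\alpha$-limit of \emph{regular} solutions, and its unstable manifold is the single trajectory $\Gamma_p$. Second, a Dirichlet solution in $B_R\setminus\{0\}$ is not obtained by ``truncating'' an orbit at $t=\log R$; the boundary condition $u(R)=0$ is encoded by the orbit actually blowing up, $X\to+\infty$ and $Z\to 0$, at the finite time $T=\log R$ (Proposition \ref{types of blow ups}), and it is the finite-time blow-up of all orbits leaving the trapping region through $X=\tilde{N}_--2$ that yields the infinitely many singular Dirichlet solutions.
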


Our results on singular solutions are obtained by complementing the analysis of the flow lines of the dynamical systems \eqref{DS+} and \eqref{DS-}. To the best of our knowledge they are the first global classification results on singular solutions found  for this class of fully nonlinear equations. In \cite[Remark 3.2]{FQaihp} it is pointed out that, in the case $a=0$, periodic orbits of the Endem-Fowler system would produce singular solutions, while in \cite[Theorem 6.3]{FQind} the existence of singular solutions is proved near the critical exponent.

For the critical exponents $p^*_{a\pm}$, our dynamical systems \eqref{DS+} and \eqref{DS-} furnish infinitely many periodic orbits. On the other hand, for $p^{p,a}_\pm$ infinitely many periodic orbits appear which do not correspond to $C^2$ solutions for $r>0$, see Remark~\ref{remark singular periodic}.
For $p\in (p^*_{a-}, p_\Delta^a-\varepsilon)$ the existence of singular solutions cannot be deduced directly from the dynamical system approach.

Let us underline the fact that obtaining periodic orbits is in general a very difficult task in the theory of dynamical systems.
Even in the very particular case of a polynomial autonomous system this question is not completely understood, see \cite{ChiconeTian, Hale}.

Finally, as a byproduct of the study of regular radial solutions of \eqref{P}, either in $\rN$ or in a ball, we easily get the range of the exponents $p$ for which a positive radial solution of the Dirichlet problem in the exterior of a ball does not exist. Indeed, we get the following result. 

\begin{theorem}\label{Th exterior ball+-F Introd}
Let $p>1$. Then there are no radial solutions of
\begin{align}\label{ProbExterior} 	\mathcal{M}^\pm (D^2 u)+|x|^a u^p =0  , \quad	u> 0 \;\; \textrm{ in }  \;\rN\setminus B_R ,\quad	u= 0 \;\; \textrm{ on }  \;\partial B_R \end{align}
if $p\leq p^*_{a\pm}$\,  for each $R>0$.	
\end{theorem}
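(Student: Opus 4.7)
The plan is to proceed by contradiction using the dynamical-system framework developed in Sections~\ref{section 2} and \ref{section classification} for the analysis of regular radial solutions. Suppose $u$ solves \eqref{ProbExterior} for some $p\le p^*_{a\pm}$ and some $R>0$. From $u(R)=0$ and $u>0$ on $(R,\infty)$ one gets $u'(R)>0$ by the standard ODE boundary argument (Hopf-type). Under the Emden--Fowler type change of variables that yields \eqref{DS+} and \eqref{DS-}, where $t=\ln r$ and the rescaled variable $X$ is proportional to $r^{\alpha}u$, the solution $u$ corresponds to a trajectory $\gamma(t)$ of the relevant dynamical system, defined for $t\ge t_0:=\ln R$, starting at $t_0$ on the axis $\{X=0\}$ and entering the half-plane $\{X>0\}$ with positive velocity dictated by $u'(R)>0$.

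The next step is to trace $\gamma$ forward in $t$. Since $u>0$ on $(R,\infty)$, one must have $\gamma(t)\in\{X>0\}$ for every $t>t_0$, and the trajectory cannot escape to infinity in finite time because $u$ is a classical $C^2$ solution on $r>R$. Thus only two asymptotic scenarios remain: either $\gamma(t)$ returns to the axis $\{X=0\}$ at some finite $t_1>t_0$, which forces $u(e^{t_1})=0$ and contradicts $u>0$ on $(R,\infty)$; or $\gamma(t)$ stays in $\{X>0\}$ and approaches, as $t\to+\infty$, either a decay-type stationary point (slow or fast decay) or a periodic orbit (pseudo-slow decay).

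The core of the argument is to exclude this second scenario for $p\le p^*_{a\pm}$, relying on the flow-line picture already established in the proofs of Theorems~\ref{Th M+} and \ref{Th M-}: in the range $p\le p^*_{a\pm}$, the stable sets of the decay fixed points and of the only relevant limit cycles are formed exclusively by orbits emanating, as $t\to-\infty$, from the stationary point corresponding to regular behaviour at $r=0$. Since $\gamma$ emanates instead from the axis $\{X=0\}$ at the \emph{finite} time $t_0$, it cannot lie on any such stable set, so the second alternative is ruled out and the first forces the contradiction. The main obstacle is precisely to make this last claim rigorous: one has to verify, case by case in the $p$-range considered, that no orbit of \eqref{DS+}/\eqref{DS-} issuing from $\{X=0,\ Y>0\}$ can converge to a decay stationary point or to a periodic orbit. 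This is a two-dimensional topological statement that uses the invariant straight lines highlighted in Figures~\ref{Fig flow},\ref{Fig flow2}, the piecewise $C^1$ structure of the vector fields, and the monotonicity properties already exploited for the regular-solution theorems---which is why, once those theorems are established, the present result is indeed an ``easy byproduct''.
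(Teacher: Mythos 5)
There is a genuine gap, and also a setup error. First, the setup: the variables that produce \eqref{DS+}--\eqref{DS-} are \eqref{X,Z a}, i.e.\ $X=-ru'/u$ and $Z=-r^{1+a}u^p/u'$, not ``$X$ proportional to $r^\alpha u$''. In these variables the trajectory of an exterior-domain solution does \emph{not} start on the axis $\{X=0\}$ at $t_0=\ln R$: at $r=R$ one has $u=0$, $u'>0$, so the orbit begins in the third quadrant, and it only enters $1Q$ at the radius $\mu_\delta$ where $u'=0$, through the blow-up $Z(t)\to+\infty$, $X(t)\to 0$ as $t\to t_\delta^+=\ln(\mu_\delta)^+$ (this is Proposition \ref{prop exterior}). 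So the relevant object is a $1Q$-trajectory $\varXi_p$ defined on $(t_\delta,+\infty)$ that blows up \emph{backward} in finite time with $Z\to+\infty$.

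Second, and more seriously, your ``core of the argument'' is asserted rather than proved, and the mechanism you invoke is false as stated. It is not true in the range $p\le p^*_{a\pm}$ that the stable sets of the decay equilibria consist exclusively of orbits emanating from the regular point $N_0$: for $p\in(p^{s,a}_\pm,p^*_{a\pm})$ the unique orbit $\Upsilon_p$ arriving at $A_0$ emanates from $M_0$ or from a periodic orbit (it is a singular solution; see Theorem \ref{Th M+ singular}(ii)--(iii)), not from $N_0$. What actually closes the argument in the paper is a barrier (non-crossing) argument with the regular trajectory $\Gamma_p$: since $p\le p^*_{a\pm}$ means $p\in\C$ or $p=p^*_{a\pm}\in\F$, the curve $\Gamma_p$ together with the $X$ and $Z$ axes (and, in the $\C$ case, the line $L=\{X=\tilde N_\pm-2\}$) encloses a region whose closure contains every admissible $\omega$-limit of a forward-bounded $1Q$-orbit ($A_0$, $M_0$, or a periodic orbit around $M_0$, by Proposition \ref{AP bounds} and Poincar\'e--Bendixson). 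The orbit $\varXi_p$ starts outside this region (it comes from $Z=+\infty$), so to converge to any of these limits it would have to cross $\Gamma_p$, which is impossible by uniqueness for the ODE system. This is the step your proposal leaves open; without it the proof does not go through.
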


In the case of $a=0$, Theorem \ref{Th exterior ball+-F Introd} has been recently proved in \cite{GILexterior2019} with different arguments which rely both on the study of the second order ODE and on the analysis of the Emden-Fowler system. 
Their work presents a complete picture of existence and nonexistence of solutions for distinct intervals for the values of the parameter $p$.
However, through our arguments we get their nonexistence result by a considerably shorter proof.
Indeed, we will see in Sections \ref{section Pucci+} and \ref{section Pucci-} that the result of Theorem \ref{Th exterior ball+-F Introd} becomes a straightforward consequence of the characterization of the critical exponents $p^*_{a\pm}$ in terms of the associated quadratic system we consider.
Let us point out that in \cite{GILexterior2019} also the existence and classification of the solutions of \eqref{ProbExterior} are provided when $a=0$. Alternatively, this could be done through our methods. Since this is not the main goal of our research we just refer to Section \ref{section Pucci+} for further comments.

To conclude, we stress that another advantage of our approach is that it treats in a unified way several kind of solutions of \eqref{P}. We refer the reader to Figures \ref{Fig p<ps}--\ref{Fig Psingular M-} where, for a given value of the exponent $p$, all the orbits of the system corresponding to different type of solutions of \eqref{P} are displayed simultaneously.

\smallskip

The paper is organized as follows. In Section \ref{section 2} we write down the quadratic system associated to the problem \eqref{P} and study its intrinsic flow properties. In Section~\ref{section classification} we classify the different solutions of \eqref{P} in terms of orbits of the corresponding dynamical systems. Finally, Sections \ref{section Pucci+} and \ref{section Pucci-} are devoted to the proofs of the main results for the Pucci $\mathcal{M}^+$ and $\mathcal{M}^-$ operators, respectively. In the Appendix we provide some details about the stationary points of the dynamical systems, for the reader convenience.

\section{The associated dynamical system}\label{section 2}

In this section we define some new variables which allow to transform the radial fully nonlinear equations into a quadratic dynamical system.
 
We start by recalling that the Pucci's extremal operators $\mathcal{M}^\pm_{\lambda,\Lambda}$, for $0<\lambda\leq \Lambda$, are defined as
$$
\textstyle{\mathcal{M}^+_{\lambda,\Lambda}(X):=\sup_{\lambda I\leq A\leq \Lambda I} \mathrm{tr} (AX)\,,\quad \mathcal{M}^-_{\lambda,\Lambda}(X):=\inf_{\lambda I\leq A\leq \Lambda I} \mathrm{tr} (AX),}
$$
where $A,X$ are $N\times N$ symmetric matrices, and $I$ is the identity matrix.
Equivalently, if we denote by $\{e_i\}_{1\leq i \leq N}$ the eigenvalues of $X$, we can define the Pucci's operators as
\begin{align}\label{def Pucci}
\textstyle{\mathcal{M}_{\lambda,\Lambda}^+(X)=\Lambda \sum_{e_i>0} e_i +\lambda \sum_{e_i<0} e_i, \;\;\; \mathcal{M}_{\lambda,\Lambda}^-(X)=\lambda \sum_{e_i>0} e_i +\Lambda \sum_{e_i<0} e_i}. 
\end{align}
From now on we will drop writing the parameters $\lambda,\Lambda$ in the notations for the Pucci's operators.

In the case when $u$ is a radial function, with an abuse of notation we set $u(|x|)=u (r)$ for $r=|x|$. If in addition $u$ is $C^2$, then the eigenvalues of the Hessian matrix $D^2 u$ are $u^{\prime\prime}$ which is simple, and $\frac{u^\prime (r)}{r}$ with multiplicity $N-1$.  
We then define the Lipschitz functions
\begin{align}\label{m,M+}
m_+(s)=
\begin{cases}
\lambda s\; \textrm{ if } s\leq 0 \\
\Lambda s\; \textrm{ if } s> 0
\end{cases}\;
\textrm{and}\quad
M_+(s)=
\begin{cases}
s/\lambda\; \textrm{ if } s\leq 0 \\
s/ \Lambda\; \textrm{ if } s> 0;
\end{cases}
\end{align}
\vspace{-0.6cm}
\begin{align}\label{m,M-}
m_-(s)=
\begin{cases}
\Lambda s\; \textrm{ if } s\leq 0 \\
\lambda s\; \textrm{ if } s> 0
\end{cases}\;
\textrm{and}\quad
M_-(s)=
\begin{cases}
s/\Lambda\; \textrm{ if } s\leq 0 \\
s/ \lambda\; \textrm{ if } s> 0.
\end{cases}
\end{align}

The equations $\M^+ (D^2 u)+r^a u^p=0$ and $\M^- (D^2 u)+r^a u^p=0$, for $r\neq  0$, in radial coordinates for positive solutions then become, respectively,
\begin{align}\label{P radial m}\tag{$P_+$}
\textrm{$u^{\prime\prime}\;=\; M_+(-r^{-1}(N-1)\, m_+(u^\prime)- r^a u^p )$,\quad $u>0$};
\end{align}
\vspace{-0.8cm} 
\begin{align}\label{P radial m-}\tag{$P_-$}
\textrm{$u^{\prime\prime}\;=\; M_-(-r^{-1}(N-1)\, m_-(u^\prime)- r^a u^p )$,\quad $u>0$},
\end{align}
which are understood in the maximal interval where $u$ is positive.

\subsection{The new variables and the quadratic system}\label{section 2.1}

Let $u$ be a positive solution of \eqref{P radial m} or \eqref{P radial m-}. Thus we can define the new functions
\begin{align}\label{X,Z a}
X(t)=-\frac{ru^\prime(r)}{u(r)}\,,  \;\;\quad Z(t)=-\frac{r^{1+a}\, u^p(r)}{u^\prime(r)} \, \quad \textrm{ for }\,t=\mathrm{ln}(r),
\end{align}
whenever $r>0$ is such that $u(r)\neq  0$ and $u^\prime (r)\neq 0$.

We consider the phase plane $(X,Z)\in \real^2$. 
Since we are studying positive solutions, the points $(X(t),Z(t))$ belong to the first quadrant when $u^\prime<0$; or to the third quadrant when $u^\prime>0$.
We denote the first and third quadrants by $1Q$, $3Q$ respectively.

As a consequence of this monotonicity, the problems \eqref{P radial m} and \eqref{P radial m-} become in $1Q$:
\begin{align}\label{P M+ 1Q}
\textrm{for $\M^+$ :\qquad$u^{\prime\prime}\;=\; M_+(-\lambda r^{-1}(N-1)u^\prime- r^a u^p )$,\quad $u>0$\quad in $1Q$},
\end{align}
\vspace{-1cm}
\begin{align}\label{P M- 1Q}
\textrm{\;\;for $\M^-$ :\qquad$u^{\prime\prime}\;=\; M_-(-\Lambda r^{-1}(N-1)u^\prime- r^a u^p )$,\quad $u>0$\quad in $1Q$}.
\end{align}
On the other hand, since $u^\prime>0$ implies $u^{\prime\prime}<0$, one finds out in $3Q$:
\begin{align}\label{P M+ 3Q}
\textrm{for $\M^+$ :\qquad$\lambda u^{\prime\prime}\;=\; -\Lambda r^{-1}(N -1)u^\prime-  r^a u^p $,\quad $u>0$\quad in $3Q$},
\end{align}
\vspace{-1cm}
\begin{align}\label{P M- 3Q}
\textrm{for $\M^-$ :\qquad$\Lambda u^{\prime\prime}\;=\; -\lambda r^{-1}(N -1)u^\prime-  r^a u^p $,\quad $u>0$\quad in $3Q$}.
\end{align}

In terms of the functions \eqref{X,Z a}, we derive the following autonomous dynamical systems:
\begin{align}\label{DS+}
\textrm{in $1Q$, \quad}
\left\{
\begin{array}{ccl}
\dot{X} &=& \;X \; [\,X+1-M_+ (\lambda(N-1)-Z )\,], \\
\dot{Z} &=& \;Z\; [\,1+a-pX +M_+ (\lambda(N-1)-Z )\,];
\end{array}
\right.
\end{align}
\vspace{-0.7cm}
\begin{align}\label{DS+ 3Q}
\textrm{in $3Q$, \quad}
\dot{X} = \;X \; [\,X-(\tilde{N}_- -2)+{Z}/{\Lambda} \,] , \;\;
\dot{Z}= \;Z\; [\, \tilde{N}_- +a-pX - {Z}/{\Lambda}\,],
\end{align}
corresponding to \eqref{P M+ 1Q}, \eqref{P M+ 3Q} for $\M^+$, where the dot $\dot{ }$ stands for $\frac{\mathrm{d}}{\mathrm{d} t}$.
Similarly one has
\begin{align}\label{DS-}
\textrm{in $1Q$, \quad}
\left\{
\begin{array}{ccl}
\dot{X} &=& \;X \; [\,X+1-M_- (\Lambda(N-1)-Z )\,], \\
\dot{Z} &=& \;Z\; [\,1+a-pX +M_- (\Lambda(N-1)-Z )\,];
\end{array}
\right.
\end{align}
\vspace{-0.7cm}
\begin{align}\label{DS- 3Q}
\textrm{in $3Q$, \quad}
\dot{X} = \;X \; [\,X-(\tilde{N}_+ -2)+{Z}/{\lambda} \,] , \;\;
\dot{Z}= \;Z\; [\, \tilde{N}_+ +a-pX - {Z}/{\lambda}\,],
\end{align}
associated to \eqref{P M- 1Q}, \eqref{P M- 3Q} for $\M^-$.

We stress that \eqref{DS+}, \eqref{DS-} correspond to positive, decreasing solutions of \eqref{P radial m}, \eqref{P radial m-}. We will see in Section \ref{section classification} that this holds for regular and singular solutions of \eqref{P} in $\rN$ or in a ball.

\smallskip

On the other hand, given any trajectory $\tau=(X,Z)$ of \eqref{DS+}-\eqref{DS- 3Q} either in $1Q$ or $3Q$, we define
\begin{align}\label{def u via X,Z}
\textstyle{ u(r)=r^{-\alpha} (XZ)^{\frac{1}{p-1}}(t) }, \quad\textrm{ where }\, r=e^t.
\end{align}
Then we deduce
\begin{align*}
\textstyle{ u^\prime (r)= -\alpha r^{-\alpha-1} (XZ)^{\frac{1}{p-1}}(t) + \frac{r^{-\alpha}}{p-1} (XZ)^{\frac{1}{p-1}-1}(t) \,\frac{\dot{X} Z+ X\dot{Z} }{r} 	=  -X r^{-\alpha-1} (XZ)^{\frac{1}{p-1}}(t)=-\frac{X(t)u(r)}{r}},
\end{align*}
from which we recover \eqref{X,Z a}. Since $X\in C^1$, then $u\in C^2$. From this, one immediately sees that $u$ satisfies either \eqref{P radial m} or \eqref{P radial m-} from the respective equations for $\dot{X},\dot{Z}$ in the dynamical system.	

\smallskip

An important role in the study of our problem is played by the lines $\ell_\pm$, defined by
\begin{align}\label{ell}
\ell_+=\{(X,Z):\;Z=\lambda(N-1)\}\cap 1Q \;\;\;\textrm{ for \;}\M^+,
\\
\ell_-=\{(X,Z):\;Z=\Lambda(N-1)\}\cap 1Q \;\;\;\textrm{ for\; }\M^-.\nonumber
\end{align}
For each of the two systems \eqref{DS+} and \eqref{DS-} respectively, the lines $\ell_\pm$ splits $1Q$ into two regions, up and down: 
\begin{align}\label{R+}
R^+_\lambda=\{(X,Z):Z>\lambda (N-1)\}\cap 1Q, \quad
R^-_\lambda=\{(X,Z):Z<\lambda (N-1)\}\cap 1Q  \;\;\;\textrm{ for }\M^+,
\end{align}
\vspace{-0.6cm}
\begin{align}\label{R-}
R^+_\Lambda=\{(X,Z):Z>\Lambda (N-1)\}\cap 1Q, \quad
R^-_\Lambda=\{(X,Z):Z<\Lambda (N-1)\}\cap 1Q  \;\;\;\textrm{ for }\M^-.
\end{align}

\smallskip

In terms of $(P_\pm)$, $\ell_\pm$ is the line where a decreasing solution $u$ changes concavity in the sense that, when $(X(t),Z(t))\in R^+_\lambda$ (or $R^+_\Lambda$) then the corresponding solution $u$ through the transformation \eqref{def u via X,Z} is concave, while for $(X(t),Z(t))\in R^-_\lambda$ (or $R^-_\Lambda$), $u$ is convex. Hence, these regions are essential to determine the precise expressions of \eqref{P radial m} and \eqref{P radial m-} according to $M_+$ and $M_-$ in \eqref{m,M+}, \eqref{m,M-}.
For instance, when $u^{\prime\prime} \leq 0$ and $u^\prime<0$, \eqref{P radial m} becomes
\begin{center}
$-u^{\prime\prime} (r)-\frac{N-1}{r}u^\prime (r)=r^a \frac{u^p (r)}{\lambda}$,
\end{center}
for which the left hand side is the standard radial Laplacian operator; while for $u^{\prime\prime}> 0$ and $u^\prime<0$, \eqref{P radial m} reads as
\begin{center}
	$-u^{\prime\prime} (r)-\frac{\tilde{N}_+-1}{r}u^\prime (r)=r^a \frac{u^p (r)}{\Lambda}$,
\end{center}
where, in turn, the LHS is the Laplacian in the noninteger dimension $\tilde{N}_+$, see \eqref{def dimensional-like}.
Analogously one treats $\M^-$.
Note that in $3Q$ we always obtain Laplacian operators in dimensions $\tilde{N}_-$ (for $\M^+$) and $\tilde{N}_+$ (for $\M^-$), see \eqref{P M+ 3Q}, \eqref{P M- 3Q}. Our system is then the union of equations driven by different Laplacian-like operators. 
This explains the difficulty in dealing with fully nonlinear operators.

We stress that Lane-Emden-Henon problems for Laplacian operators were already studied in \cite{BV} in terms of the dynamical system \eqref{DS+} in the case  $\lambda=\Lambda=1$ subject to the transformation \eqref{X,Z a}.

\smallskip

At this stage it is worth observing that the systems \eqref{DS+} and \eqref{DS-} are continuous on $\ell_\pm$. More than that, the right hand sides are locally Lipschitz functions of $X,Z$, so the usual ODE theory applies. 
That is, one recovers existence, uniqueness, and continuity with respect to initial data as well as continuity with respect to the parameter $p$.

\subsection{Stationary points and local analysis}\label{section local}

We start this section investigating the sets where $\dot{X}=0$ and $\dot{Z}=0$. Let us focus our analysis on $1Q$, since the only stationary point on the boundary of $3Q$ is the origin.
One writes the dynamical systems \eqref{DS+} and \eqref{DS-} in terms of the following ODE first order autonomous equation
\begin{align}\label{Prob x=(X,Z)}
\dot{x}=F(x), \quad\textrm{where }\quad x=(X,Z), \quad F(x):=(f(x),g(x)). 
\end{align}
with $\dot{x}=(\dot{X},\dot{Z})$. For instance, in the case of the operator $\M^+$, then $f,g$ are given by
\begin{center}
$f(x)=
\begin{cases}
 X(X - ({N}-2) + \frac{Z}{\lambda}) \;\textrm{ in } R^+_\lambda\\
  X(X - (\tilde{N}_+-2) + \frac{Z}{\Lambda})\;\textrm{ in } R^-_\lambda
\end{cases}
,\;\;\;
g(x)=
\begin{cases}
 Z({N}+a - pX - \frac{Z}{\lambda}) \;\textrm{ in } R^+_\lambda\\
 Z(\tilde{N}_++a - pX - \frac{Z}{\Lambda})\;\textrm{ in } R^-_\lambda.
\end{cases}$
\end{center}

We first recall some standard definitions from the theory of dynamical systems.
\begin{defin}
A stationary point $Q$ of \eqref{Prob x=(X,Z)} is a zero of the vector field $F$.
If $\sigma_1$ and $\sigma_2$ are the eigenvalues of the Jacobian matrix $DF(Q)$, then $Q$ is hyperbolic if both $\sigma_1, \sigma_2$ have nonzero real parts. If this is the case, $Q$ is a \textit{source} if $\mathrm{Re}(\sigma_1),\mathrm{Re}(\sigma_2)>0$, and a \textit{sink} if $\mathrm{Re}(\sigma_1),\mathrm{Re}(\sigma_2)<0$; $Q$ is a \textit{saddle point} if $\mathrm{Re}(\sigma_1)<0<\mathrm{Re}(\sigma_2)$.
\end{defin}

Next we recall an important result from the theory of dynamical systems which describes the local stable and unstable manifolds near saddle points of the system \eqref{Prob x=(X,Z)}; see \cite[theorems 9.29, 9.35]{Hale}.
Here the usual theory for autonomous planar systems applies since each stationary point $Q$ possesses a neighborhood which is strictly contained in $R^+_\lambda$ or $R^-_\lambda$ where the vector field $F$ is $C^1$. 

\begin{prop}\label{prop GH}
Let $Q$ be a saddle point of \eqref{Prob x=(X,Z)}. 
Then the local stable (resp.\ unstable) manifold at $Q$ is locally a $C^1$ graph over the stable (resp.\ unstable) line of the linearized vector field.
In this case, if the linearized system has a stable line direction $L$, then there exists exactly two trajectories $\tau_1$ and $\tau_2$ arriving at $Q$ which admit the same tangent at the point $Q=\overline{\tau}_1\cap \overline{\tau}_2$ given by $L$.
Analogously there are only two trajectories coming out from $Q$ with the same property.
\end{prop}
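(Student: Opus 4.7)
The statement is a direct instance of the classical Stable Manifold Theorem (Hadamard--Perron) for planar autonomous systems, so the plan is essentially to reduce to that theorem and then unwind its geometric content. The only nontrivial point is handling the fact that our vector field $F$ is merely piecewise $C^1$ across the line $\ell_\pm$.

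The first step is to verify the assertion already made in the statement: each stationary point $Q$ admits a neighborhood strictly contained in $R^+_\lambda$ or $R^-_\lambda$. Using the explicit list of stationary points computed in the Appendix, one checks that every $Q$ has $Z$-coordinate strictly different from $\lambda(N-1)$ (resp.\ $\Lambda(N-1)$). Consequently there exists $\rho>0$ such that the ball $B_\rho(Q)$ lies in a single region on which $F$ agrees with one of the smooth polynomial vector fields written in \eqref{Prob x=(X,Z)}. On $B_\rho(Q)$ the field $F$ is therefore real-analytic, and the classical theory for $C^1$ planar systems applies verbatim.

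The second step is to apply the Stable Manifold Theorem to $F$ at $Q$: since $Q$ is hyperbolic with $\mathrm{Re}(\sigma_1)<0<\mathrm{Re}(\sigma_2)$, both eigenvalues are real (they are real because $\sigma_1\sigma_2<0$, so the discriminant of the characteristic polynomial is positive), and the linearized field has a one-dimensional stable line $E^s$ spanned by the eigenvector associated to $\sigma_1$ and an analogous unstable line $E^u$. The theorem then provides a local stable manifold $W^s_{\mathrm{loc}}(Q)\subset B_\rho(Q)$ which is a $C^1$ graph over $E^s$, tangent to $E^s$ at $Q$, invariant under the forward flow, and consisting of exactly the points whose forward orbit remains in $B_\rho(Q)$ and converges to $Q$; symmetrically one obtains $W^u_{\mathrm{loc}}(Q)$ tangent to $E^u$.

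The third step extracts the statement about trajectories. The set $W^s_{\mathrm{loc}}(Q)\setminus\{Q\}$ is a $C^1$ one-dimensional graph over $E^s\setminus\{0\}$, hence has exactly two connected components, one on each side of $Q$ along $E^s$. Each component is invariant under the forward flow and, being a single orbit of a planar autonomous system (no stationary point other than $Q$ inside $B_\rho(Q)$ for $\rho$ small), parametrizes a unique trajectory $\tau_i$ with $\tau_i(t)\to Q$ as $t\to+\infty$ and $\dot\tau_i(t)/\|\dot\tau_i(t)\|$ converging to a unit vector along $E^s$; thus $\overline\tau_1\cap\overline\tau_2=\{Q\}$ and both admit the common tangent direction $L=E^s$ there. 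The same argument with time reversed gives exactly two trajectories leaving $Q$ tangent to $E^u$.

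The main (and essentially only) obstacle is the lack of global smoothness of $F$; but as noted, this is handled at the outset by confining the analysis to a neighborhood of $Q$ on which $F$ is smooth. No energy or Lyapunov argument is needed, and no invocation beyond the classical references \cite{Hale} (Theorems 9.29 and 9.35 cited in the statement) is required.
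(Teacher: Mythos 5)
Your proposal is correct and follows essentially the same route as the paper, which states this proposition as a recalled classical result (citing Theorems 9.29 and 9.35 of \cite{Hale}) and justifies its applicability by the same observation you make: each stationary point lies strictly inside $R^+_\lambda$ or $R^-_\lambda$, so the piecewise-$C^1$ nature of $F$ is irrelevant locally and the standard Stable Manifold Theorem applies. Your additional details (reality of the eigenvalues from $\sigma_1\sigma_2<0$, and the two connected components of $W^s_{\mathrm{loc}}(Q)\setminus\{Q\}$ giving exactly two trajectories) are accurate fillings-in of what the citation covers.
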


We sometimes use the following notation to describe the limit of trajectories in the phase plane.
\begin{defin}[$\alpha$ and $\omega$ limits]\label{alpha and omega limits}
We call $\alpha$-limit of the orbit $\tau$, and we denote it by $\alpha(\tau)$, as the set of limit points of $\tau(t)$ as $t\to -\infty$.
Similarly one defines $\omega(\tau)$ i.e.\ the $\omega$-limit of $\tau$ at $+\infty$.
\end{defin}

We observe that both $X$ and $Z$ axes are invariant by the flow.
In particular, each quadrant is an invariant set for the dynamics.
Moreover, let us keep in mind the following segments in the plane $(X,Z)$. For the system \eqref{DS+}, we define
\begin{align}\label{ell 1+}
\ell_1^+ = \{\,(X,Z):\, Z = \Lambda (\tilde{N}_+-2)-\Lambda X\,\}\cap 1Q
\end{align}
which is the set where $\dot{X}=0$ and $X>0$; also
\begin{align}\label{ell 2+}
\ell_2^+ &=
{\ell_{2+}^+}\; \cup\;{\ell_{2-}^+}
\end{align}
with ${\ell_{2+}^+}= \{(X,Z): Z = \lambda ({N}+a-p X)\}\cap R^+_\lambda$,
${\ell_{2-}^+}= \{(X,Z): Z = \Lambda (\tilde{N}_++a- p X)\}\cap R^-_\lambda$,
which is the set where $\dot{Z}=0$ and $Z>0$; see Figures \ref{Fig flow} and \ref{Fig flow2}.

\smallskip

Notice that $\ell_1^+$ is a segment entirely contained in $R^-_\lambda$, since there are no other points in $1Q$ where $\dot{X}=0$ in the interior of the region $R^+_\lambda$.
Moreover, \eqref{ell 2+} is the union of two segments which join at the point $(\frac{1+a}{p},\lambda (N-1))\in \ell_+\cap\overline{\ell}_2^+$, see Figure \ref{Fig flow}.
The analogous sets for $\M^-$ are
\begin{align}\label{ell 1-}
\ell_1^- = \{\,(X,Z):\, Z = \lambda (\tilde{N}_--2)-\lambda X\,\}\cap 1Q 
\end{align}
which is the set where $\dot{X}=0$ and $X>0$ (contained in $R^-_\Lambda$); and
\begin{align}\label{ell 2-}
\ell_2^- =
{\ell_{2+}^-}\; \cup\;{\ell_{2-}^-}
\end{align}
with
${\ell_{2+}^-}= \{(X,Z): Z = \Lambda ({N}_-+a- p X)\}\cap R^+_\Lambda$\,, 
${\ell_{2-}^-}= \{(X,Z): Z = \lambda (\tilde{N}_-+a- p X)\}\cap R^-_\Lambda$,
which is the set where $\dot{Z}=0$ and $Z>0$.

\begin{lem}\label{stationary M+}
The stationary points of the dynamical systems \eqref{DS+}--\eqref{DS- 3Q} are:
	\begin{center}
for $\M^+$:\quad	$O=(0,0)$, \quad $N_0=(0,\lambda N+\lambda a)$, \quad $A_0=(\tilde{N}_+-2,0)$, \quad $M_0=(X_0,Z_0)$,
	\end{center}
where $X_0 = \alpha$, and $Z_0 =\Lambda(\tilde{N}_+-p\alpha+a) = \Lambda(\tilde{N}_+-2 - \alpha)$, see Figure \ref{Fig flow};
\begin{center}
for $\M^-$:\quad	$O=(0,0)$, \quad $N_0=(0,\Lambda N+\Lambda a)$, \quad $A_0=(\tilde{N}_--2,0)$, \quad $M_0=(X_0,Z_0)$,
\end{center}
where $X_0 = \alpha$ and $Z_0 = \lambda(\tilde{N}_--p\alpha+a) = \lambda(\tilde{N}_--2 - \alpha)$.
\end{lem}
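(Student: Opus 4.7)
The plan is to locate the common zeros of the vector field $F=(f,g)$ defining \eqref{DS+}--\eqref{DS- 3Q} by carefully unpacking the piecewise expressions of $M_\pm$ and sweeping through the possible factorizations of $\dot X=0$ and $\dot Z=0$. Since $3Q$ is already handled (the origin being the only boundary point where $F$ vanishes, and no interior zero occurring as the analogous computation produces $X=\alpha>0$, incompatible with $3Q$), the essential work is in $1Q$, split by the line $\ell_\pm$ into the two regions $R^\pm_\lambda$ (or $R^\pm_\Lambda$) where $F$ is polynomial.

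For $\M^+$, first I would rewrite \eqref{DS+} explicitly on each subregion using $M_+(\lambda(N-1)-Z)=(N-1)-Z/\lambda$ in $R^+_\lambda$ and $M_+(\lambda(N-1)-Z)=(\tilde N_+-1)-Z/\Lambda$ in $R^-_\lambda$ (the latter via $\tilde N_+=(\lambda/\Lambda)(N-1)+1$). This reproduces the formulas for $f,g$ given in the excerpt. Then I would go through four cases:
\begin{enumerate}[(a)]
\item $X=Z=0$ gives $O$.
\item $X=0,\ Z\neq 0$: the vanishing of $g$ forces $Z=\lambda(N+a)$ in $R^+_\lambda$ (consistent since $a>-1$ implies $\lambda(N+a)>\lambda(N-1)$), while the candidate $Z=\Lambda(\tilde N_++a)$ in $R^-_\lambda$ violates $Z<\lambda(N-1)$. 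This yields $N_0$.
\item $Z=0,\ X\neq 0$: only $R^-_\lambda$ is available, giving $X=\tilde N_+-2$, hence $A_0$ (using $\tilde N_+>2$).
\item $X\neq 0,\ Z\neq 0$: solving the linear system $X+Z/\Lambda=\tilde N_+-2$ and $Z/\Lambda=\tilde N_++a-pX$ in $R^-_\lambda$ yields $(1-p)X=-(2+a)$, hence $X_0=\alpha$ and $Z_0=\Lambda(\tilde N_++a-p\alpha)$; the identity $\tilde N_++a-p\alpha=\tilde N_+-2-\alpha$ follows from $(p-1)\alpha=2+a$. The corresponding candidate in $R^+_\lambda$ lands outside $R^+_\lambda$ by the same check as in (b). Finally, the consistency $Z_0<\lambda(N-1)=\Lambda(\tilde N_+-1)$ reduces to $\alpha>-1$, which holds. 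This yields $M_0$.
\end{enumerate}

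The $\M^-$ statement is then obtained by a completely parallel argument on \eqref{DS-}: one uses $M_-(\Lambda(N-1)-Z)=(N-1)-Z/\Lambda$ in $R^+_\Lambda$ and $=(\tilde N_--1)-Z/\lambda$ in $R^-_\Lambda$, runs the same four cases, and obtains $O$, $N_0=(0,\Lambda(N+a))$, $A_0=(\tilde N_--2,0)$, and $M_0=(\alpha,\lambda(\tilde N_-+a-p\alpha))$, with the last coordinate equivalently $\lambda(\tilde N_--2-\alpha)$ by the same algebraic identity.

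I do not expect any serious obstacle: the computation is purely algebraic and linear within each sub-region. The only point requiring care is the \emph{region-compatibility bookkeeping}, i.e.\ verifying for every candidate produced in a given sub-region that it indeed lies there; this is the step where one shows that exactly one of the two candidates in cases (b) and (d) survives, and it is also the step that implicitly uses $a>-1$ and $\tilde N_\pm>2$. Once this is done, I would gather the four stationary points in a short summary and refer to the Appendix for the linearization at each, which is not part of the current lemma.
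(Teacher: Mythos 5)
Your proposal is correct and follows essentially the same route as the paper's proof: factor $\dot X$ and $\dot Z$ in each of the regions $R^\pm_\lambda$ (resp.\ $R^\pm_\Lambda$), solve the resulting linear systems, and discard the candidates that fail the region-compatibility check (which is exactly where $a>-1$ enters, ruling out $(0,\Lambda(\tilde N_++a))$). The only differences are cosmetic — you spell out the $3Q$ and $R^+_\lambda$ exclusions that the paper dispatches in one line via the observation that $\ell_1^+$ does not meet $R^+_\lambda$.
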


\begin{proof} We just show the $\M^+$ case.
First notice that the system does not admit stationary points in $3Q$ nor on the line $\ell_+$. In the region $\overline{R}^+_\lambda$ we have already seen that $\dot{X}=0$ implies $X=0$, since $\ell_1^+$ does not intersect $R^+_\lambda$. By $\dot{Z}=0$ we obtain $Z=\lambda(N+a-pX)$ since $Z\neq 0$ in $R^+_\lambda$. Hence we reach the equilibrium point $N_0$.
In $\overline{R}^-_\lambda$, from $\dot{X}=0$ we have either $X=0$ or $Z=\Lambda(\tilde{N}_+-2-X)$, while by $\dot{Z}=0$ we deduce that either $Z=0$ or $Z=\Lambda(\tilde{N}_++a-pX)$.
Therefore we obtain the points $O$, $A_0$, $M_0$, and $(0,\Lambda (\tilde{N}_++a))$.
However, the latter does not belong to $\overline{R}^-_\lambda$ as long as $a>-1$. 
\end{proof}

Next we analyze the directions of the vector field $F$ in \eqref{Prob x=(X,Z)} on the $X,Z$ axes, on the concavity line
$\ell_\pm$, and also on $\ell_1^\pm$,  $\ell_2^\pm$; see \eqref{ell}, and \eqref{ell 1+}--\eqref{ell 2-}.

\begin{prop}\label{prop flow} The systems \eqref{DS+} and \eqref{DS-} enjoy the following properties (see Figures \ref{Fig flow},\ref{Fig flow2}):

\begin{enumerate}[(1)]
	\item Every trajectory of \eqref{DS+} in $1Q$ crosses the line $\ell_+$ transversely except at the point
	$P=(\frac{1+a}{p},\lambda (N-1))$. Moreover, it passes from 
	$R^+_\lambda$ to $R^-_\lambda$  if $X> \frac{1+a}{p}$, while it moves from $R^-_\lambda$ to $R^+_\lambda$ if $X< \frac{1+a}{p}$. The vector field at $P$ always points to the right.
	A similar statement holds for $\M^-$ via the system \eqref{DS-} considering respectively $\ell_-$, $(\frac{1+a}{p},\Lambda (N-1))$, $R^+_\Lambda$, $R^-_\Lambda$;
	
	\item The flow induced by \eqref{Prob x=(X,Z)} on the $X$ axis points to the left for $X\in (0, \tilde{N}_\pm-2)$, and to the right when $X> \tilde{N}_\pm-2$. On the $Z$ axis it moves up between $O$ and $N_0$, and down above $N_0$;
	
	\item The vector field $F$ on the line $\ell_1^\pm$ is parallel to the $Z$ axis whenever $X\neq \alpha$. It points up if $X< \alpha$, and down if $X>\alpha$. 
	Further, on the set $\ell_2^\pm$ the vector field $F$ is parallel to the $Z$ axis for $X\neq \alpha$. It moves to the right if $X< \alpha$, and to the left if $X>\alpha$.
\end{enumerate}

\end{prop}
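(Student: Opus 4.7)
All three assertions reduce to a direct evaluation of the vector field $F=(f,g)$ on the distinguished one--dimensional sets. The only subtle point is to keep track of which branch of the piecewise right--hand side applies, and to use the fact already noted in Section~\ref{section 2.1} that $F$ is continuous across $\ell_\pm$ (because $M_\pm(0)=0$).

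For (1), I would evaluate \eqref{DS+} on $\ell_+=\{Z=\lambda(N-1)\}$. There $M_+(\lambda(N-1)-Z)=0$, whence
\[
\dot X = X(X+1), \qquad \dot Z = Z\,(1+a-pX).
\]
In $1Q$ we have $X>0$, so $\dot X>0$; this yields transversal crossing away from the single point $P=((1+a)/p,\lambda(N-1))$ where $\dot Z$ vanishes and the field points strictly to the right. The sign of $\dot Z$ then identifies the direction of crossing exactly as stated, and the argument for $\M^-$ is identical with $\Lambda$ replacing $\lambda$.

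For (2), on $\{X=0\}$ trivially $\dot X=0$, so the flow is vertical. Splitting by region: on $R^-_\lambda\cap\{X=0\}$ one has $\dot Z=Z(\tilde N_++a-Z/\Lambda)$, which is positive for $0<Z<\lambda(N-1)$ since $\tilde N_+-\lambda(N-1)/\Lambda=1>-a$; on $R^+_\lambda\cap\{X=0\}$ one has $\dot Z=Z(N+a-Z/\lambda)$, positive strictly below $N_0$ and negative above. On $\{Z=0\}$, which lies in $R^-_\lambda$, $\dot Z=0$ and $\dot X=X(X-(\tilde N_+-2))$, giving the claimed sign change at $A_0$. Again the $\M^-$ case is analogous.

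For (3), $\dot X\equiv0$ on $\ell_1^+\subset R^-_\lambda$ by definition, so the field is vertical; substituting $Z/\Lambda=\tilde N_+-2-X$ into $\dot Z=Z(\tilde N_++a-pX-Z/\Lambda)$ collapses the expression to $\dot Z=Z(p-1)(\alpha-X)$, with the stated sign. On $\ell_2^+$ the computation is the genuinely interesting one: on the upper piece $\ell_{2+}^+\subset R^+_\lambda$ one substitutes $Z/\lambda=N+a-pX$ (after unfolding $M_+$) and obtains $\dot X=X(p-1)(\alpha-X)$; on the lower piece $\ell_{2-}^+\subset R^-_\lambda$ the different piecewise formula, combined with $Z/\Lambda=\tilde N_++a-pX$, yields the \emph{same} expression $\dot X=X(p-1)(\alpha-X)$. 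This consistency—forced by the particular calibration of the constants in \eqref{def dimensional-like}—is the only step that merits separate verification. The statements for $\M^-$ follow from the same algebra with $\lambda,\Lambda$ swapped and $\tilde N_+$ replaced by $\tilde N_-$.

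\textbf{Main obstacle.} No single step is deep; the proof is a bookkeeping exercise in the piecewise definitions of $M_\pm$. The one point worth highlighting is the compatibility on $\ell_2^\pm$: although the two branches of $F$ have different coefficients, the substitution of each branch's defining equation produces the common factor $(p-1)(\alpha-X)$, which is what makes the picture in Figures~\ref{Fig flow} and~\ref{Fig flow2} coherent across $\ell_\pm$.
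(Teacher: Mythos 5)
Your proposal is correct and follows essentially the same route as the paper's proof: a direct evaluation of the piecewise vector field on $\ell_\pm$, the axes, and $\ell_1^\pm$, $\ell_2^\pm$, arriving at the same identities $\dot X=X(X+1)$, $\dot Z=Z(1+a-pX)$ on $\ell_\pm$ and the common factor $(p-1)(\alpha-X)$ on $\ell_1^\pm$ and $\ell_2^\pm$. Your explicit branch-by-branch check on the two pieces of $\ell_2^\pm$ is a slightly more detailed write-up of the computation the paper compresses into one line, but it is the same argument.
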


\begin{proof}
\textit{(1)} We just observe that $\dot{X}=X(X+1)>0$, and $\dot{Z}=Z\,(1+a-pX)$ on $\ell_\pm$.

\textit{(2)} For instance consider $\M^+$. Since the $X$ axis is contained in $R^-_\lambda$, then $\dot{X}=X(X-(\tilde{N}_+-2))$ which is positive for $X<\tilde{N}_+-2$ and negative for $X>\tilde{N}_+-2.$
Now, $\dot{Z}= Z(N+a-{Z}/{\lambda})$ in $R^+_\lambda$ is positive if $Z<\lambda (N+a)$ and negative for $Z>\lambda (N+a)$.	On the other hand, $\dot{Z}= Z(\tilde{N}_++a-{Z}/{\Lambda})>0$ in $R^-_\lambda$, since $\Lambda (\tilde{N}_++a)>\lambda (N-1)$ for $a>-1$.

\textit{(3)} Notice that 
$\dot{Z}=(p-1)Z(\alpha-X)$ on $\ell_1^\pm$ and  $\dot{X} =(p-1)X(\alpha-X)$ on $\ell_2^\pm$.
Both are positive quantities for $X<\alpha$, and negative when $X> \alpha$.
\end{proof}

\begin{rmk}\label{remark concavity}
It is not difficult to see that an orbit can only reach the point $P$ in Proposition \ref{prop flow} (1) from $R^-_\lambda$. In fact by $\dot{X}>0$ and the inverse function theorem, $Z$ is a function of $X$ on $\ell_+$. Next, an analysis of the continuous function $X\mapsto\partial^2_X Z(X)$ at $P$ shows that an orbit passing through $P$ has a local maximum at this point; see the vector field at that point in Figures \ref{Fig flow} and \ref{Fig flow2}.
\end{rmk}

\begin{figure}[!htb]
	\centering
	\includegraphics[scale=0.6]{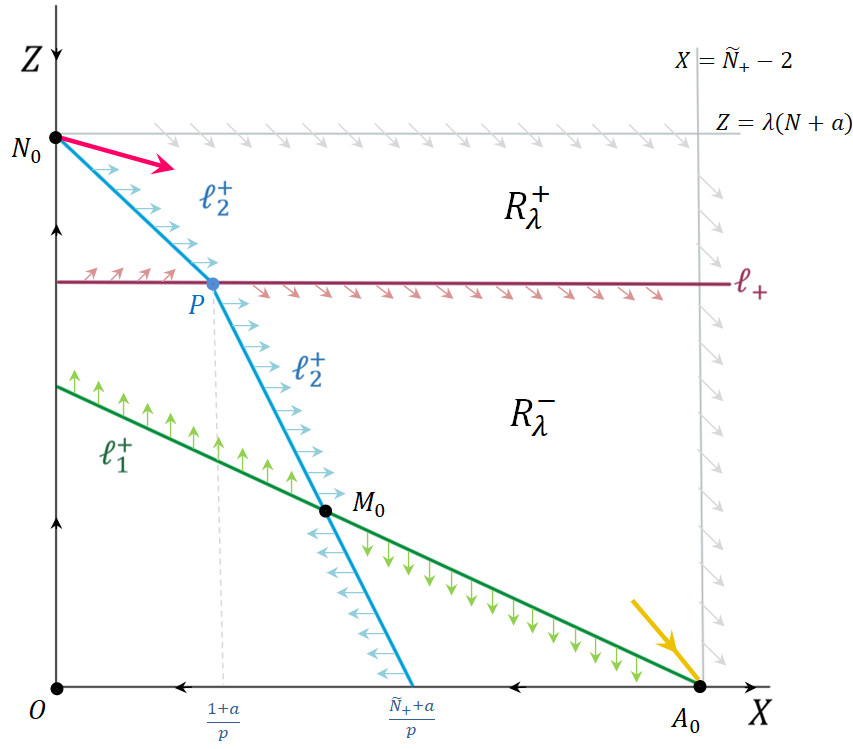}
	\caption{The flow behavior in $1Q$ for $\M^+$ when $p>p^{s,a}_+$.}
	\label{Fig flow}
\end{figure}

The next proposition gathers the crucial dynamics at each stationary point. 

\begin{prop}[$\M^\pm$]\label{local study stationary points}
The following properties are verified for the systems \eqref{DS+} and \eqref{DS-},
	\begin{enumerate}
\item The origin $O$ is a saddle point. The stable and unstable directions of the linearized system are the $X$ and $Z$ axes respectively;
		
\item $N_0  $ is a saddle point. The tangent unstable direction is parallel to the line 
\begin{center}
	$Z= \frac{-p\lambda (N+a)}{N+2+2a} X$\; if the operator is $\M^+$,\quad 
	$Z= \frac{-p\Lambda (N+a)}{N+2+2a} X$\; for $\M^-$;
\end{center}

\item $A_0 $ is a saddle point for $p>p_\pm^{s,a}$. The linear stable direction is parallel to the line 
\begin{center}
$Z= \frac{-p(\tilde{N}_+ - 2) +2+a}{\tilde{N}_+ - 2} \Lambda X$\; in the case of $\M^+$,
\quad	$Z= \frac{-p(\tilde{N}_- - 2) +2+a}{\tilde{N}_- - 2} \lambda X$\; for $\M^-$,
\end{center}
while the unstable tangent direction lies on the $X$ axis.	For $p< p_\pm^{s,a}$ $A_0$ is a source;

At $p=p^{s,a}_\pm$  $A_0$ coincides with $M_0$ and belongs to the $X$ axis. In this case, it is not a hyperbolic stationary point.

\item For $p<p^{s,a}_\pm$ $M_0$ belongs to the fourth quadrant. Also, $M_0\in 1Q \,\Leftrightarrow\, p>p^{s,a}_\pm$ in which case:
\vspace{-0.1cm}
\begin{enumerate}[(i)]
	\item $M_0 $ is a source if $p_\pm^{s,a}<p<p_\pm^{p,a}$;
	
	\item $M_0$ is a sink for $p>p_\pm^{p,a}$;
	
	\item $M_0$ is a center at $p=p^{p,a}_\pm$.
\end{enumerate}
	\end{enumerate}
\end{prop}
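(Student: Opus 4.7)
The plan is to compute the Jacobian of the vector field at each stationary point, exploiting the fact that each such point sits in the interior of a single smooth region ($R^\pm_\lambda$ or $R^\pm_\Lambda$), so the classical linearization theory applies. I detail only $\M^+$ via \eqref{DS+}; the $\M^-$ case is identical, with the substitutions $\lambda\leftrightarrow\Lambda$ and $\tilde{N}_+\leftrightarrow\tilde{N}_-$.

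First I locate the points. Both $O$ and $A_0=(\tilde{N}_+-2,0)$ lie on the $X$ axis, hence in $R^-_\lambda$, where $\dot X=X[X-(\tilde{N}_+-2)+Z/\Lambda]$ and $\dot Z=Z[\tilde{N}_++a-pX-Z/\Lambda]$. The point $N_0=(0,\lambda(N+a))$ lies in $R^+_\lambda$ since $a>-1$, where instead $\dot X=X[X-(N-2)+Z/\lambda]$ and $\dot Z=Z[N+a-pX-Z/\lambda]$. Finally, using $\alpha(p-1)=2+a$, one checks $Z_0=\Lambda(\tilde{N}_+-2-\alpha)<\lambda(N-1)$, so $M_0\in R^-_\lambda$ whenever it lies in $1Q$.

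Parts (1)--(3) are then routine. The Jacobian at $O$ is diagonal with entries $-(\tilde{N}_+-2)<0$ and $\tilde{N}_++a>0$, giving a saddle whose stable and unstable lines are the coordinate axes. At $N_0$ the Jacobian is lower triangular with eigenvalues $2+a>0$ and $-(N+a)<0$; solving the eigenvector equation for the unstable eigenvalue yields the slope $-p\lambda(N+a)/(N+2+2a)$ of (2). At $A_0$ it is upper triangular with eigenvalues $\tilde{N}_+-2>0$ and $\tilde{N}_++a-p(\tilde{N}_+-2)$; the second eigenvalue is negative when $p>p^{s,a}_+$, positive when $p<p^{s,a}_+$, and vanishes precisely at $p=p^{s,a}_+$, which is also the value where $A_0=M_0$, giving the nonhyperbolic statement. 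The eigenvector attached to the stable eigenvalue gives the line announced in (3), while the $X$ axis is automatically the unstable manifold, being flow-invariant.

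The crux is $M_0$. Substituting $p\alpha=\alpha+2+a$ repeatedly to simplify, the Jacobian becomes
\[
J(M_0)=\begin{pmatrix}\alpha & \alpha/\Lambda\\ -p\Lambda(\tilde{N}_+-2-\alpha) & \alpha-(\tilde{N}_+-2)\end{pmatrix}.
\]
The same identity collapses the determinant to $D=\alpha(p-1)(\tilde{N}_+-2-\alpha)$, strictly positive for $p>p^{s,a}_+$ (so $M_0$ is never a saddle inside $1Q$), while the trace reduces to $T=2\alpha-(\tilde{N}_+-2)$, whose sign flips precisely at $p=p^{p,a}_+$. Combining these two facts yields a source for $p^{s,a}_+<p<p^{p,a}_+$ and a sink for $p>p^{p,a}_+$. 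The one subtle point is $p=p^{p,a}_+$, where $T=0$ and the linearization exhibits purely imaginary eigenvalues; for the present proposition it suffices to record this as the (linear) center case, the question of genuine periodic orbits of the nonlinear system being addressed separately in the later sections via the global structure of the flow.
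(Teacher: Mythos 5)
Your computation is correct and follows essentially the same route as the paper's Appendix: locate each stationary point inside a single smooth region, compute the (triangular, resp.\ full) Jacobian there, read off eigenvalues and eigenvectors for $O$, $N_0$, $A_0$, and run the trace--determinant analysis at $M_0$ using $p\alpha=\alpha+2+a$; all the signs and the eigendirections you report agree with the paper's. The one point to flag is item (4)(iii): purely imaginary eigenvalues of the linearization do not by themselves make $M_0$ a center of the nonlinear (piecewise-smooth) system --- it could a priori be a degenerate focus --- and the paper closes this gap not by ``the global structure of the flow'' but by exhibiting a first integral (the energy $E$ is constant on $R^-_\lambda\cup\ell_+$ exactly at $p=p^{p,a}_\pm$, see Proposition \ref{prop M0 center}), which forces all nearby orbits to be closed. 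Your deferral of this point is consistent with how the paper organizes the material, but a self-contained proof of the proposition as stated would need that energy (or an equivalent) argument.
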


\begin{figure}[!htb]
	\centering
	\includegraphics[scale=0.5]{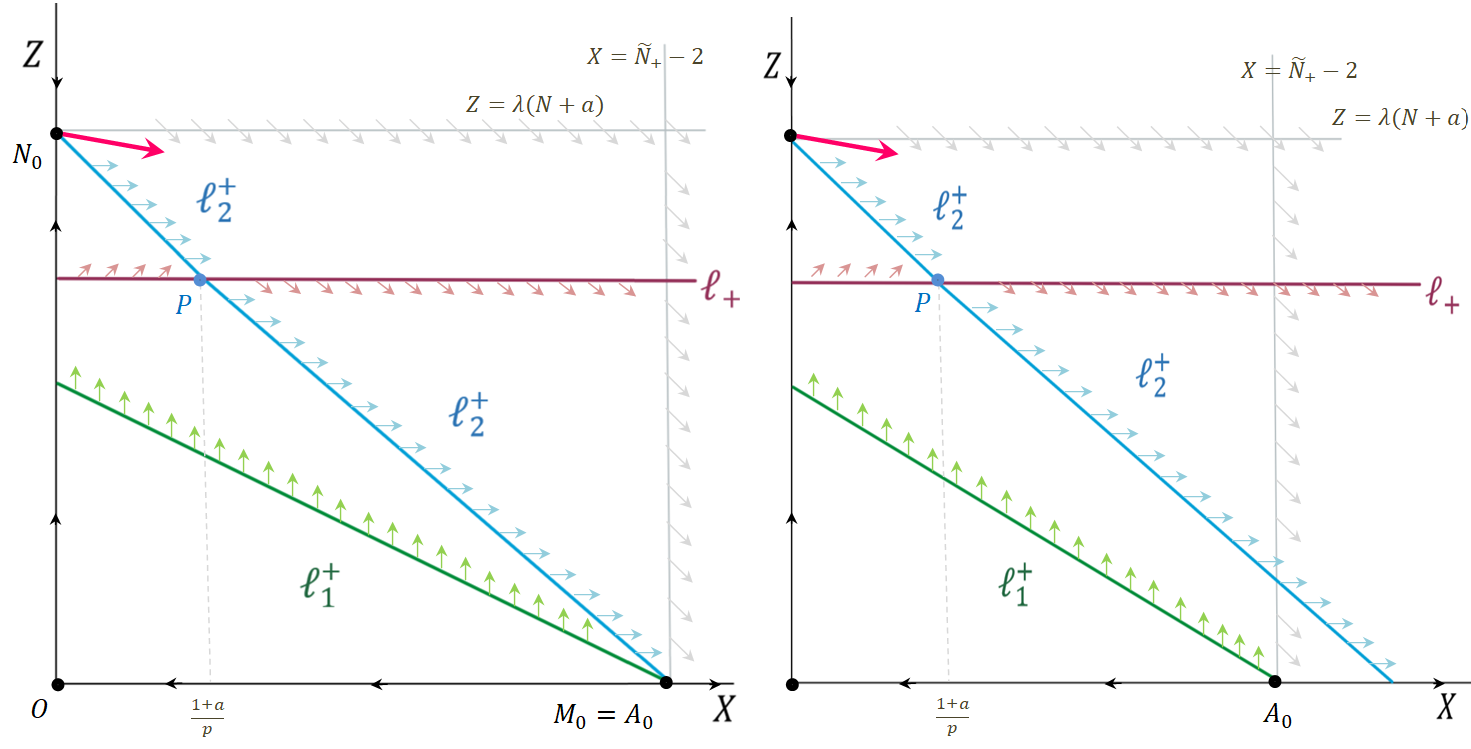}
	\caption{The flow behavior in $1Q$ for $\M^+$ when $p= p^{s,a}_+$ (LHS), and $p< p^{s,a}_+$ (RHS).}
	\label{Fig flow2}
\end{figure}

The dynamics at each stationary point depends upon the linearization of the system \eqref{DS+}. Since the point $N_0$ belongs to $R^+_\lambda$ where the system corresponds to the Henon equation for the standard Laplacian, we could just refer to \cite{BV} for the local analysis of $N_0$, as long as $p> p^{p,a}_\pm$. The other points $O,N_0,A_0$ instead belong to $R^-_\lambda$ where the system now corresponds to the Henon equation for the Laplacian in dimension $\tilde{N}_\pm$. In this last case some variations with respect to \cite{BV} are needed.
Since the classification of stationary points is the heart of our analysis and for reader's convenience, details are provided in the Appendix.

On the other hand, in both cases, some deeper analysis is required when $p\leq p^{p,a}_\pm$. We treat this case in Proposition \ref{p=p^s} by using the dynamics of the system.  
For $p=p^{p,a}_\pm$ we refer to the Appendix, see Proposition \ref{prop M0 center}. 

\medskip

To finish the section, a local uniqueness result follows directly from Propositions \ref{prop GH} and \ref{local study stationary points}.
\begin{prop}\label{local uniqueness}
For every $p>1$ there is a unique trajectory coming out from $N_0$ at $-\infty$, which we denote by $\Gamma_p$. Further, for $p>p^{s,a}_\pm$ there exists a unique trajectory arriving at $A_0$ at $+\infty$ that we denote by $\Upsilon_{p}$. In terms of Definition \ref{alpha and omega limits},
	\begin{align}\label{Gammap}
	\textrm{for all $p>1$,\qquad $\Gamma_p$\;\; is such that\quad $\alpha(\Gamma_p)=N_0$;}
	\end{align}
	\vspace{-1cm}
	\begin{align}\label{Upsilonp}
	\textrm{for all $p>p^{s,a}_\pm$, \qquad  $\Upsilon_p$ \;\;is such that\quad $\omega(\Upsilon_p)=A_0$.}
	\end{align} 
\end{prop}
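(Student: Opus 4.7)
\textbf{Proof plan for Proposition \ref{local uniqueness}.} The strategy is a direct application of the stable/unstable manifold theorem for saddles (Proposition \ref{prop GH}), combined with the explicit linearizations recorded in Proposition \ref{local study stationary points} and the fact that the coordinate axes are invariant under the flow of \eqref{DS+} and \eqref{DS-}.

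For $\Gamma_p$: by Proposition \ref{local study stationary points}(2) the point $N_0$ is a saddle for every $p>1$, with unstable line of strictly negative slope (equal to $-p\lambda(N+a)/(N+2+2a)$ for $\mathcal{M}^+$, and the analogue with $\Lambda$ for $\mathcal{M}^-$). Proposition \ref{prop GH} then produces exactly two trajectories emanating from $N_0$, each tangent to this unstable line at $N_0$. Since $N_0$ lies on the positive $Z$-axis, negativity of the slope forces one of these two trajectories to enter $1Q$ (moving to the right and downward) and the other to enter the second quadrant (moving left and upward). The $Z$-axis itself is invariant and constitutes the local stable manifold at $N_0$, so no further trajectory can leave $N_0$ into $1Q$. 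The branch entering $1Q$ is then the desired unique $\Gamma_p$.

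For $\Upsilon_p$: assume $p>p^{s,a}_\pm$, so that by Proposition \ref{local study stationary points}(3), $A_0$ is a saddle lying on the positive $X$-axis. Its unstable direction coincides with the $X$-axis, while its stable line has strictly negative slope (given explicitly in the statement). Proposition \ref{prop GH} again supplies exactly two trajectories arriving at $A_0$, tangent to this stable line: one approaches from $1Q$ (from above and to the right of $A_0$), the other from the fourth quadrant. Since the $X$-axis is invariant and carries the unstable manifold, no other orbit in $1Q$ can converge to $A_0$. This identifies $\Upsilon_p$ uniquely.

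The only delicate point is to verify that in each case the two trajectories produced by the stable/unstable manifold theorem genuinely split between the interior of $1Q$ and the adjacent quadrant, so that exactly one of them lies in $1Q$. This is immediate from the strict negativity of the slopes of the tangent lines together with the fact that the saddles $N_0$ and $A_0$ sit on the coordinate axes, while the complementary invariant manifold at each saddle coincides with the axis on which the saddle lies. Apart from this geometric observation, the argument is a routine consequence of the local hyperbolic theory already invoked.
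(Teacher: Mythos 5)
Your argument is correct and is exactly the route the paper takes: the paper states that Proposition \ref{local uniqueness} ``follows directly from Propositions \ref{prop GH} and \ref{local study stationary points}'' without further detail, and your proposal simply fills in that detail — saddle structure of $N_0$ and $A_0$, invariance of the coordinate axes carrying the complementary manifolds, and the negative slopes of the unstable (resp.\ stable) tangent lines forcing exactly one of the two branches from Proposition \ref{prop GH} into $1Q$. No gaps; the verification that only one branch lies in $1Q$ is the right point to single out, and your justification of it is sound.
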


\begin{rmk}\label{graph}
Notice that these trajectories uniquely determine the global unstable and stable manifolds of the stationary points $N_0$ and $A_0$ respectively.  In particular, by Proposition \ref{prop GH} they are graphs of functions in a neighborhood of the stationary points in their respective ranges of $p$.
The tangent directions at $N_0,A_0$ are displayed together in Figure \ref{Fig flow} for $p> p^{s,a}_\pm$. In fact, they both belong to the region where $\dot{X}>0$ and $\dot{Z}<0$ in their respective ranges of $p$.
\end{rmk}

\subsection{Periodic orbits}\label{section periodic}

In this section we continue investigating the limit sets of the trajectories. Let us see in which intervals of $p$ the dynamical systems \eqref{DS+} and \eqref{DS-} allow the existence of periodic orbits.

The Poincaré-Bendixson theorem \cite{Hale} for planar autonomous systems says that the only admissible $\omega$ and $\alpha$ limits of bounded trajectories are either a stationary point or a periodic orbit. 

Observe that we have the following ordering for the exponents $p^{p,a}_-<p_\Delta^a < p^{p,a}_+$ defined in \eqref{critical exponents a}.

\begin{prop}[Dulac's criterion]\label{Dulac}
Let $\lambda<\Lambda$. 
For $\M^+$ there are no periodic orbits of \eqref{DS+} when $1<p\leq  p_\Delta^a$ or $p> p^{p,a}_+$. For $\M^-$ no periodic orbits of \eqref{DS-} exist if $1<p<p^{p,a}_-$ or $p\geq p_\Delta^a$. Moreover, 
\vspace{-0.2cm}
	\begin{enumerate}[(i)]
		\item there are no periodic orbits strictly contained in $R^+_\lambda\cup \ell_+$ $($resp.\ $(R^+_\Lambda\cup \ell_-)$, for any $p>1$;
\vspace{-0.2cm}
		\item periodic orbits contained in $R^-_\lambda\cup \ell_+$ $($resp.\ $R^-_\Lambda\cup \ell_-)$ are admissible only at $p=p^{p,a}_\pm$. 
Also, no periodic orbits at $p^{p,a}_\pm$ can cross the concavity line $\ell_\pm$ twice.
	\end{enumerate}
\end{prop}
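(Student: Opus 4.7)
The plan is to apply Dulac's criterion --- Green's theorem applied to a Dulac-weighted vector field --- with the weight $B(X,Z) = X^{\mu}Z^{\nu}$, where $\mu = 2/(p-1)$ and $\nu = (3-p)/(p-1)$. These exponents are chosen precisely so that, in each of the two regions $R^\pm_\lambda$ (resp.\ $R^\pm_\Lambda$), both the $X$-linear and the $Z$-linear terms of $\operatorname{div}(BF)$ cancel, leaving only a constant multiple of $X^\mu Z^\nu$. Note that $B$ is smooth and strictly positive on the interior of $1Q$, which is where every periodic orbit must live since the coordinate axes are invariant under the flow.

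First I would dispose of (i). In $R^+_\lambda$ the inequality $Z/\lambda > N-1$ gives
\[
\dot X \;=\; X\bigl(X - (N-2) + Z/\lambda\bigr) \;>\; X(X+1) \;>\; 0,
\]
and by Proposition~\ref{prop flow}(1) also $\dot X > 0$ on $\ell_+$; hence $X$ is strictly increasing along every trajectory confined to $R^+_\lambda \cup \ell_+$, ruling out closed orbits there. The same computation handles $\M^-$ in $R^+_\Lambda \cup \ell_-$. As a consequence, the region bounded by any hypothetical periodic orbit must have positive-area intersection with $R^-_\lambda$ (resp.\ $R^-_\Lambda$).

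A direct calculation with the above $\mu, \nu$ then yields
\begin{align*}
\operatorname{div}(BF) &= X^\mu Z^\nu \cdot \frac{N+2a+2 - (N-2)p}{p-1} \quad \textrm{in } R^+_\lambda, \\
\operatorname{div}(BF) &= X^\mu Z^\nu \cdot \frac{\tilde{N}_+ + 2a + 2 - (\tilde{N}_+ - 2)p}{p-1} \quad \textrm{in } R^-_\lambda,
\end{align*}
whose numerators vanish exactly at $p = p^a_\Delta$ and $p = p^{p,a}_+$, respectively; the symmetric identities hold for $\M^-$ under $\lambda \leftrightarrow \Lambda$ and $\tilde{N}_+ \leftrightarrow \tilde{N}_-$. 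For any periodic orbit $\gamma$ bounding a region $D$, Green's theorem gives $\iint_D \operatorname{div}(BF)\, dA = 0$ since $BF$ is tangent to $\gamma$; the jump of $\operatorname{div}(BF)$ across $\ell_\pm$ is harmless because $BF$ itself is globally Lipschitz there.

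Reading off signs region by region now finishes the proof. For $\M^+$ and $p \leq p^a_\Delta$ the integrand is nonnegative everywhere and strictly positive on $D \cap R^-_\lambda$, which has positive area by the previous step --- contradiction; for $p > p^{p,a}_+$ it is strictly negative throughout $D$; the symmetric argument covers $\M^-$ for $p < p^{p,a}_-$ and $p \geq p^a_\Delta$. For (ii), a periodic orbit confined to $R^-_\lambda \cup \ell_+$ reduces the Dulac integral to the $R^-_\lambda$-constant times $\iint_D X^\mu Z^\nu\, dA$, forcing $p = p^{p,a}_+$; and at this $p$, any periodic orbit crossing $\ell_+$ transversely twice would enclose a region with positive-area intersection with $R^+_\lambda$, where the integrand is strictly negative, again contradicting $\iint_D = 0$ (a tangent touch at the special point $P$ of Remark~\ref{remark concavity} remains admissible). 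The main delicacy is the simultaneous cancellation of both linear coefficients of $\operatorname{div}(BF)$ in both regions with a single Dulac factor, together with the careful application of Green's theorem across the discontinuity line of the divergence.
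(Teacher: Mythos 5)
Your proposal is correct and follows essentially the same Bendixson–Dulac strategy as the paper: a power-law weight $X^{\mu}Z^{\nu}$ chosen to cancel the linear terms of the divergence in each of $R^{\pm}_\lambda$, Green's theorem applied across the Lipschitz interface $\ell_\pm$, and the same region-by-region sign analysis (including the observation that a periodic orbit confined to $R^-_\lambda\cup\ell_+$ can meet $\ell_+$ only tangentially, and the correct sign --- negative --- of the integrand on $R^+_\lambda$ at $p=p^{p,a}_+$). Two minor points in your favor: your exponent $\mu=2/(p-1)$ is the one that actually annihilates both linear coefficients for general $a>-1$ (the paper's printed choice $\alpha=(2+a)/(p-1)$ leaves residual terms proportional to $a$, so your computation is the correct one for the weighted case), and your monotonicity argument for part (i) ($\dot X>0$ throughout $R^+_\lambda\cup\ell_+$) is a sound, more elementary substitute for the paper's Poincaré--Bendixson/stationary-point argument.
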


\begin{proof}
Define $\varphi(X,Z) = X^\alpha Z^\beta$, where 
	$\beta=\frac{3-p}{p-1}$ and $\alpha$ as in \eqref{critical exponents a}. Set
	$	\Phi(X,Z) = \partial_X(\varphi f) + \partial_Z(\varphi g)	$, with $f$ and $g$ defined in \eqref{Prob x=(X,Z)}.
	For $\M^+$ we have
\vspace{-0.2cm}
	\begin{align*}
	\Phi(X,Z) 
	&= 
	\begin{cases}
\,X^\alpha Z^\beta \,[\,\alpha (X-(N-2) + \frac{Z}{\lambda}) + \beta (N+a - pX - \frac{Z}{\lambda}) + (2-p)X + 2 - \frac{Z}{\lambda}\,]\;\; \textrm{ in\;$R^+_\lambda$,}
	\\
\,X^\alpha Z^\beta \,[\,\alpha (X-(\tilde{N}_+-2) + \frac{Z}{\Lambda}) + \beta (\tilde{N}_++a - pX - \frac{Z}{\Lambda}) + (2-p)X + 2 - \frac{Z}{\Lambda}\,]\;\; \textrm{ in\;$R^-_\lambda$,}
	\end{cases}
	\\
	&=  
	\begin{cases}
\,{\varphi (X,Z)}{(p-1)^{-1}}\,[ \,-p(N-2) + (N+2+2a)\,] \quad \textrm{ in\; $R^+_\lambda$,}
	\\
\,{\varphi (X,Z)}{(p-1)^{-1}}\,[ \,-p(\tilde{N}_+-2) + (\tilde{N}_++2+2a)\, ] \quad \textrm{ in\; $R^-_\lambda$.}
	\end{cases}
	\end{align*}
Both expressions are positive if $1<p<\min(p^{p,a}_+,p_\Delta^a) = p_\Delta^a$; and both are negative if $p>\max(p^{p,a}_+,p_\Delta^a) = p^{p,a}_+$.
Then one concludes by the same argument as in the classical Bendixson–Dulac criterion, see also \cite[Theorem 3.1]{GQsingular}.
Indeed, the vector field $F=(f,g)$ is Lipschitz continuous in $(X,Z)$, so Green's area formula for the domain $D$ enclosed by a periodic trajectory applies such as
\begin{align}\label{eq Dulac}
\textstyle{\int_{\partial D} \varphi \,\{ f \,\rmd Z - g\,\rmd X \}  = 
		\int_D \Phi (X,Z) \,\rmd X \rmd Z=
		\int_{R^+_\lambda\cap D}\Phi (X,Z)\, \rmd X \rmd Z + \int_{R^-_\lambda\cap D}\Phi (X,Z)\, \rmd X \rmd Z}.
\end{align}
The RHS is nonzero for $p\in (1,p_\Delta^a)\cup (p^{p,a}_+,\infty)$, but the LHS is zero because $\rmd X=f \rmd t$, $\rmd Z=g\rmd t$. 
Further, at $p^a_\Delta$ one has $\Phi=0$ in $R^+_\lambda$ and so the first integral in \eqref{eq Dulac} (in the RHS) vanishes, while the second one is positive.
For $\M^-$ the computations are similar by using that $\min(p^{p,a}_-,p_\Delta^a) = p^{p,a}_-$ and  $\max(p^{p,a}_-,p_\Delta^a) = p_\Delta^a$.

Next we look at the interval $[p^a_\Delta, p^{p,a}_+]$ for $\M^+$.
Note that Poincaré-Bendixson theorem guarantees the existence of a stationary point in the domain $D$ inside a periodic orbit.
Since the only admissible stationary point in the interior of $1Q$ is $M_0\in R^-_\lambda$ for $p>p^{s,a}_+$, while for $p\leq p^{s,a}_+$ $M_0$ is not an option (see Figure \ref{Fig flow2}), then $(i)$ follows. 

To prove $(ii)$ let us observe that if a periodic orbit is contained in $R^-_\lambda\cup \ell_+$ then by Proposition~\ref{prop flow} (1) it may intersect the line $\ell_+$ only at one point, namely the point $P$. Hence we can repeat the previous argument, neglecting the integral expression in $R^+_\lambda$. Then we get that there are no periodic orbits in $R^-_\lambda\cup \ell_+$ for every $p\neq p^{p,a}_+$.
To finish, if a periodic orbit existed which crossed twice the line $\ell_+$ at $p^{p,a}_+$, then the first integral of \eqref{eq Dulac} (in the RHS) would be positive, while the second one is equal to zero because $\Phi=0$ in $R_\lambda^-$.
The case for $\M^-$ and $p^{p,a}_-$ is analogous.
\end{proof}

Notably Dulac's criterion brings out the critical exponents $p_\Delta^a$ and $p^{p,a}_\pm$.
They correspond to critical exponents for the two Laplacian operators $\Delta_N$ and $\Delta_{{\tilde{N}}_\pm}$, in dimensions $N$ and $\tilde{N}_\pm$.

Other limit cycles $\theta$ are admissible by the dynamical system as far as they cross $\ell_\pm$ twice. They do appear for $\M\pm$ as we shall see in Sections \ref{section Pucci+}, \ref{section Pucci-}. This happens because Dulac's criterion is inconclusive in a whole interval when $\lambda<\Lambda$. Formally, the Pucci problem opens space for new periodic orbits in order to appropriately glue both Laplacian operators.

\subsection{A priori bounds}\label{section AP bounds}

We prove ahead important bounds for trajectories of \eqref{DS+} or \eqref{DS-} which are defined for all $t$ in intervals of type $(\hat{t},+\infty)$ or $(-\infty,\hat{t})$. 

By Poincaré-Bendixson theorem, if a trajectory of \eqref{DS+} or \eqref{DS-} does not converge to a stationary point neither to a periodic orbit, either forward or backward in time, then it necessarily blows up in that direction. 
In the next propositions we prove that a blow up may only occur in finite time. The first result is obtained in the first quadrant.

\begin{prop}\label{AP bounds}
	Let $\tau$ be a trajectory of \eqref{DS+} or \eqref{DS-} in $1Q$, with $\tau(t)=(X(t),Z(t))$ defined for all $t\in(\hat{t},+\infty)$, for some $\hat{t}\in \real$. Then
\vspace{-0.2cm}
	\begin{align}\label{X< tildeN-2}
\textrm{$X(t)< \tilde{N}_\pm-2$,  \quad for all\; $t\geq \hat{t}$.}
	\end{align}
If instead, $\tau$ is defined for all $t\in(-\infty,\hat{t})$, for some $\hat{t}\in \real$, then
\begin{align}\label{Z< lambdaN}
\textrm{$Z(t)<\lambda (N+a)$ in the case of $\M^+$,\quad $Z(t)<\Lambda (N+a)$ for $\M^-$,\quad for all\; $t\leq \hat{t}$.}
\end{align}
In particular, if a global trajectory is defined for all $t\in \real$ in $1Q$ then it remains inside the box $(0,\tilde{N}_+-2)\times (0,\lambda (N+ a))$ in the case of $\M^+$, while it stays in $(0,\tilde{N}_--2)\times (0,\Lambda (N+ a))$ for $\M^-$.
\end{prop}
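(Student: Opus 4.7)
The plan is to rule out each potential violation by showing it would force finite-time blow-up of the offending coordinate, contradicting the hypothesis that $\tau$ is defined on the whole forward (resp.\ backward) half-line. The box claim then follows immediately by combining both estimates.

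For the bound $X(t) < \tilde{N}_+ - 2$ (the $\M^+$ case), I would argue by contradiction: suppose $X(t_0) \geq \tilde{N}_+ - 2$ for some $t_0 \geq \hat{t}$. Using the explicit formulas for $\dot{X}$ in $R^+_\lambda$ and $R^-_\lambda$ from Section~\ref{section 2.1}, together with the continuous matching $\dot{X} = X(X+1)$ on $\ell_+$ noted in Proposition~\ref{prop flow}(1), I would verify the uniform lower bound
\[
\dot{X} \;\geq\; X\bigl(X - (\tilde{N}_+ - 2)\bigr)
\]
throughout $1Q$: in $R^-_\lambda$ this is immediate from $Z/\Lambda \geq 0$, while in $R^+_\lambda$ the hypothesis $Z/\lambda > N-1$ yields $\dot{X} > X(X+1)$, which dominates the right-hand side since $\tilde{N}_+ - 2 > 0$. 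A sign analysis of $\dot{X}$ on the vertical line $\{X = \tilde{N}_+ - 2\}$ (where $\dot{X} > 0$ whenever $Z > 0$) shows the trajectory must actually enter $\{X > \tilde{N}_+ - 2\}$ strictly just after $t_0$. Setting $Y = X - (\tilde{N}_+ - 2) > 0$ the inequality becomes the Riccati-type bound $\dot{Y} \geq Y^2 + (\tilde{N}_+ - 2)Y \geq Y^2$, which produces blow-up in finite forward time, contradicting that $\tau$ is defined on $(\hat{t},+\infty)$.

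For the bound $Z(t) < \lambda(N+a)$, suppose $Z(t_0) \geq \lambda(N+a)$ at some $t_0 \leq \hat{t}$. Since $a > -1$ we have $\lambda(N+a) > \lambda(N-1)$, so $(X(t_0),Z(t_0))\in R^+_\lambda$, where the system reads $\dot{Z} = Z(N+a - pX - Z/\lambda) \leq Z(N+a - Z/\lambda)$. Setting $W = Z - \lambda(N+a) \geq 0$ this rewrites as
\[
\dot{W} \;\leq\; -\frac{W(W + \lambda(N+a))}{\lambda} \;\leq\; -\frac{W^2}{\lambda}.
\]
Using $\dot{Z}(t_0) < 0$ whenever $X(t_0) > 0$ (and invariance of the $Z$-axis to handle $X \equiv 0$) one may assume $W(t_0) > 0$ strictly; reversing time then yields $(1/W)' \leq -1/\lambda$ backwards, forcing $W$ to blow up to $+\infty$ at some finite $t^* \geq t_0 - \lambda/W(t_0)$ and contradicting the existence of $\tau$ on $(-\infty,\hat{t})$. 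The $\M^-$ case is entirely analogous after interchanging $\lambda$ and $\Lambda$ and replacing $\tilde{N}_+$ by $\tilde{N}_-$. Finally, a global trajectory satisfies the hypotheses of both parts, so it is confined to the stated box.

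The main obstacle I expect is verifying the Riccati-type inequality uniformly across the concavity line $\ell_\pm$, where the vector field is only piecewise $C^1$. The key is the continuous matching of both expressions on $\ell_\pm$ combined with the trivial positivity of $Z$ in $1Q$, which lets a single one-dimensional ODE comparison control the orbit independently of how many times it crosses between $R^+_\lambda$ and $R^-_\lambda$ (resp.\ $R^+_\Lambda$ and $R^-_\Lambda$).
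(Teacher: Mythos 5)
Your proof is correct, and it reaches the same conclusion by the same underlying mechanism as the paper — a quadratic differential inequality forcing finite-time blow-up of the offending coordinate — but the treatment of the forward bound \eqref{X< tildeN-2} is genuinely streamlined. The paper first shows $X(t)\to+\infty$ via Poincaré--Bendixson (using that $Z$ is eventually bounded and that no stationary points or periodic orbits live to the right of $\{X=\tilde{N}_+-2\}$), then waits until $X>N-2$ and splits into two cases according to whether the orbit sits in $R^+_\lambda$ or $R^-_\lambda$, integrating the exact logarithmic expression \eqref{int1F X}--\eqref{int2F X} with the appropriate dimension ($\tilde N_+$ or $N$) in each region. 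You bypass all of this by exhibiting the single uniform bound $\dot X\ge X\bigl(X-(\tilde N_\pm-2)\bigr)$ on all of $1Q$ — trivially in $R^-_\lambda$ from $Z\ge 0$, and in $R^+_\lambda$ from $Z/\lambda>N-1$ together with $\tilde N_\pm>2$ — so the Riccati comparison applies regardless of how often the orbit crosses $\ell_\pm$, and neither Poincaré--Bendixson nor the case analysis is needed. Your verification that the orbit enters $\{X>\tilde N_\pm-2\}$ strictly, and the backward analysis of $W=Z-\lambda(N+a)$ including the edge case $W(t_0)=0$, are both sound; the $Z$-part is essentially the paper's argument rewritten as a clean Riccati inequality. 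The only thing the paper's longer route buys is the explicit integrated formula \eqref{int2F X}, which it recycles verbatim in the proof of Proposition \ref{prop 3Q}; your version would require restating the (equally elementary) comparison there.
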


\begin{proof}
Let us first prove \eqref{X< tildeN-2} when the operator is $\M^+$, $\tilde{N}_+\leq N$.
Arguing by contradiction we assume that for some $t_1\geq \hat{t}$ we have $X(t_1)\geq \tilde{N}_+-2$.
Notice that $\dot{X}>0$ on the half line $L^+=\{(X,Z):X=\tilde{N}_+-2\}\cap 1Q$, see \eqref{ell 1+}.
Therefore $X(t)>X(t_1)\geq \tilde{N}_+-2$ for all $t> t_1$.

\smallskip

We claim that $X(t)\rightarrow+\infty$ as $t\to +\infty$. To see this, first notice that $Z$ is bounded from $t_1$ on, since $\dot{Z}<0$ to the right of $L^+$, see \eqref{ell 2+}.
If we had $X(t) \leq C$ for some $C>0$ for $t\geq t_1$, then $\tau$ would be a bounded trajectory from $t_1$ on.
Then by Poincaré-Bendixson it should converge to a stationary point as $t\to +\infty$. Notice that periodic orbits are not allowed to the right of $L^+$ by the direction of the vector field, see Figure \ref{Fig flow}. This proves the claim, since no stationary points exist on the right of $L^+$.

Thus, we can pick a time $t_2$ such that $X(t_2)>N-2\geq \tilde{N}_+-2$. Again by monotonicity, $X(t)>\tilde{N}_+-2$ for all $t\geq t_2$.

Then we have two cases: either the trajectory $\tau$ reaches the region $R^-_\lambda$ for some $t_3\geq t_2$, or it stays in $R^+_\lambda$ for all time.
If the first holds, then $\tau(t)$ remains there for all $t\geq t_3$, since $\dot{Z}<0$ to the right of $L^+$, see Figure \ref{Fig flow}.
Observe that the first equation in \eqref{DS+} yields 
$\frac{\dot{X}}{X[X-(\tilde{N}_+-2)]}\geq 1$. Moreover,
\begin{align}\label{int1F X}
\textstyle{\frac{(\tilde{N}_+-2) \dot{X} }{X[X-(\tilde{N}_+-2)]}
	=\frac{\dot{X}}{X-(\tilde{N}_+-2)}-\frac{\dot{X}}{X}
	=\frac{\rmd}{\rmd t}\,\mathrm{ln} \left(  \frac{X(t)-\tilde{N}_++2}{X(t)}  \right) \;\; \textrm{ for all } t\geq t_3.}
\end{align}
Therefore, by integrating \eqref{int1F X} in the interval $[t_3,t]$ we get
\begin{align}\label{int2F X}
\textstyle{X(t)\geq \frac{\tilde{N}_+-2}{1-c e^{(\tilde{N}_+-2)(t-t_3)} }}, \;\; \textrm{ where }\;\;\textstyle{c=1-\frac{\tilde{N}_+-2}{X(t_3)}\in (0,1)},
\end{align}
and in particular $X$ blows up in the finite time $t_1=t_3+\frac{\mathrm{ln}(1/c)}{\tilde{N}_+-2}$.

If instead $\tau$ stays in $R^+_\lambda$ from $t_2$ on, then the same computations developed with $N$ in place of $\tilde{N}_+$ imply, using the first equation in \eqref{DS+}, that $X$ blows up in finite time. Both ways one gets a contradiction.

Let us now prove \eqref{Z< lambdaN} for $\mathcal{M}^+$. Notice that $\dot{Z}<0$ in the region above the line $Z=\lambda (N+a)$ which is contained in $R^+_\lambda$, see Figure \ref{Fig flow}.
Now, if $Z=\lambda (N+a)$ occurs at some point for the orbit $\tau$, then in particular there is some $t_0$ such that $Z>\lambda (N+a)$ for all $t\leq t_0$, thus $\dot{Z}\leq Z(N+a-Z/\lambda)$. In particular $\tau$ remain in the region $R^+_\lambda$ up to the time $t_0$. Hence,
\begin{center}
	$\frac{\lambda (N+a)  \dot{Z} }{\lambda (N+a)-Z}
	=\frac{\dot{Z}}{Z}-\frac{\dot{Z}}{Z-\lambda (N+a)}
	=\frac{\rmd}{\rmd t} \{\, \mathrm{ln} (\frac{Z(t)}{Z(t)-\lambda (N+a)}\, ) \} $ \; for all $t\leq t_0$.
\end{center}
integration in $[t,t_0]$ as before gives us that the trajectory blows up in finite time.

The proof of \eqref{X< tildeN-2} and \eqref{Z< lambdaN} for the operator $\M^-$ is analogous if one uses $\tilde{N}_-\geq N$. 
\end{proof}

Now, a similar argument of Proposition \ref{AP bounds} allows to characterize all the orbits in $3Q$. 

\begin{prop}\label{prop 3Q}
Every orbit of \eqref{DS+ 3Q}  or \eqref{DS- 3Q} in $3Q$ blows up in finite time, backward and forward. 
The vector field in there always point to the right and down, with $\dot{X}>0$ and $\dot{Z}<0$. 
\end{prop}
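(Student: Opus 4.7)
The plan is to prove the sign statement by direct inspection of the product structure of \eqref{DS+ 3Q}--\eqref{DS- 3Q} in $3Q$, and then to extract finite-time blow-up in both time directions from two one-sided Riccati-type comparisons. Throughout I write the argument for \eqref{DS+ 3Q} (operator $\M^+$, with $\tilde{N}_-$ and $\Lambda$); for \eqref{DS- 3Q} one simply replaces $\tilde{N}_-$ by $\tilde{N}_+$ and $\Lambda$ by $\lambda$, and all computations are identical.

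\textbf{Sign of the flow.} I expand
\begin{equation*}
\dot{X} \;=\; X^2 -(\tilde{N}_- -2)X + \frac{XZ}{\Lambda}, \qquad \dot{Z} \;=\; (\tilde{N}_- + a)Z - pXZ - \frac{Z^2}{\Lambda}.
\end{equation*}
In $3Q$ (where $X<0$, $Z<0$) every summand defining $\dot{X}$ is strictly positive ($X^2>0$; $-(\tilde{N}_- -2)X>0$ because $X<0$ and $\tilde{N}_-\geq N\geq 3$; $XZ/\Lambda>0$ since $X,Z<0$), while every summand defining $\dot{Z}$ is strictly negative ($Z(\tilde{N}_-+a)<0$ because $a>-1$; $-pXZ<0$ since $XZ>0$; $-Z^2/\Lambda<0$). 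Thus $\dot{X}>0$ and $\dot{Z}<0$ on all of $3Q$. Moreover, since each axis carries an explicit factor $X$ or $Z$, both axes are invariant by the flow, so any orbit in $3Q$ remains in $3Q$ throughout its existence interval.

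\textbf{Finite-time blow-up.} Dropping the two negative summands from $\dot{Z}$ gives the Riccati bound $\dot{Z}\leq -Z^2/\Lambda$; setting $W:=-Z>0$ this becomes $\dot{W}\geq W^2/\Lambda$, so $\frac{\rmd}{\rmd t}(1/W)\leq -1/\Lambda$ and $W\to+\infty$ no later than $t=t_0+\Lambda/W(t_0)$, forcing $Z\to-\infty$ in finite forward time. Symmetrically, dropping the two positive summands from $\dot{X}$ yields $\dot{X}\geq X^2$; with $U:=-X>0$ this reads $\dot{U}\leq -U^2$, and reversing time as $s=-t$ gives $\frac{\rmd U}{\rmd s}\geq U^2$, which sends $U\to+\infty$ in finite $s$-time and thus $X\to-\infty$ in finite backward time. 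I do not foresee any genuine obstacle here: the key structural fact is that in $3Q$ every linear and nonlinear term of both $\dot{X}$ and $\dot{Z}$ pushes in the same direction, which makes the comparison with the pure quadratics automatic, and the only caveat---that the estimates need the orbit to stay in $3Q$---is exactly what the first step guarantees.
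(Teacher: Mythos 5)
Your proof is correct and takes essentially the same route as the paper's: both verify the signs of $\dot{X}$ and $\dot{Z}$ term by term in $3Q$ and then obtain finite-time blow-up (backward in $X$, forward in $Z$) from a one-sided comparison with an explicitly integrable scalar ODE — the paper keeps the linear term and integrates a logistic-type inequality $\dot{X}\geq X(X-(\tilde{N}_\pm-2))$, $\dot{Z}\leq Z(\tilde{N}_\pm+a-Z/\Lambda)$ via partial fractions, whereas you drop it and use the pure quadratics $\dot{X}\geq X^2$, $\dot{Z}\leq -Z^2/\Lambda$, an immaterial difference. Your explicit observation that the invariance of the axes confines the orbit to $3Q$, so that the comparison inequalities remain valid on the whole existence interval, is a point the paper leaves implicit.
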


\begin{proof}
Recall that in $3Q$ we have $X,Z<0$. Let us consider  $\M^+$. Hence, by the first equation in \eqref{DS+ 3Q} one gets $\dot{X}\geq X(X-(\tilde{N}_+-2))$, which is positive.
Similarly, by the second equation in \eqref{DS- 3Q} one figures out that $\dot{Z}\leq Z(\tilde{N}_+ +a-Z/\Lambda)$, which is now negative.
Then integration as in \eqref{int1F X}, \eqref{int2F X} gives us the result. For $\M^-$ it is analogous.
\end{proof}

\section{Classification of solutions}\label{section classification}

In this section we classify the solutions of the second order equations \eqref{P radial m} and \eqref{P radial m-} and we show that this induces a classification of the orbits of the dynamical systems \eqref{DS+}-\eqref{DS- 3Q}.
We investigate three kinds of solutions to \eqref{P radial m} and \eqref{P radial m-}: regular solutions, singular solutions, and exterior domain solutions. The denomination of the latter ones will be clarified in Section \ref{subsection exterior}.

We begin by characterizing the blow ups admissible in the first quadrant.

\begin{prop}[Blow-up types in $1Q$]\label{types of blow ups} 
Let $u$ be a positive solution of \eqref{P radial m} or \eqref{P radial m-} in an interval $(R_1,R_2)$, $0<R_1<R_2$, and $\tau=(X,Z)$ be a corresponding trajectory of \eqref{DS+} or \eqref{DS-} lying in $1Q$ through the transformation \eqref{X,Z a}. Then the following holds:
\begin{enumerate}[(i)]
	\item there exists $r_1\in (R_1,R_2)$ such that $u^\prime (r_1)= 0\Leftrightarrow $ there exists $t_1\in \real$ such that $Z(t)\to +\infty $ as $t\to t_1^+$. In addition, $X(t)\to 0$ as $t\to t_1^+$;
 	
	\item there exists  $r_2\in (R_1,R_2)$ such that $u (r_2)= 0\Leftrightarrow $ there exists $t_2\in \real$ such that $X(t)\to +\infty$ as $t\to t_2^-$. Further, $Z(t)\to 0$ as $t\to t_2^-$.
\end{enumerate}
Moreover, no other blow-up types other than those of $(i)$ and $(ii)$ are admissible for $\tau$ in $1Q$.
\end{prop}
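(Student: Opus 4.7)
My plan is to split the proof into three parts: the forward implications in (i) and (ii), which follow by direct substitution into \eqref{X,Z a}; the converses, which require analysis of the dynamics near the relevant boundaries of $1Q$; and a closing argument that these two scenarios are the only finite-time blow-ups.

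\textbf{Forward implications.} If $u^\prime(r_1)=0$ with $u^\prime<0$ for $r>r_1$ nearby, then as $r\to r_1^+$ we have $r^{1+a}u^p(r)\to r_1^{1+a}u^p(r_1)>0$ and $-1/u^\prime(r)\to+\infty$, so $Z(t)\to+\infty$ and $X(t)=-ru^\prime/u\to 0$. For (ii), if $u(r_2)=0$ with $u>0$ for $r<r_2$, then imposing $u(r_2)=u^\prime(r_2)=0$ as Cauchy data for \eqref{P radial m} would force $u\equiv 0$ by uniqueness (the right-hand side is locally Lipschitz in $(u,u^\prime)$ for $p>1$), contradicting $u>0$; hence $u^\prime(r_2)<0$ strictly, so $X=-ru^\prime/u\to+\infty$ while $Z=-r^{1+a}u^p/u^\prime\to 0$.

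\textbf{Converses.} For the converse of (i), assume $Z\to+\infty$ as $t\to t_1^+$. Eventually $\tau$ lies in $R^+_\lambda$ (or $R^+_\Lambda$ for $\M^-$), where $\dot X=X[X-(N-2)+Z/\lambda]>0$ for $Z$ large, so $X$ is monotone and bounded above by its value at any fixed $t_0$ close to $t_1$. The substitution $V=1/Z$ then gives $\dot V=1/\lambda-V(N+a-pX)$, with $\dot V\to 1/\lambda$ as $V\to 0$; this both confirms finite-time blow-up and yields $Z\sim\lambda/(t-t_1)$. The product $W=XZ$ satisfies $\dot W=W\,[(2+a)-(p-1)X]$, whose right-hand side is bounded on $(t_1,t_0)$, so $W$ has a strictly positive finite limit $W^*$. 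Thus $X=W/Z\to 0$, and by \eqref{def u via X,Z} we get $u(r)\to r_1^{-\alpha}(W^*)^{1/(p-1)}\in(0,\infty)$ and $u^\prime=-Xu/r\to 0$, so $u$ extends to $r_1=e^{t_1}$ with $u^\prime(r_1)=0$. The converse of (ii) is analogous in $R^-_\lambda$: the dominant term $\dot X\sim X^2$ produces a finite-time blow-up with $X\sim 1/(t_2-t)$; then $\dot Z/Z\sim -pX$ integrates to $Z\sim c(t_2-t)^p\to 0$, so $W=XZ\to 0$ and $u=r^{-\alpha}W^{1/(p-1)}\to 0$ at $r_2=e^{t_2}$.

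\textbf{No other blow-ups.} The $X$ and $Z$ axes are invariant under the flow, so uniqueness prevents $\tau$ from reaching either axis at finite $t$. Hence any finite-time endpoint of the maximal interval of $\tau$ in $1Q$ must correspond to $|\tau(t)|\to+\infty$. The blow-up rates above show that $Z\to\infty$ forces $X\to 0$ while $X\to\infty$ forces $Z\to 0$, so $X$ and $Z$ cannot diverge together; this pins down precisely the two scenarios in (i) and (ii). The step I expect to be most delicate is the converse of (i): one must simultaneously ensure that the blow-up of $Z$ happens at finite $t_1$ \emph{and} that $XZ$ admits a strictly positive limit, since only then does $u$ survive as a positive real value at $r_1$ rather than collapsing to $0$ or diverging. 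The clean ODE $\dot W = W[(2+a)-(p-1)X]$, with $X$ uniformly bounded on the approach to $t_1$, is the key tool enabling two-sided Gronwall estimates on $W$.
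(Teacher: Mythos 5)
Your proof is correct, but your treatment of the converses and of the exhaustiveness claim takes a genuinely different route from the paper's. The paper exploits the standing hypothesis that $u$ is a (classical, hence $C^1$) positive solution on the whole interval $(R_1,R_2)$: since $Z=-r^{1+a}u^p/u^\prime$ has a continuous, nonvanishing numerator at $r_1\in(R_1,R_2)$, the divergence $Z\to+\infty$ immediately forces $u^\prime(r_1)=0$, and symmetrically $X\to+\infty$ forces $u(r_2)=0$ by continuity of $u^\prime$; the whole converse direction is two lines. You instead work entirely at the level of the dynamical system, introducing $V=1/Z$ and $W=XZ$ (the latter satisfying the clean equation $\dot W=W[(2+a)-(p-1)X]$, which indeed follows from adding the two equations of \eqref{DS+}) and deriving the explicit blow-up rates $Z\sim\lambda/(t-t_1)$, $X\sim 1/(t_2-t)$, $Z\sim c(t_2-t)^p$. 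This is more work, but it buys you quantitative asymptotics and a proof that does not presuppose regularity of $u$ across the critical radius--it would survive even if the trajectory were taken as the primary object. Two small points. First, in the converse of (ii) you say the argument runs ``in $R^-_\lambda$''; a priori you only know $X\to+\infty$, not which region the orbit lies in, but this is harmless since $\dot X$ is dominated by $X^2$ and $\dot Z/Z\sim -pX$ in both regions. Second, your closing paragraph only rules out \emph{finite-time} endpoints other than (i)--(ii); the paper also explicitly excludes blow-up in infinite time by invoking Proposition \ref{AP bounds}, which matters when $R_2=+\infty$, so you should add that citation to make the final claim complete.
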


\begin{proof}
Let us first observe that $u$ and $u'$ can never be zero at the same point $r_1$. Otherwise, by the uniqueness of the initial value problem we would have $u\equiv 0$ in a neighborhood of $r_1$, which is not possible by the strong maximum principle.

$(i)$ Assume that there exists $r_1>0$ such that $u^\prime (r_1)=0$. Thus $u(r_1)>0$ and by \eqref{X,Z a} it is easy to deduce the limits of $X(t)$ and $Z(t)$ as $t\to t_1^+$, for $t_1=\mathrm{ln}(r_1)$.
Viceversa, if $Z(t)\to +\infty$ as $t\to t_1^+$, by \eqref{X,Z a} we immediately get $u^\prime (r)\to 0$ as $r\to r_1=e^{t_1}$, because $u$ is continuous in $(R_1,R_2)$. This in turn gives that $X(t)\to 0$ as $t\to t_1^+$, and no other asymptote parallel to the $Z$ axis is admissible.

$(ii)$ Suppose that $u(r_2)=0$ for some $r_2>R_1$.
Then $u^\prime (r_2)<0$ and by \eqref{X,Z a} we easily obtain the behavior of $X$ and $Z$ as $t\to t_2^-$, where $t_2=\mathrm{ln}(r_2)$.
Viceversa if $X(t) \to +\infty$ as $t\to t_2^-$ then necessarily $u(r)\to 0$ as $r\to r_2=e^{t_2}$, because $u^\prime$ is continuous in $(R_1,R_2)$. Thus $Z(t)\to 0$ as $t\to t_2^-$ as before.

The arguments above also show that, in finite time, no other blow-ups are possible for $\tau$ in $1Q$. Indeed, as soon as $X$ or $Z$ tends to infinity, then $u$ or $u^\prime$ vanishes at a positive radius. Recall that a blow up in infinite time is not admissible by Proposition \ref{AP bounds}.
\end{proof}

\begin{corol}\label{cor change at least once}
Let $u$ be a solution of \eqref{P radial m} or \eqref{P radial m-}, and $\tau$ be a corresponding trajectory of \eqref{DS+} or \eqref{DS-} starting above the line $\ell_\pm$ in $1Q$. Then $u$ changes concavity at least once.
\end{corol}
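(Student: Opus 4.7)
The plan is to argue by contradiction: I assume $u$ never changes concavity and show this forces $\tau$ into a dynamical behavior that Propositions~\ref{AP bounds} and \ref{types of blow ups} forbid. Let $t_0$ be a time with $\tau(t_0)\in R^+_\lambda$ (for $\M^+$; the $\M^-$ case is analogous with $R^+_\Lambda$). If $u$ is concave throughout its domain, then $\tau(t)\in\overline{R^+_\lambda}$ for every $t\ge t_0$ in the forward interval of existence. By Proposition~\ref{prop flow}(1), every crossing of $\ell_+$ is transverse except possibly at $P$, and Remark~\ref{remark concavity} shows that $P$ can be reached only from $R^-_\lambda$. Under the contradiction hypothesis, $\tau$ must therefore remain strictly inside the open region $R^+_\lambda$ for all $t\ge t_0$, since any contact with $\ell_+$ would push the orbit transversely into $R^-_\lambda$, yielding an instantaneous concavity change.

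The next step exploits a sharp monotonicity available in $R^+_\lambda$. Since $Z>\lambda(N-1)$ there, one has $M_+(\lambda(N-1)-Z)=(N-1)-Z/\lambda$, and the first equation of \eqref{DS+} reads
\begin{equation*}
\dot X \,=\, X\left[\,X-(N-2)+\tfrac{Z}{\lambda}\,\right] \,>\, X(X+1) \,>\, X,
\end{equation*}
using $Z/\lambda>N-1$. Hence $X(t)\ge X(t_0)\,e^{\,t-t_0}$ as long as $\tau$ lies in $R^+_\lambda$, so $X(t)\to+\infty$ as $t$ approaches the right endpoint $T_2\in(t_0,+\infty]$ of the forward domain of definition.

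Finally, I would close the argument in either admissible scenario for $T_2$. If $T_2=+\infty$, the exponential growth of $X$ violates the bound $X<\tilde N_+-2$ from Proposition~\ref{AP bounds}. If $T_2<+\infty$, Proposition~\ref{types of blow ups} classifies the blow-up at $T_2$ as either type $(i)$, with $X(t)\to 0$ and $Z(t)\to+\infty$ (contradicting $X(t)\ge X(t_0)>0$), or type $(ii)$, with $X(t)\to+\infty$ and $Z(t)\to 0$ (contradicting $Z>\lambda(N-1)$ in $R^+_\lambda$). In both alternatives a contradiction arises, so $\tau$ must cross $\ell_+$ in finite forward time and $u$ changes concavity. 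The $\M^-$ case is identical after replacing $\lambda$ by $\Lambda$, $R^+_\lambda$ by $R^+_\Lambda$, and $\tilde N_+$ by $\tilde N_-$. I do not anticipate any real obstacle: the whole argument hinges on the one-line observation $\dot X\ge X$ in $R^+_\lambda$, which, combined with the a priori bounds of Proposition~\ref{AP bounds} and the blow-up catalogue of Proposition~\ref{types of blow ups}, leaves no room for $\tau$ to remain above the concavity line.
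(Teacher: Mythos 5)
Your proof is correct and takes essentially the same route as the paper's: the orbit is trapped in $R^+_\lambda$, where $X$ strictly increases, so it is forced to blow up, and the blow-up is incompatible with the catalogue of Proposition~\ref{types of blow ups}. The only difference is cosmetic---you exclude global forward existence via the explicit inequality $\dot X>X$ combined with Proposition~\ref{AP bounds}, whereas the paper invokes Poincar\'e--Bendixson together with the absence of stationary points and periodic orbits in $R^+_\lambda$ (Lemma~\ref{stationary M+}, Proposition~\ref{Dulac})---and your intermediate claim that $X(t)\to+\infty$ as $t\to T_2$ when $T_2<+\infty$ is not justified by the exponential bound alone, but it is also never used since that case is settled by the blow-up classification.
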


\begin{proof} Consider the $\M^+$ operator, for $\M^-$ is the same.
If $u$ never changed concavity, then $\tau=(X,Z)$ would remain inside the region $R_\lambda^+$ for all time. By Lemma \ref{stationary M+} and Proposition \ref{Dulac} there are no stationary points or periodic orbits in $R_\lambda^+$.
Recall that $\dot{X}>0$, $\dot{Z}<0$ in $R_\lambda^+$, see Figure \ref{Fig flow}.
Then $\tau$ must blow up at a finite forward time $\hat{t}$ such that $X(t)\to +\infty $ and $Z(t)\to Z_1$ as $t\to \hat{t}$, for some $Z_1>\lambda (N-1)>0$.
But this blow-up is not admissible by Proposition \ref{types of blow ups}.  
\end{proof}

\begin{rmk}[Blow up in $3Q$]
	Every orbit ${\tau}=(X,Z)$ of \eqref{DS+ 3Q} or \eqref{DS- 3Q} in $3Q$ verifies $X(t)\to 0$ and $Z(t)\to -\infty$ as $t\to t_1^-$ for some $t_1\in \real$ such that $r_1=e^{t_1}$ and $u^\prime (r_1)= 0$. Moreover, $X(t)\to -\infty$ and $Z(t)\to 0$ as $t\to t_3^+$ for some $t_3\in \real$ where $r_3=e^{t_3}$ and $u (r_3)= 0$.
\end{rmk}

\subsection{Regular solutions}\label{subsection regular}

Let us consider the following initial value problem:
\begin{align}\label{shooting}
u^{\prime\prime}\;=\; M_\pm\left( -r^{-1}(N-1)\, m_\pm(u^\prime)- r^a |u|^{p-1}u \right),
\quad u (0)=\gamma  , \;\; u^\prime (0)=0 , \qquad \gamma>0 ,
\end{align}
where $M_\pm$ and $m_\pm$ are defined in \eqref{m,M+}, \eqref{m,M-}.

By regular solution we mean a solution $u=u_p$ of \eqref{shooting} which is twice differentiable for $r>0$, and $C^1$ up to $0$.  We denote by $R_p$, with $R_p\leq +\infty$, the radius of the maximal interval $[0,R_p)$ where $u$ is positive.
 
Hence, in such interval $u$ is a solution of $(P_\pm)$. 
Obviously, if $R_p=+\infty$ then $u$ corresponds to a radial positive solution of \eqref{P} for $\Omega=\rN$.
When $R_p<+\infty$ it gives a positive solution of the Dirichlet problem \eqref{P}, \eqref{H Dirichlet} in the ball $\Omega= B_{R_p}$.

\begin{rmk}\label{rescaling}
Given a regular positive solution $u=u_p$ in $[0,R_p)$ satisfying \eqref{shooting} for some $\gamma>0$, then the rescaled function $v(r)=\tau u(\tau^{\frac{1}{\alpha}} r)$, for $\alpha$ as in \eqref{critical exponents a} and $\tau>0$, is still a positive solution of the same equation in $[0, \tau^{-\frac{1}{\alpha}}R_p)$ with initial value $v(0)=\tau \gamma$, see also \cite[Lemma 2.3]{FQaihp}.

If $u$ is defined in the whole interval $[0,+\infty)$, thus there is a family of regular solutions obtained via $v=v_\tau$, for all $\tau>0$. 
In this case we say that $u$ is \textit{unique} up to scaling.
 
On the other hand, a solution in the ball of radius $R_p$ automatically produces a solution for an arbitrary ball, by properly choosing the parameter $\tau>0$.
\end{rmk}

\begin{rmk}
Note that, by rescaling a fast decaying solution, we get infinitely many fast decaying solutions which give a different value for the constant $c$ in Definition \ref{def regular}(i), see \eqref{omega A0}. The same happens for the $(\tilde{N} - 2)$--blowing up solution in Definition \ref{def singular}(i). Instead it is easy to see that for the slow decaying solutions or $\alpha$--blowing up solutions in definitions \ref{def regular}(ii) and \ref{def singular}(ii), the constant $c$ is independent of the rescaling, see \eqref{omega M0}.
\end{rmk}

Now, using the transformation \eqref{X,Z a} our goal is to characterize the regular solutions of \eqref{P radial m} or \eqref{P radial m-} as trajectories of the dynamical systems \eqref{DS+} and \eqref{DS-} in the first quadrant.

\begin{prop}\label{prop regular N0}
Let $u=u_p$ be any positive regular solution of \eqref{P radial m} $($resp.\ \eqref{P radial m-}$)$. Then the corresponding trajectory belongs to $1Q$ and is the unique trajectory of \eqref{DS+} $($resp.\ \eqref{DS-}$)$ whose $\alpha$-limit is $N_0$.
\end{prop}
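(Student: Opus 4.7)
The plan is to identify the asymptotic behaviour of the associated orbit $\tau=(X,Z)$ as $t\to -\infty$ (equivalently $r\to 0^+$) directly from the initial data $u(0)=\gamma>0$, $u^\prime(0)=0$, and then invoke the saddle structure of $N_0$ already recorded in the propositions of Section~\ref{section local}.

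First, I would rely on Proposition \ref{prop decreasing concave} to conclude that $u$ is $C^1$ up to $r=0$ and strictly decreasing and concave in a right neighbourhood of $0$. This places the orbit in $R^+_\lambda\subset 1Q$ for $\mathcal{M}^+$ (respectively $R^+_\Lambda\subset 1Q$ for $\mathcal{M}^-$), so that \eqref{P M+ 1Q} reduces to the divergence-form identity $(r^{N-1}u^\prime(r))^\prime = -\lambda^{-1}\,r^{N-1+a}u^p(r)$ (with $\Lambda$ replacing $\lambda$ in the $\mathcal{M}^-$ case, from \eqref{P M- 1Q}). Integrating from $0$ to $r$ and exploiting $u(0)=\gamma$, $u^\prime(0)=0$ together with the integrability guaranteed by $a>-1$, I expect the leading-order expansions
\[
u(r)=\gamma + o(1),\qquad u^\prime(r)=-\frac{\gamma^p}{\lambda(N+a)}\,r^{1+a}(1+o(1))\quad\text{as }r\to 0^+,
\]
with $\lambda$ replaced by $\Lambda$ in the $\mathcal{M}^-$ case.

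Substituting these expansions into the definitions \eqref{X,Z a} yields, for $\mathcal{M}^+$,
\[
X(t)=\frac{\gamma^{p-1}}{\lambda(N+a)}\,r^{2+a}(1+o(1))\longrightarrow 0^+,\qquad Z(t)\longrightarrow \lambda(N+a),
\]
as $t=\ln r\to -\infty$, and analogously $Z(t)\to \Lambda(N+a)$ for $\mathcal{M}^-$. In both cases this shows simultaneously that the orbit lies in $1Q$ for $t$ sufficiently negative and that $\alpha(\tau)=N_0$.

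For uniqueness, Proposition \ref{local study stationary points}(2) identifies $N_0$ as a saddle for every $p>1$, whose one-dimensional unstable manifold enters $1Q$ transversely to the $Z$-axis with negative slope. Proposition \ref{local uniqueness} therefore provides a single orbit $\Gamma_p$ in $1Q$ with $\alpha$-limit $N_0$, and the orbit associated to $u$ must coincide with $\Gamma_p$; by Remark \ref{rescaling}, rescalings of $u$ correspond only to time translations along $\Gamma_p$, so they all trace out the same trajectory. The main obstacle I anticipate is making the leading-order expansion of $u^\prime$ genuinely rigorous near $r=0$ without circularity: concavity is required to apply the Laplacian-style divergence form, and this is exactly what Proposition \ref{prop decreasing concave} supplies, after which the integration is routine. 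A secondary point is checking that the unstable branch from $N_0$ actually enters $1Q$ rather than the second quadrant, which is immediate from the sign of the slope given in Proposition \ref{local study stationary points}(2).
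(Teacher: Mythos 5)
There is a circularity in your first step that the paper's own logical ordering makes fatal. You launch the argument by invoking Proposition \ref{prop decreasing concave} to get that $u$ is decreasing and concave near $r=0$, hence that the orbit lies in $R^+_\lambda$ and the equation takes the single divergence form $(r^{N-1}u')'=-\lambda^{-1}r^{N-1+a}u^p$. But in the paper Proposition \ref{prop decreasing concave} is stated and proved \emph{after} Proposition \ref{prop regular N0}, and its proof begins precisely by using the fact that $\Gamma_p$ emanates from $N_0$ into $R^+_\lambda$ --- i.e.\ it is a consequence of the statement you are trying to prove. You flag the circularity risk yourself at the end, but you resolve it by pointing back to the same proposition, which does not break the loop. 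The issue is not cosmetic: which branch of the Pucci operator is active (and hence which Laplacian-type divergence form you may integrate) depends exactly on the signs of $u'$ and $u''$ near $0$, and knowing $u''\le 0$ near $0$ is essentially equivalent to knowing $Z>\lambda(N-1)$ there, which is part of the conclusion $Z\to\lambda(N+a)$. Even the mere definability of $Z(t)$ for $t$ near $-\infty$ requires $u'\neq 0$ in a punctured neighbourhood of $0$, which is a nontrivial claim (statement \eqref{3.4F} in the paper) and cannot be borrowed from the downstream proposition.

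For comparison, the paper avoids this by working in the opposite direction: it first gets $\lim_{t\to-\infty}X(t)=0$ directly from $u\to\gamma>0$ and $ru'\to 0$ (no sign information needed); it then proves $u'\neq 0$ near $0$ by contradiction, showing that infinitely many concavity changes would force the orbit to cross $\ell_\pm$ from $R^+$ to $R^-$ infinitely often, which by Proposition \ref{prop flow}(1) can only happen at $X>\frac{1+a}{p}$, contradicting $X\to 0$; finally it bounds $Z$ via the a priori estimate \eqref{Z< lambdaN} and identifies the $\alpha$-limit as $N_0$ by excluding $O$ and $3Q$, without ever computing the limit of $Z$ explicitly for $a\neq 0$. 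Your computation of the limits (and the value $\lambda(N+a)$) is correct \emph{once} the orbit is known to lie in $R^+_\lambda$ near $t=-\infty$; to make your route rigorous you would need to first establish $u'<0$ near $0$ by a direct ODE argument from the initial value problem \eqref{shooting} (e.g.\ noting that any interior critical point with $u>0$ satisfies $u''=M_\pm(-r^au^p)<0$), and then replace the single divergence form by the two-sided differential inequalities $w/\Lambda\le u''\le w/\lambda$ with $w=-\lambda(N-1)u'/r-r^au^p$, whose integration yields $|u'|\asymp r^{1+a}$ with constants forcing $Z>\lambda(N-1)$ eventually, after which your expansion becomes legitimate. As written, the proposal does not contain these steps.
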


\begin{proof}
The proof is the same for both operators $\M^\pm$. 
The solution $u=u_p$ satisfies $\lim_{r\to 0} u(t)=u(0)=\gamma $ and $\lim_{r\to 0} u^\prime (t)= u^\prime (0)=0$, for some $\gamma>0$.
In terms of the trajectory $(X,Z)$, by the definition of $X$ in \eqref{X,Z a} we easily find
\begin{align}\label{regular X to 0}
\textstyle{\lim_{t\rightarrow -\infty}X(t)= 0.}
\end{align}
Moreover, in the simpler case when $a=0$ we have
\begin{center}
	$\lim_{t\to -\infty}Z(t)=
	\lim_{r\rightarrow 0}\frac{-r  u^{p}(r)}{u^\prime (r)} =
	- \gamma^p  \lim_{r\rightarrow 0}\,\frac{r }{u^\prime (r)-u^\prime (0)} =-\frac{ \gamma^p}{u^{\prime\prime}(0)} \in (0,+\infty)$,
\end{center}
since it is easier to check from the equation that $u^{\prime\prime}(0)<0$.
When $a\neq 0$ we need some other argument to show that $Z(t)$ has a finite limit as $t\to -\infty$.
First let us show that
\begin{align}\label{3.4F}
\textrm{there exists $R_1>0$ such that } u^\prime \neq 0 \textrm{ for all } r\in (0,R_1).
\end{align}

If this was not true, then there would exist a sequence of positive radii $r_n\to 0$ such that $u^\prime (r_n)=0$.
By the mean value theorem this yields the existence a sequence $s_n\in (r_n,r_{n+1})$ such that $u^{\prime\prime}(s_n)=0$.
Thus, since $u^\prime$ cannot be identically zero in a neighborhood of $0$ by the equation \eqref{P radial m}, then $u$ changes infinitely many times its concavity in a neighborhood of $0$.

In terms of the dynamical system, say \eqref{DS+} for $\M^+$, this means that the respective trajectory intersects the line $\ell_+$ (see \eqref{ell}) more than once as $t\to -\infty$. In particular it should pass from $R^+_\lambda$ to $R^-_\lambda$  infinitely many times, which, by Proposition \ref{prop flow} (1), may only occur at
$X(s_n)>\frac{1+a}{p}$ for $a>-1$. This contradicts the fact that $X(s_n)\to 0$ for large $n$ from \eqref{regular X to 0}. 

By \eqref{3.4F} we have that $Z(t)$ is well defined in some interval $(-\infty,\hat{t})$ so that \eqref{Z< lambdaN} in Proposition \ref{AP bounds} yields $Z(t)<\lambda (N+a)$ for all $t<\hat{t}$. Hence $\lim_{t \to -\infty}Z(t)<+\infty$. 
Note that the trajectory cannot belong to $3Q$ when blow-up in finite time occurs both backward and forward, by Proposition \ref{prop 3Q}.
Moreover, it cannot converge to $O$ by Propositions \ref{prop flow} (2) and \ref{local study stationary points}(1). Indeed, the unstable manifold at $O$ is on the $Z$ axis which cannot correspond to the solution $u$ in any interval $(0,r)$.
Hence it converges to $N_0$, independently of the initial datum $\gamma>0$.
\end{proof}

\begin{rmk}\label{remark Gammap regular}
Thus, by Propositions \ref{prop regular N0} and \ref{local uniqueness} one concludes that a regular solution $u_p$,  corresponds to the unique trajectory $\Gamma_p$ labeled in \eqref{Gammap}, for all $p>1$.
Here, $\Gamma_p$ is defined in a maximal interval $[0,T_p)$, $T_p=\mathrm{ln} R_p\leq +\infty$.
	
Note that the fact that $\Gamma_p$ does not depend on the initial datum of $u_p$ is not a surprise since we already observed that the change of initial datum is equivalent to rescaling the radius. This, in turn, is equivalent to shifting the time for the systems \eqref{DS+}, \eqref{DS-}, which does not produce any change in the trajectory since the system is autonomous.
\end{rmk}

We now prove the monotonicity and concavity properties of the regular solutions, deriving them directly by the dynamical systems \eqref{DS+} or  \eqref{DS-}, and not from the second order ODEs.

\begin{prop}\label{prop decreasing concave}
All regular solutions $u$ of \eqref{P radial m} or \eqref{P radial m-} are concave in an interval $(0,r_0)$ for some $r_0>0$ and change concavity at least once. Moreover, they are decreasing as long as they remain positive. Further, $u^\prime (r)=O(-r^{1+a})$ and $u^{\prime\prime}(r)=O(-r^a) $ as $r\to 0$, for $a>-1$.
\end{prop}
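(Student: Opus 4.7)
The plan is to read off each of the four assertions from the trajectory $\Gamma_p$ associated with the regular solution $u=u_p$ via the transformation \eqref{X,Z a}, exploiting Proposition \ref{prop regular N0} which states that $\Gamma_p$ has $\alpha$-limit at $N_0=(0,\lambda(N+a))$ for $\M^+$ (or at $(0,\Lambda(N+a))$ for $\M^-$). I will describe the argument for $\M^+$; the case of $\M^-$ is identical after swapping the roles of $\lambda$ and $\Lambda$.

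First, since $a>-1$, one has $\lambda(N+a)>\lambda(N-1)$, so the stationary point $N_0$ lies strictly above the line $\ell_+$ defined in \eqref{ell}, i.e.\ in the open region $R^+_\lambda$. By continuity of $\Gamma_p$, there is some $\hat{t}\in\real$ with $\Gamma_p(t)\in R^+_\lambda$ for every $t\leq\hat{t}$. In view of the interpretation of $R^+_\lambda$ as the concavity region (see the discussion following \eqref{R-}), this gives $u''<0$ on $(0,r_0)$ with $r_0=e^{\hat{t}}$. That $u$ changes concavity at least once is then a direct application of Corollary \ref{cor change at least once}, whose hypothesis is exactly that $\Gamma_p$ starts above $\ell_+$.

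For the monotonicity, I will combine the a priori bound of Proposition \ref{AP bounds} with the classification in Proposition \ref{types of blow ups}. Let $R_p=e^{T_p}\leq+\infty$ denote the first zero of $u$, so $\Gamma_p$ is defined on $(-\infty,T_p)$. For any $\hat{t}<T_p$ the restriction of $\Gamma_p$ to $(-\infty,\hat{t}]$ satisfies $Z(t)<\lambda(N+a)$ by Proposition \ref{AP bounds}, so $Z$ stays bounded on every compact subinterval of $(-\infty,T_p)$. Proposition \ref{types of blow ups}(i) then rules out $u'(r)=0$ for any $r\in(0,R_p)$. Combined with the concavity near $0$ and $u'(0)=0$, this forces $u'(r)<0$ throughout $(0,R_p)$.

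Finally, the asymptotic rates in (d) follow from the limits $X(t)\to 0$ and $Z(t)\to\lambda(N+a)$ as $t\to-\infty$. Since $u(r)\to\gamma>0$, reading the definition of $Z$ backwards yields
\begin{equation*}
u'(r)\sim -\frac{\gamma^p}{\lambda(N+a)}\,r^{1+a}, \qquad r\to 0^+,
\end{equation*}
and hence $u'(r)=O(-r^{1+a})$. Substituting this into \eqref{P radial m}, which on $(0,r_0]$ reduces (because $\Gamma_p\subset R^+_\lambda$ there) to the standard radial Laplacian form $u''+(N-1)u'/r=-r^a u^p/\lambda$, gives $u''(r)\sim-(1+a)\gamma^p r^a/[\lambda(N+a)]$, which is $O(-r^a)$ since $a>-1$ ensures $1+a>0$. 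The only point requiring some care throughout the plan is the interplay between the regions $R^+_\lambda$ and $R^-_\lambda$: one must apply the Laplacian reduction of $(P_+)$ only on the initial interval where $\Gamma_p$ has been shown to lie in $R^+_\lambda$, a condition that is automatic here by construction.
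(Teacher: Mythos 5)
Your proof is correct and follows essentially the same route as the paper: concavity near the origin from $\Gamma_p$ emanating from $N_0\in R^+_\lambda$, the concavity change from Corollary \ref{cor change at least once}, and the asymptotics from $\lim_{t\to-\infty}Z(t)=\lambda(N+a)$ combined with the equation, yielding exactly the paper's limits. The only (harmless) deviation is the monotonicity step, where the paper simply invokes invariance of $1Q$ under the flow while you rule out an interior zero of $u'$ via the bound $Z<\lambda(N+a)$ of Proposition \ref{AP bounds} together with Proposition \ref{types of blow ups}(i) — a slightly more explicit justification of the same fact.
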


\begin{proof} Let us consider the $\M^+$ case; for $\M^-$ is the same. 	
By Proposition \ref{prop regular N0} the corresponding trajectory $\Gamma_p$ starts at $-\infty$ from the stationary point $N_0$ and enters the region $R_\lambda^+$ which is above the concavity line $\ell_+$; see Proposition \ref{local study stationary points} (item 2) and Remark \ref{graph}.
Then we immediately deduce that $u$ is concave near $r=0$ and changes concavity at least once; see Corollary \ref{cor change at least once}. Next, since $1Q$ is invariant by the flow and corresponds to positive decreasing solutions of \eqref{P radial m} we get the monotonicity claim.
For the asymptotics one computes
\begin{center}
	$- \gamma^p  \lim_{r\rightarrow 0}\,\frac{r^{1+a}  }{u^{\prime}(r)}=	\lim_{t\rightarrow -\infty}Z(t)
	=\lambda(N + a) $, from which
	$\lim_{r\rightarrow 0} \frac{u^{\prime\prime}}{r^a}
	= -\frac{\gamma^p}{\lambda} \frac{a+1}{N+a}$. 
\end{center}
\vspace{-0.6cm}
\end{proof}

Given an exponent $p>1$, for a regular solution $u_p$ of \eqref{P radial m} or \eqref{P radial m-}, 
which is positive in $[0,+\infty)$, Definition \ref{def regular} holds according to its behavior at infinity.
Taking into account that $u_p$ is unique, up to rescaling, as in \cite{FQaihp} we define the following sets:
\begin{align}\label{def F,S,P}
\F \,&=\,\{\, p>1 :\; u_p \;\textrm{ is fast decaying}\, \};\nonumber\\
\Sl \,&=\,\{\, p>1 :\; u_p \;\textrm{ is slow decaying}\, \};\\
\Ps \,&=\,\{\, p>1 :\; u_p \;\textrm{ is pseudo-slow decaying}\, \}.\nonumber
\end{align}
Then we add the set
\vspace{-0.16cm}
\begin{align}\label{def C}
\C\,=\,\{\, p>1 :\; \textrm{$(P_\pm)$ has a solution $u_p$ with $u_p(R_p)=0$\,} \, \},
\end{align}
where, as before, $R_p$ is the radius of the maximal positivity interval for $u_p$.
We characterize the previous sets in terms of the orbits of the dynamical systems \eqref{DS+} or \eqref{DS-}.

\begin{prop}\label{prop classifi regular}
In terms of the dynamical systems \eqref{DS+} or \eqref{DS-}, the previous sets can be equivalently defined as follows:
\begin{align}\label{def F,S,P,C DS}
\F \,&=\,\{\, p>1  :\; \omega(\Gamma_p)=A_0\, \};\quad
\Sl \,=\,\{\, p>1 : \;  \; \omega(\Gamma_p)=M_0 \, \};\nonumber\\
\Ps \,&=\,\{\, p>1 : \;  \;\textrm{$\omega(\Gamma_p)$ is a periodic orbit around $M_0$}  \, \};\\
\C\,&=\,\{\, p>1: \; \textrm{$\textstyle{\lim_{t\to T} X(t)=+\infty}$ and  $ \textstyle{\lim_{t\to T} Z(t)=0}$ for some $T>0$}\, \},\nonumber
\end{align}
where $\Gamma_p(t)=(X(t),Z(t))$ is as in Remark \ref{remark Gammap regular}. 
In particular, $(1,+\infty)=\mathcal{C}\cup\mathcal{F}\cup\mathcal{P}\cup\mathcal{S}$.
\end{prop}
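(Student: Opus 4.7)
The plan is to match each asymptotic regime for $u_p$ listed in Definition~\ref{def regular} and in~\eqref{def C} with the corresponding $\omega$-limit of the trajectory $\Gamma_p$ of Proposition~\ref{prop regular N0}, using as dictionary the explicit relation $r^{\alpha}u_p(r)=(XZ)^{1/(p-1)}(t)$ provided by~\eqref{def u via X,Z}.

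I would first classify the possible $\omega(\Gamma_p)$. Let $[0,T_p)$ be the maximal forward existence interval of $\Gamma_p$. If $T_p<+\infty$, Proposition~\ref{types of blow ups} leaves only two blow-up patterns; the one with $Z\to+\infty$ and $X\to 0$ is excluded because, while $u_p>0$ and $u_p'<0$ (Proposition~\ref{prop decreasing concave}), $\Gamma_p$ lies in the open first quadrant and the $Z$-axis is invariant. Hence $X(t)\to+\infty$ and $Z(t)\to 0$ as $t\to T_p$, which is exactly the defining condition of $\C$. If instead $T_p=+\infty$, Proposition~\ref{AP bounds} confines $\Gamma_p$ to a compact box of $1Q$, so by Poincaré-Bendixson $\omega(\Gamma_p)$ is either a stationary point or a periodic orbit. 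Among the stationary points in $\overline{1Q}$, $O$ and $N_0$ are excluded because their stable manifolds coincide with the coordinate axes (Proposition~\ref{local study stationary points}(1)-(2)) while $\Gamma_p$ lies in the open interior; any periodic $\omega$-limit must enclose a fixed point and hence encircle $M_0$ (the only candidate in the interior). Thus $\omega(\Gamma_p)\in\{A_0,M_0\}$ or is a periodic orbit around $M_0$.

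Next, I would read off the decay regime from each limit. If $\omega(\Gamma_p)=M_0=(\alpha,Z_0)$, then $r^{\alpha}u_p(r)\to(\alpha Z_0)^{1/(p-1)}>0$, so $p\in\Sl$. If $\omega(\Gamma_p)$ is a periodic orbit $\theta$, then the continuous function $XZ$ attains on $\theta$ distinct positive extrema $d_1<d_2$, so $r^{\alpha}u_p$ oscillates between $d_1^{1/(p-1)}$ and $d_2^{1/(p-1)}$, yielding $p\in\Ps$. The case $\omega(\Gamma_p)=A_0=(\tilde N-2,0)$ requires more work. Writing $\frac{d}{dt}\log u_p=-X(t)$ and integrating gives
\begin{align*}
\log\bigl(r^{\tilde N-2}u_p(r)\bigr)-\log\bigl(r_0^{\tilde N-2}u_p(r_0)\bigr)=-\int_{t_0}^{t}\bigl(X(s)-(\tilde N-2)\bigr)\,ds.
\end{align*}
Since $\omega(\Gamma_p)=A_0$ forces $p>p^{s,a}_\pm$ (otherwise $A_0$ is a source by Proposition~\ref{local study stationary points}(3) and cannot be an $\omega$-limit), $A_0$ is a hyperbolic saddle, and Proposition~\ref{prop GH} gives exponential approach along its local stable manifold; the integrand above is then integrable in $s$ and $r^{\tilde N-2}u_p(r)\to c>0$, i.e.\ $p\in\F$.

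The reverse implications and the exhaustion of $(1,+\infty)$ follow by elimination: the four regimes $\C,\F,\Sl,\Ps$ are mutually exclusive by definition, and combined with the classification of the first step each of the three implications just proved is an equivalence (e.g.\ $u_p$ slow decaying excludes $\C,\F,\Ps$, hence forces $\omega(\Gamma_p)=M_0$); this simultaneously yields $(1,+\infty)=\C\cup\F\cup\Sl\cup\Ps$. The main obstacle is the quantitative upgrade for $\omega(\Gamma_p)=A_0$: a bare qualitative limit $(X,Z)\to A_0$ does not produce by itself the positive multiplicative constant $c$ in $u_p\sim c\,r^{-(\tilde N-2)}$; what rescues the argument is precisely the hyperbolic saddle structure of $A_0$, which forces the exponential decay of $X(t)-(\tilde N-2)$ needed for integrability. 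The rest of the argument rides comfortably on Poincaré-Bendixson and the a priori bounds of Section~\ref{section AP bounds}.
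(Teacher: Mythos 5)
Your proposal is correct and follows essentially the same route as the paper: the a priori bounds of Proposition \ref{AP bounds} plus Poincaré--Bendixson to reduce $\omega(\Gamma_p)$ to $A_0$, $M_0$, a periodic orbit around $M_0$, or finite-time blow-up, and then the dictionary $r^{\alpha}u_p=(XZ)^{1/(p-1)}$ to translate each case into the decay regimes of Definition \ref{def regular} and \eqref{def C}. Your treatment of the fast-decay case is in fact slightly more careful than the paper's, which passes from $X(t)\to\tilde N_\pm-2$ to the existence of the positive constant $c$ by a bare integration, whereas you correctly observe that the integrability of $X(t)-(\tilde N_\pm-2)$ — guaranteed by the exponential approach along the stable manifold of the hyperbolic saddle $A_0$ — is what actually produces it.
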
 

\begin{proof}
The proof is the same for both operators $\M^\pm$, i.e.\ for both systems \eqref{DS+} and \eqref{DS-}.

In the case of the sets $\F,\Sl$ and $\Ps$, $u_p$ (as in Proposition \ref{prop regular N0}) is positive in $(0,+\infty)$ which implies that $\Gamma_p$ is defined for all $t\in\real$. By Proposition \ref{AP bounds} this trajectory is bounded, and so by Poincaré-Bendixson theorem it converges as $r\to+\infty$ either to a stationary point or to a periodic orbit.
In the first case only $A_0$ and $M_0$ are admissible. Moreover, via the transformation \eqref{X,Z a},
\begin{align}\label{omega A0}
\omega (\Gamma_p)=A_0 \;\;\Leftrightarrow\;\; \lim_{r\to +\infty}u(r)r^{\tilde{N}_\pm-2}=C_{p,\gamma} \quad\textrm{ for some $C_{p,\gamma} >0$},
\end{align}
and $u=u_p$ has fast decay at $+\infty$. On the other hand $u$ is slow decaying at $+\infty$ when
\begin{align}\label{omega M0}
\omega (\Gamma_p)=M_0 \;\;\Leftrightarrow\;\; \lim_{r\to +\infty}u(r)r^\alpha=C_p \quad\textrm{ with $C_p =(X_0 Z_0)^{\frac{1}{p-1}}$},
\end{align}
where $M_0=(X_0,Z_0)$ is given explicitly in Lemma \ref{stationary M+}.

\smallskip

Indeed, \eqref{omega M0} comes from the identity $X(t)Z(t)=r^{2+a} u^{p-1}$ for all $t\in \real$.
On the other hand,
\begin{center}
$\lim_{t \to +\infty} X(t)= \tilde{N}_\pm -2$\;\, is equivalent to\; 
$\frac{\rmd}{\rmd r} \mathrm{ln}(u(r)) = \frac{u^\prime(r)}{u(r)} \sim -\frac{\tilde{N}_\pm-2}{r}$ as $r\to +\infty$.
\end{center}
Then, integration in $[r_0,r]$ for a fixed large $r_0$ implies \eqref{omega A0} with $C_{p,\gamma}=u(r_0)r_0^{\tilde{N}_\pm-2}$, $u=u_{p,\gamma}$.
Now, by rescaling, the function $v=v_\tau$ in Remark \ref{rescaling} satisfies 
\begin{center}
$\lim_{t\to +\infty}v(r) r^{\tilde{N}_\pm-2}=\tau^{1-\frac{\tilde{N}_\pm-2}{\alpha}}C_{p,\gamma}$ under \eqref{omega A0};\;\, $\lim_{r\to +\infty}v(r)r^\alpha=C_p$ under \eqref{omega M0}.
\end{center}
Thus $C_p$ is independent of the initial value $\gamma>0$ in \eqref{shooting}.

Finally, assume that $\omega(\Gamma_p)$ is a periodic orbit $\theta$. Note that the region inside $\theta$ is bounded, and by Poincaré-Bendixson theorem it must contain $M_0$. Using again $XZ=r^{2+a}u^{p-1}$ one defines
\begin{center}
$c_1^{p-1}:=\inf_{t\in \real} \,\{ X(t)Z(t): (X,Z)\in \theta\}$; \;\; $c_2^{p-1}=\sup_{t\in \real} \,\{ X(t)Z(t): (X,Z)\in \theta\}$.
\end{center}
Therefore we deduce 
\begin{align}\label{omega theta}
\textstyle{\omega (\Gamma_p)=\theta \;\;\Leftrightarrow\;\; 0<c_1=\liminf_{r\to +\infty}u(r)r^\alpha<\limsup_{r\to +\infty}u(r)r^\alpha=c_2.}
\end{align}

\smallskip

Now we consider the set $\C$. The corresponding trajectory $\Gamma_p$ cannot be defined for all time since $u(R_p)=0$. So it must blow up at the finite time $T_p=\mathrm{ln}(R_p)$ by Proposition \ref{types of blow ups}.

Viceversa if $\F,\Sl,\Ps,\C$ are defined in terms of the property of the trajectory $\Gamma_p$ of the dynamical system then they give exactly the same sets as defined for $u_p$, by using \eqref{X,Z a} and \eqref{def u via X,Z}, and arguing in a similar way.
\end{proof}

\begin{rmk}[$\C$ is open]\label{C is open}
	When $p\in \C$, the trajectory $\Gamma_p$ crosses the line $L=\{(X,Z), X=\tilde{N}_+-2\}$ and next blows up in finite time. This property is preserved for $p^\prime$ close to $p$. 
\end{rmk}

\subsection{Singular solutions}\label{subsection singular}

As mentioned in Section \ref{Introduction}, by singular solution we mean a radial solution $u$ of \eqref{P} satisfying \eqref{H singular}. Hence $u=u(|x|)=u(r)$ is a singular solution of $(P_\pm)$ satisfying $\lim_{r\to 0} u(r) = +\infty$.
It may be either positive for all $r\in (0,+\infty)$, or be equal to zero at a certain radius $R>0$. 
In the latter case it produces a solution in $B_R\setminus\{0\}$.

In terms of the systems \eqref{DS+}, \eqref{DS-}, this means that the corresponding trajectory, say $\Sigma_p$, will be defined either in $\real$ or in an interval $(-\infty,T)$, for some $T<\infty$.
Under the latter, as in the characterization of $\C$ in \eqref{def F,S,P,C DS} we have that $\Sigma_p$ blows up forward in a finite time $T<+\infty$.
Otherwise, by Proposition \ref{AP bounds} the global trajectory $\Sigma_p$ is contained in the box
\begin{center}
 $\mathcal{Q}^+=(0,\tilde{N}_+-2)\times (0,\lambda (N+a))$ for $\M^+$; \;
$\mathcal{Q}^-=(0,\tilde{N}_- -2)\times (0,\Lambda (N+a))$ for $\M^-$.
\end{center}
Then the $\alpha$ and $\omega$ limits can be either a periodic orbit or a stationary point.

We point out that $\Sigma_p$ cannot converge to $N_0$, neither backward nor in forward time, because the stable direction at $N_0$ is the $Z$ axis, while the unstable direction corresponds to the regular trajectory $\Gamma_p$, for all $p>1$. So all possible $\alpha$ and $\omega$ limits of $\Sigma_p$ are $M_0,A_0$, or a periodic orbit.

By the analysis of the stationary points $M_0$ and $A_0$, and of the periodic orbits given in Section~\ref{section periodic}, the $\alpha$ and $\omega$ limits of $\Sigma_p$ depend on the exponent $p$. 
Then a classification of the singular solutions, according to Definition \ref{def singular} can be easily formulated in terms of the dynamical systems \eqref{DS+},  \eqref{DS-}, analogously to Proposition \ref{prop classifi regular}. Obviously if $\Sigma_p$ is defined in $\real$ they are also classified according to the behavior at $+\infty$, as in Definition \ref{def regular}.
Here we just emphasize that, as for the regular solutions, the so called pseudo--blowing up solutions, see Definition \ref{def singular} (iii), may only occur at the values of $p$ for which $\Sigma_p$ has a periodic orbit as $\alpha$-limit.

\subsection{Exterior domain solutions}\label{subsection exterior}

By exterior domain solution we mean a solution $u$ of \eqref{P radial m} or \eqref{P radial m-} defined in an interval $[\rho_0,\rho]$, for $\rho_0\in (0, +\infty)$ and $\rho\leq +\infty$, and verifying the Dirichlet condition $u(\rho_0)=0$. Fixing $\rho_0=1$, then $u$ satisfies the initial value problem
\begin{align}\label{shooting derivative}
u(1)=0, \qquad u^\prime (1)=\delta, \qquad \textrm{ for some }\; \delta >0.
\end{align}

The equations \eqref{P radial m}, \eqref{P radial m-}, together with \eqref{shooting derivative} were studied in \cite{GILexterior2019} and \cite{GLPradial}. 
It was shown that for any $p>1$ and for each $\delta>0$ there exists a unique solution $u=u_\delta$ defined in a maximal interval $(1, \rho_\delta)$ where $u$ is positive, $1<\rho_\delta\leq +\infty$.
Moreover, there exists a unique $\mu_\delta\in (1,\rho_\delta)$ such that
\begin{align}\label{mu delta}
	u^\prime_\delta(r)>0 \;\;\; \hbox{for } r< \mu_\delta\, ,\qquad u^\prime_\delta(\mu_\delta )=0\, ,\qquad  u^\prime_\delta(r)<0 \;\;\; \hbox{for } r>\mu_\delta\, .
\end{align}
If $\rho_\delta=+\infty$ we get a positive radial solution in the exterior of the ball $B_1$. In the second case, a positive solution in the annulus $(1,\rho_\delta)$ is produced.
Note that equations \eqref{P radial m}, \eqref{P radial m-} together with \eqref{shooting derivative} are not invariant by rescaling. 

Now we would like to describe the trajectories of the dynamical systems \eqref{DS+}-\eqref{DS-} which correspond to $u_\delta$ through the variables $X,Z$ in \eqref{X,Z a}.

\begin{prop}\label{prop exterior}
Let $p>1$ and $u_\delta=u_{\delta,p}$ be a positive solution of \eqref{P radial m} $($resp.\ \eqref{P radial m-}$)$ satisfying \eqref{shooting derivative}. Then there exists a unique trajectory $\varXi_{\delta,p}$ in $1Q$ for the system \eqref{DS+} $($resp.\ \eqref{DS-}$)$ which blows up backward in a finite time $t_\delta$. More precisely, if $\varXi_{\delta,p}(t)=(X(t),Z(t))$ then 
\begin{align}\label{blow up exterior 1Q}
\textstyle{ \lim_{t\to t_\delta^+} Z(t)=+\infty \textrm{ \;and \;} \lim_{t\to t_\delta^+} X(t)=0 },
\end{align} 
where $t_\delta=\mathrm{ln}(\mu_\delta)$, $\mu_\delta$ given in \eqref{mu delta}. The trajectory $\varXi_{\delta,p}$ corresponds, after the transformation \eqref{X,Z a} to the restriction of $u_\delta$ to the interval $I_\delta=(\mu_\delta,\rho_\delta)$, $\rho_\delta\leq +\infty$.
\end{prop}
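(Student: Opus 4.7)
The plan is to transport the behavior of $u_\delta$ near $r=\mu_\delta$ directly into the $(X,Z)$ variables. First I would note that on the interval $I_\delta=(\mu_\delta,\rho_\delta)$ recalled in \eqref{mu delta} we have $u_\delta>0$ and $u_\delta'<0$, so the transformation \eqref{X,Z a} is well defined and produces a curve $\varXi_{\delta,p}(t)=(X(t),Z(t))$ lying in $1Q$. By the computation at the end of Section~\ref{section 2.1}, $\varXi_{\delta,p}$ solves \eqref{DS+} (resp.\ \eqref{DS-}) on the time interval $(t_\delta,T_\delta)$ with $t_\delta=\ln\mu_\delta$ and $T_\delta=\ln\rho_\delta\leq+\infty$.

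Next I would verify the asserted blow-up \eqref{blow up exterior 1Q} by simply passing to the limit $r\to\mu_\delta^+$ in \eqref{X,Z a}. Since $u_\delta$ is continuous with $u_\delta(\mu_\delta)>0$, while $u_\delta'(\mu_\delta)=0$ and $u_\delta'(r)<0$ for $r$ slightly larger than $\mu_\delta$, the expressions $X=-ru_\delta'/u_\delta$ and $Z=-r^{1+a}u_\delta^p/u_\delta'$ give $X(t)\to 0$ and $Z(t)\to+\infty$ as $t\to t_\delta^+$; this is precisely the blow-up type identified in Proposition~\ref{types of blow ups}(i). Note that $t_\delta=\ln\mu_\delta$ is finite since $\mu_\delta\in(1,\rho_\delta)$.

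For the uniqueness I would argue on two levels. On one hand, the cited shooting analysis of \cite{GILexterior2019, GLPradial} delivers a unique $u_\delta$ for each $\delta>0$, so the map $u_\delta\mapsto\varXi_{\delta,p}$ produced by the transformation is single valued. On the other hand, given any trajectory in $1Q$ of \eqref{DS+} (resp.\ \eqref{DS-}) with the backward blow-up \eqref{blow up exterior 1Q} at some finite $t_\delta$, the inverse transformation \eqref{def u via X,Z} reconstructs a radial solution of \eqref{P radial m} (resp.\ \eqref{P radial m-}) on $(\mu_\delta,\rho_\delta)$; the time-translation ambiguity intrinsic to the autonomous system is then pinned down by the normalization $u_\delta(1)=0$, $u_\delta'(1)=\delta$. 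I do not anticipate a real obstacle here: the proposition is essentially the translation of \eqref{mu delta} into $(X,Z)$ coordinates, and the only delicate point, the finiteness of $t_\delta$, is immediate from $\mu_\delta>1$.
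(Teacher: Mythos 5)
Your proposal is correct and follows essentially the same route as the paper: both arguments restrict $u_\delta$ to the interval $I_\delta$ where it is positive and decreasing, apply the transformation \eqref{X,Z a} to obtain a trajectory in $1Q$, and identify the backward blow-up at $t_\delta=\ln\mu_\delta$ via the blow-up classification of Proposition \ref{types of blow ups}(i). The paper's proof is just a more condensed version of what you wrote.
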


\begin{proof}
The proof works for both operators $\M^\pm$. 
We fix $p$ and $\delta$. By \eqref{mu delta}, $u_\delta$ is positive and decreasing in the interval $I_\delta$. Then, after \eqref{X,Z a}, to $u_\delta$ corresponds a unique trajectory $\varXi_\delta =\varXi_{\delta ,p}$ contained to $1Q$ and is defined for all $t\in (t_\delta, \mathrm{ln}(\rho_\delta))$. Thus, by Proposition \ref{types of blow ups} we get \eqref{blow up exterior 1Q}.
\end{proof}

When $\rho_\delta=+\infty$ we can classify the solutions accordingly to their behavior at $+\infty$, i.e.\ $u_\delta$ is fast, slow, or pseudo-slow decaying via the limits (i)-(iii) as $r\to +\infty$ in Definition \ref{def regular}.

\section{The $\mathcal{M}^+$ case}\label{section Pucci+}

In this section we study the solutions of the equations involving the Pucci $\M^+$ operator. 
Hence we refer to the dimension-like parameter $\tilde{N}_+$ and the relevant exponents for $\M^+$ defined in \eqref{def dimensional-like} and \eqref{critical exponents a}, as well as their ordering:
\,
$
\max \{\,p^{s,a}_+ ,\, p^a_\Delta\} \leq p_+^{p,a} .
$

\subsection{Some properties of regular trajectories}\label{section crossing}
We first consider the case of a regular solution of \eqref{P radial m} whose corresponding trajectory for the system \eqref{DS+} will be denoted by $\Gamma_p=\Gamma_p(t)$ as in Section \ref{subsection regular}. We also keep the other notations already introduced, in particular for the sets $\F,\Sl, \Ps, \C$ defined in \eqref{def F,S,P}, \eqref{def C}, and Proposition \ref{prop classifi regular}.

\begin{lem}\label{lem l1, l2 for X>alpha}
For any $p>1$, with $\Gamma_p=(X_p,Z_p)$, we have:
\begin{enumerate}[(i)]
\item if $\Gamma_p$ reaches the line $\ell_1^+$ $($see \eqref{ell 1+}$)$ at some $t_0$ with $X_p(t_0)\geq \alpha$, then $p\in \Sl \cup \Ps$, i.e.\ the corresponding solutions $u_p$ of \eqref{P radial m} are either slow decaying or pseudo-slow decaying. In the latter case $\Gamma_p$ crosses $\ell_1^+$ and $\ell_2^+$ $($see \eqref{ell 2+}$)$ infinitely many times;
	
\item if $\Gamma_p$ does not intersect the line $\ell_1^+$, then it intersects the concavity line $\ell_+$ exactly once. Moreover, $\dot{X}_p>0$ and $\dot{Z}_p< 0$ for all time. In particular this happens for $p\in \mathcal{F}\cup \mathcal{C}$.
\end{enumerate}
\end{lem}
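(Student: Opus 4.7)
The approach relies on the flow directions established in Propositions \ref{prop flow} and \ref{local study stationary points} together with a Poincar\'e--Bendixson argument. Recall that $\Gamma_p$ leaves $N_0=(0,\lambda(N+a))$ along the unstable eigenvector of slope $-p\lambda(N+a)/(N+2+2a)$, so that near $t=-\infty$ the orbit lies strictly above $\ell_1^+$ (since $\lambda(N+a)>\Lambda(\tilde N_+-2)$) and above $\ell_{2+}^+$, with $\dot X_p>0$ and $\dot Z_p<0$.

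For (i), since $M_0=(\alpha,Z_0)$ is the only point of $\ell_1^+$ with $X=\alpha$ (it is the intersection $\ell_1^+\cap\ell_{2-}^+$), the case $X_p(t_0)=\alpha$ would force $\Gamma_p(t_0)=M_0$ and, by ODE uniqueness, $\Gamma_p\equiv M_0$, contradicting $\alpha(\Gamma_p)=N_0$. Hence $X_p(t_0)>\alpha$, and Proposition \ref{prop flow}(3) shows that the field on $\ell_1^+$ at $X>\alpha$ points vertically downward, so $\Gamma_p$ crosses $\ell_1^+$ transversally from above to below at $t_0$. The heart of (i) is then to show that $\Gamma_p$ stays bounded for all $t\geq t_0$. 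Assuming instead a forward blow-up, Proposition \ref{types of blow ups}(ii) would give $X_p\to+\infty$; but just after $t_0$ one has $\dot X_p<0$, so any escape towards $X=+\infty$ must be preceded by a second crossing of $\ell_1^+$ from below, necessarily at $X<\alpha$ by Proposition \ref{prop flow}(3). A careful tracking of such re-crossings, using the downward orientation of the field on the right half of $\ell_1^+$ and the upward one on the left half, traps the orbit in a compact pocket around $M_0$ bounded by $\ell_1^+$, $\ell_2^+$ and the coordinate axes, excluding any escape. Once boundedness is in hand, Proposition \ref{AP bounds} confines the trajectory to the box $(0,\tilde N_+-2)\times(0,\lambda(N+a))$ and Poincar\'e--Bendixson forces $\omega(\Gamma_p)$ to be a stationary point or a periodic orbit. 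The limit $N_0$ is ruled out since it is the $\alpha$-limit, and $A_0$ is excluded because its stable direction, computed in Proposition \ref{local study stationary points}(3), is strictly steeper than $\ell_1^+$ for $p>p^{s,a}_+$, so its stable manifold approaches $A_0$ from above $\ell_1^+$ whereas our trajectory has already passed below. Thus $\omega(\Gamma_p)=M_0$ giving $p\in\Sl$, or $\omega(\Gamma_p)$ is a periodic orbit surrounding $M_0$; in the latter case the cycle crosses $\ell_1^+$ and $\ell_2^+$ twice each per period, and so $\Gamma_p$ meets both lines infinitely often, yielding $p\in\Ps$.

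For (ii), assume $\Gamma_p\cap\ell_1^+=\emptyset$. Since $\dot X_p=0$ in $1Q$ only on $\ell_1^+$ (the orbit never touches the $Z$-axis), and $\dot X_p>0$ initially, continuity gives $\dot X_p>0$ throughout. To prove $\dot Z_p<0$, the plan is to introduce the auxiliary functions $g:=Z+p\lambda X$ in $R_\lambda^+$ and $h:=Z+p\Lambda X$ in $R_\lambda^-$, so that $\ell_{2+}^+=\{g=\lambda(N+a)\}$ and $\ell_{2-}^+=\{h=\Lambda(\tilde N_++a)\}$. A direct computation along each segment yields $\dot g|_{\ell_{2+}^+}=p\lambda(p-1)X(\alpha-X)$ and $\dot h|_{\ell_{2-}^+}=p\Lambda(p-1)X(\alpha-X)$. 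Since $\ell_{2+}^+\subset\{X\leq (1+a)/p<\alpha\}$, we have $\dot g>0$ on $\ell_{2+}^+$; together with $g>\lambda(N+a)$ near $N_0$, this prevents the orbit from ever descending through $\ell_{2+}^+$. When $\Gamma_p$ crosses $\ell_+$ into $R_\lambda^-$ at some $X^*>(1+a)/p$, the inequality $\lambda(N-1)>\Lambda(\tilde N_++a-pX^*)$ places the orbit above $\ell_{2-}^+$; the sign of $\dot h$ on $\{\ell_{2-}^+,\,X<\alpha\}$ keeps it above while $X<\alpha$, and for $X\geq\alpha$ the ordering $\ell_1^+$ above $\ell_{2-}^+$ (two lines meeting at $M_0$ with slopes $-\Lambda$ and $-p\Lambda$) combined with the standing hypothesis keeps $\Gamma_p$ above $\ell_{2-}^+$. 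Hence $\dot Z_p<0$ for all $t$, so $Z_p$ is strictly decreasing and Corollary \ref{cor change at least once} then gives a unique intersection with $\ell_+=\{Z=\lambda(N-1)\}$.

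For the concluding ``in particular $p\in\F\cup\C$'' claim I would argue by contrapositive: if $\Gamma_p$ did meet $\ell_1^+$, let $t_0$ be the first such instant. Since $\Gamma_p$ starts above $\ell_1^+$, the crossing goes from above to below; evaluating $\ddot X_p$ on $\ell_1^+$ gives $\ddot X_p=X_p\dot Z_p/\Lambda$, and the local-max condition for $X_p$ at $t_0$ forces $\dot Z_p(t_0)<0$, which by Proposition \ref{prop flow}(3) requires $X_p(t_0)>\alpha$; part (i) then gives $p\in\Sl\cup\Ps$, contradicting $p\in\F\cup\C$. The step I expect to be the main obstacle is the confinement argument inside part (i): proving rigorously that an orbit which has already dropped below $\ell_1^+$ cannot escape unboundedly, and must remain in a compact invariant region around $M_0$. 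The cleanest route seems to be to rule out exits through the axes via Proposition \ref{types of blow ups} and exits via further crossings of $\ell_1^+$ by re-examining the sign of $\ddot X_p$ at each candidate re-crossing, so that Poincar\'e--Bendixson then forces the dichotomy between convergence to $M_0$ and a periodic orbit around it.
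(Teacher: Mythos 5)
Your overall strategy is the same as the paper's: flow directions on $\ell_1^+$ and $\ell_2^+$ from Proposition \ref{prop flow}(3), a trapping argument plus Poincar\'e--Bendixson for (i), and transversality of crossings of $\ell_2^+$ for (ii). Part (ii) and the concluding assertion are essentially right, and your auxiliary functions $g=Z+p\lambda X$, $h=Z+p\Lambda X$ make explicit what the paper only states verbally (that $\ell_2^+$ can be crossed transversally only from left to right, hence only at $X<\alpha$, which is unreachable without first crossing $\ell_1^+$). Two small repairs there: the ``local-max condition for $X_p$'' at the first touching time is not quite the right invocation (a tangential touch need not be a local max of $X_p$); the clean statement is that the signed distance $Z+\Lambda X-\Lambda(\tilde N_+-2)$ to $\ell_1^+$ is positive before $t_0$ and vanishes at $t_0$, so its derivative $\dot Z_p(t_0)$ is $\leq 0$, forcing $X_p(t_0)\geq\alpha$ by Proposition \ref{prop flow}(3). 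Likewise, excluding $\omega(\Gamma_p)=N_0$ ``since it is the $\alpha$-limit'' is not by itself a reason (homoclinic loops are not excluded a priori); the correct reason is that the stable manifold of $N_0$ is the invariant $Z$-axis, which no interior trajectory can approach tangentially.

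The genuine gap is exactly where you flagged it: the confinement in (i). The region you propose, ``bounded by $\ell_1^+$, $\ell_2^+$ and the coordinate axes,'' is not positively invariant and cannot work: the field on $\ell_1^+\cap\{X<\alpha\}$ points \emph{upward}, i.e.\ out of anything lying below $\ell_1^+$, and $\ell_2^+$ is crossed transversally by the flow, so neither line is a barrier on its own; tracking successive re-crossings will not close the argument without an additional ingredient. The missing idea --- the one the paper uses --- is that the barrier is the orbit's own past. Let $P_0=\Gamma_p(t_0)$ be the first crossing of $\ell_1^+$, with $X_p(t_0)>\alpha$. The Jordan curve formed by the arc $\Gamma_p\big((-\infty,t_0]\big)$ from $N_0$ to $P_0$, the segment of $\ell_1^+$ from $P_0$ to $A_0$, the $X$ axis from $A_0$ to $O$ and the $Z$ axis from $O$ to $N_0$ encloses a bounded region $D$: the past arc cannot be re-crossed by uniqueness of the ODE, the two axes are invariant, and on the $\ell_1^+$ segment one has $X>X_p(t_0)>\alpha$, so by Proposition \ref{prop flow}(3) the field points vertically downward, i.e.\ \emph{into} $D$. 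Hence $\Gamma_p(t)\in D$ for all $t>t_0$ with no case analysis of re-crossings, Proposition \ref{AP bounds} is not even needed for boundedness, and Poincar\'e--Bendixson applies; $A_0$ is then excluded because its stable manifold approaches from above $\ell_1^+$, which near $A_0$ lies outside $D$. With this replacement your proof of (i) closes and coincides with the paper's.
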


\begin{proof}
We recall that $\Gamma_p$ starts at $-\infty$ from the stationary point $N_0$ and must cross the concavity line $\ell_+$ at least once, see Proposition \ref{prop decreasing concave}.  

$(i)$ If $\Gamma_p$ reaches $\ell_1^+$ for $X_p(t_0)= \alpha$ then clearly $\lim_{t\to +\infty} \Gamma_p (t)=M_0$, whenever $M_0$ belongs to $1Q$ (see Fig.~\ref{Fig flow}). If instead $X_p(t_0)>\alpha$, by taking into account Proposition \ref{prop flow} (3) (see again Fig.~\ref{Fig flow}) and that $\Gamma_p$ cannot self intersect, we have that $\Gamma_p$ is contained in a bounded region from which it cannot leave.
Thus, by Poincaré-Bendixson theorem the $\omega$-limit of $\Gamma_p$ is either $M_0$ or a periodic orbit $\theta$ which contains $M_0$ in its interior. In the latter case $\Gamma_p$ goes around $\theta$ clockwisely according to the direction of the vector field, intersecting $\ell_1^+$ and $\ell_2^+$ infinitely many times.

$(ii)$ If $\Gamma_p$ does not intersect $\ell_1^+$ then it cannot turn back and cross the concavity line $\ell_+$ another time because of the direction of the flow.	
Moreover, it can neither intersect nor be tangent to the line $\ell_2^+$ where $\dot{Z}=0$, since a $C^1$ trajectory of \eqref{DS+} may only intersect the line $\ell^+_2$ transversely by passing from left to right, see Proposition \ref{prop flow} (3) and Fig.~\ref{Fig flow}.
This fact and item $(i)$ conclude the final assertion. 
\end{proof}

The next proposition is crucial to study the behavior of $\Gamma_p$ for different values of $p$.

\begin{prop}\label{prop crossing Gammap}
Assume that $p_1\in \mathcal{F}\cup \mathcal{C}$, and let $\Gamma_{p_2}$ be any regular trajectory with $p_2\neq p_1$. Then $\Gamma_{p_1}$ and $\Gamma_{p_2}$ can never intersect.
\end{prop}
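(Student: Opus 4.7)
My plan is to exploit a key structural feature of the system \eqref{DS+}: the right-hand side in $\dot X$ does not depend on $p$, while
\[
\dot Z\big|_{p_2} - \dot Z\big|_{p_1} \;=\; -\,Z X\,(p_2-p_1).
\]
Thus at every common point $(X^*,Z^*)$ with $X^*,Z^*>0$, the two tangent vectors agree in the $X$-component and differ by a nonzero vertical vector, so any intersection of $\Gamma_{p_1}$ and $\Gamma_{p_2}$ must be transverse. I intend to combine this with the monotonicity of $\Gamma_{p_1}$ granted by $p_1\in\F\cup\C$ to rule out any such intersection.

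By Lemma \ref{lem l1, l2 for X>alpha}(ii), $\Gamma_{p_1}$ is a simple $C^1$ curve contained in $\{\dot X>0,\,\dot Z<0\}\cap 1Q$. I plan to argue that, together with the positive $X$- and $Z$-semiaxes, it splits $1Q$ into two open connected components, which I will call $B_1$ (the one containing points $(X,Z)$ with small $X>0$ and $Z$ slightly below $\lambda(N+a)$) and $A_1$. Each orbit $\Gamma_p$ emanates from $N_0$ with tangent slope $-\frac{p\lambda(N+a)}{N+2+2a}$ by Proposition \ref{local study stationary points}(2); since this slope is strictly decreasing in $p$, for $p_2>p_1$ the orbit $\Gamma_{p_2}$ enters $B_1$ near $N_0$, and it enters $A_1$ for $p_2<p_1$.

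Assuming for contradiction that $\Gamma_{p_1}\cap\Gamma_{p_2}$ contains a point other than $N_0$, I would set
\[
t_0 \;:=\; \inf\{\,t\in\real\,:\;\Gamma_{p_2}(t)\in\Gamma_{p_1}\,\},
\]
which is finite (since the tangent slopes at $N_0$ differ) and attained (since $\Gamma_{p_1}$ is closed in $1Q$). Writing $P^*=\Gamma_{p_2}(t_0)$, connectedness forces $\Gamma_{p_2}\bigl((-\infty,t_0)\bigr)\subset B_1$ in the case $p_2>p_1$. At $P^*$ one has $\dot X^*>0$, so the outward normal to $\Gamma_{p_1}$ pointing into $A_1$ can be taken as $\nu:=(-\dot Z^*_{p_1},\dot X^*)$, and I would compute
\[
\dot\Gamma_{p_2}(t_0)\cdot\nu
\;=\; \dot X^*\bigl(\dot Z^*_{p_2}-\dot Z^*_{p_1}\bigr)
\;=\; -\,\dot X^*\,Z^*X^*(p_2-p_1)\;<\;0,
\]
which means $\Gamma_{p_2}$ crosses $\Gamma_{p_1}$ transversely from $A_1$ into $B_1$ at $t_0$. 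Hence $\Gamma_{p_2}(t)\in A_1$ for $t$ slightly less than $t_0$, contradicting the inclusion $\Gamma_{p_2}\bigl((-\infty,t_0)\bigr)\subset B_1$. The case $p_2<p_1$ is symmetric.

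The step I expect to require the most care is the topological decomposition $1Q\setminus\Gamma_{p_1}=A_1\sqcup B_1$. When $p_1\in\F$ this is immediate because $\Gamma_{p_1}$ joins $N_0$ to $A_0$ and closes up into a Jordan curve together with the two coordinate semiaxes; for $p_1\in\C$ the orbit is unbounded in $X$ but still approaches $(+\infty,0)$ at the blow-up time by Proposition \ref{prop classifi regular}, so an analogous splitting holds via a suitable compactification. Once this is in place, the whole proof reduces to the one-line computation of $\dot\Gamma_{p_2}(t_0)\cdot\nu$ above.
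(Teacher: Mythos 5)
Your proof is correct and follows essentially the same route as the paper's: both arguments order $\Gamma_{p_1}$ and $\Gamma_{p_2}$ near $N_0$ via the $p$-dependence of the unstable tangent direction (Proposition \ref{local study stationary points}(2)), pass to the first intersection point, and derive a contradiction from the fact that $\dot{X}$ is $p$-independent while $\dot{Z}\big|_{p_2}-\dot{Z}\big|_{p_1}=-ZX(p_2-p_1)$, with Lemma \ref{lem l1, l2 for X>alpha}(ii) guaranteeing $\dot{X}>0$, $\dot{Z}<0$ along $\Gamma_{p_1}$. The only difference is presentational: you make the ``stays above/below'' step explicit through the two-component decomposition and the normal-vector sign computation, where the paper compares slopes at the intersection point directly.
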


\begin{proof} 
Both $\Gamma_{p_1}$ and $\Gamma_{p_2}$ have their $\alpha$-limit at the stationary point $N_0$ which is a saddle point. By Proposition \ref{local study stationary points}(2) the tangent unstable directions for $\Gamma_{p_1}$ and $\Gamma_{p_2}$ at $N_0$ are given, respectively, by
\begin{align}\label{directions p1,p2}
\textstyle{Z=-\frac{p_1\lambda (N+a)}{N+2+2a}X}\quad \textrm{ and }\quad \textstyle{Z=-\frac{p_2\lambda (N+a)}{N+2+2a}X}.
\end{align} 
Assume by contradiction that $\Gamma_{p_1}(t)=(X_1(t),Z_1(t))$ and $\Gamma_{p_2}(t)=(X_2(t),Z_2(t))$ intersect. Let us denote by $Q$ the first intersection point. Since the dynamical system \eqref{DS+} is autonomous, one may assume that the intersection happens at the same time for both trajectories, i.e.\ $Q= (X_1(t_0),Z_1(t_0))=(X_2(t_0),Z_2(t_0))$.

To fix the ideas assume $p_1<p_2$. Then, by \eqref{directions p1,p2}, at least in a neighborhood of $N_0$, $\Gamma_{p_1}$ is above $\Gamma_{p_2}$ because $X\to 0^+$ (from the right). Moreover, from \eqref{DS+} and Lemma \ref{lem l1, l2 for X>alpha}(ii) we have
\begin{align}\label{derivatives p1<p2}
\dot{X}_1(t_0)=\dot{X}_2(t_0)>0, \quad \dot{Z}_2(t_0)<\dot{Z}_1(t_0)< 0,
\end{align}
since only $\dot{Z}$ depends on $p$.
In particular $\Gamma_{p_1}$ remains above $\Gamma_{p_2}$ after intersecting. Thus the two trajectories must have the same tangent at the point $Q$, which is not possible by \eqref{derivatives p1<p2}.
The case $p_2<p_1$ is analogous.
\end{proof}

From the previous results we immediately get that a fast decaying solution can exist for only one value of $p$.

\begin{corol}\label{prop at most one F}
	There exists at most one $p$ in the interval $(p_+^{s,a},+\infty)$ such that $p\in \F$.
\end{corol}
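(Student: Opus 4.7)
The plan is to argue by contradiction using Proposition \ref{prop crossing Gammap} together with the explicit tangent slopes at the saddle points $N_0$ and $A_0$ provided by Proposition \ref{local study stationary points}. Suppose $p_1,p_2\in\F$ with $p_+^{s,a}<p_1<p_2$. Since $p_1\in\F\subset\F\cup\C$, Proposition \ref{prop crossing Gammap} asserts that $\Gamma_{p_1}$ and $\Gamma_{p_2}$ do not meet in the phase plane; I will derive a contradiction by showing that they must in fact cross.

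By Lemma \ref{lem l1, l2 for X>alpha}(ii), for each $p_i\in\F$ the trajectory $\Gamma_{p_i}$ does not meet $\ell_1^+$, so $\dot{X}_{p_i}>0$ throughout. Hence each admits a global graph representation $Z=\zeta_{p_i}(X)$ on $X\in(0,\tilde{N}_+-2)$, with $\zeta_{p_i}(X)\to\lambda(N+a)$ as $X\to 0^+$ (since $\alpha(\Gamma_{p_i})=N_0$) and $\zeta_{p_i}(X)\to 0$ as $X\to(\tilde{N}_+-2)^-$ (since $\omega(\Gamma_{p_i})=A_0$).

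Next I would compare $\zeta_{p_1}$ and $\zeta_{p_2}$ near each endpoint using the $C^1$ graph regularity furnished by Proposition \ref{prop GH}. At $X=0^+$, the unstable manifold at $N_0$ is tangent to the line of slope $-\frac{p\,\lambda(N+a)}{N+2+2a}$, a quantity strictly decreasing in $p$, so a first-order Taylor expansion yields $\zeta_{p_2}(X)<\zeta_{p_1}(X)$ for all sufficiently small $X>0$. At $X=(\tilde{N}_+-2)^-$, the stable manifold at $A_0$ is tangent to the line of slope $\Lambda\,\frac{-p(\tilde{N}_+-2)+2+a}{\tilde{N}_+-2}$, which for $p>p_+^{s,a}$ is negative and also strictly decreasing in $p$. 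Since the factor $X-(\tilde{N}_+-2)$ is now negative, the analogous Taylor expansion flips the inequality to $\zeta_{p_2}(X)>\zeta_{p_1}(X)$ for $X$ sufficiently close to $\tilde{N}_+-2$. The continuous function $\zeta_{p_2}-\zeta_{p_1}$ on $(0,\tilde{N}_+-2)$ must therefore vanish at some $X^*$, yielding an intersection point of $\Gamma_{p_1}$ and $\Gamma_{p_2}$ and contradicting Proposition \ref{prop crossing Gammap}.

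The only point that requires any care is checking that the first-order expansion really does determine the sign of $\zeta_{p_2}-\zeta_{p_1}$ in a one-sided neighborhood of each endpoint, but this follows directly from the $C^1$ graph regularity at each saddle in Proposition \ref{prop GH} combined with the strict monotonicity in $p$ of the two tangent slopes. Once these two one-sided sign comparisons are in place, the intermediate value step and the resulting contradiction are automatic, so there is no serious technical obstacle in the argument.
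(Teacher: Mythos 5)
Your argument is correct and is essentially the paper's own proof: both compare the order of $\Gamma_{p_1}$ and $\Gamma_{p_2}$ near $N_0$ and near $A_0$ via the $p$-monotonicity of the tangent slopes from Proposition \ref{local study stationary points}, observe that the order reverses, and conclude a forced intersection contradicting Proposition \ref{prop crossing Gammap}. Your explicit graph representation $Z=\zeta_p(X)$ (justified by Lemma \ref{lem l1, l2 for X>alpha}(ii)) and the intermediate value step merely make precise what the paper leaves implicit.
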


\begin{proof}
Assume by contradiction that $p_1,p_2\in \F$ for some $p_+^{s,a}<p_1<p_2$.
This means that the corresponding trajectories $\Gamma_{p_1}$ and $\Gamma_{p_2}$ both come out from $N_0$ at $-\infty$ and converge to $A_0$ at $+\infty$.
We have already observed by \eqref{directions p1,p2} that $\Gamma_{p_1}$ stays above $\Gamma_{p_2}$ in a neighborhood of $N_0$. 

On the other hand, since $A_0$ is a saddle point for $p>p_+^{s,a}$, looking at the linear stable directions given by Proposition \ref{local study stationary points}(3), we have that $\Gamma_{p_1}$ and $\Gamma_{p_2}$ arrive at $A_0$ with a reversed order; i.e.\ $\Gamma_{p_2}$ is above $\Gamma_{p_1}$. This is because $X\to (\tilde{N}_+-2)$ from the left.

Hence, $\Gamma_{p_1}$ and $\Gamma_{p_2}$ should intersect, but this is not possible by Proposition \ref{prop crossing Gammap}.
\end{proof}

Another important consequence of Proposition \ref{prop crossing Gammap} is the following result.

\begin{corol}\label{cor separation p0}
Let $p_0\in \F$, $p_0>p_+^{s,a}$, then $p\in \C$ for $p_+^{s,a}<p<p_0$, and $p\in\Ps\cup\Sl$ for $p>p_0$.
\end{corol}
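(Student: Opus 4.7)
My plan is to exploit the non-crossing property of Proposition \ref{prop crossing Gammap} to trap $\Gamma_p$ on a specific side of $\Gamma_{p_0}$, determined by a slope comparison at $N_0$, and then classify the asymptotic behaviour via Poincaré-Bendixson. Since $p_0\in\F$ with $p_0>p^{s,a}_+$, Lemma \ref{lem l1, l2 for X>alpha}(ii) tells me that $\Gamma_{p_0}$ is a monotone graph $Z=Z_{p_0}(X)$ over $X\in(0,\tilde{N}_+-2)$ going from $N_0$ to $A_0$, disjoint from $\ell_1^+$. A slope comparison at $A_0$ (the stable line there has slope $-\frac{p_0(\tilde{N}_+-2)-(2+a)}{\tilde{N}_+-2}\Lambda<-\Lambda$, because $p_0>p^{s,a}_+$, while $\ell_1^+$ has slope $-\Lambda$) pins down $\Gamma_{p_0}$ strictly above $\ell_1^+$ everywhere. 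Together with the axis segments from $O$ to $N_0$ and from $O$ to $A_0$, the arc $\Gamma_{p_0}$ then encloses a bounded region $D_2\subset 1Q$; let $D_1=1Q\setminus\overline{D_2}$. For every $p>p^{s,a}_+$, $M_0(p)$ lies on $\ell_1^+\cap 1Q$, hence in $D_2$. By Proposition \ref{local study stationary points}(2), the tangent of $\Gamma_p$ at $N_0$ has slope $-\frac{p\lambda(N+a)}{N+2+2a}$, strictly steeper than that of $\Gamma_{p_0}$ if $p>p_0$ and strictly less steep if $p<p_0$. Hence $\Gamma_p$ emerges into $D_2$ when $p>p_0$ and into $D_1$ when $p_+^{s,a}<p<p_0$; by Proposition \ref{prop crossing Gammap} and invariance of the axes, this initial side is preserved for all $t$.

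For $p>p_0$, the orbit $\Gamma_p$ is trapped in the bounded region $D_2$, hence defined for all $t\in\real$ and bounded. By Poincaré-Bendixson, $\omega(\Gamma_p)$ is either a stationary point in $\overline{D_2}$ or a periodic orbit in $D_2$. Of the candidates $\{O,N_0,A_0,M_0\}$, $O$ is excluded since its stable direction is the $X$-axis, $N_0$ by the local saddle analysis (the only orbit out of $N_0$ into $1Q$ is $\Gamma_p$ itself, by Propositions \ref{prop regular N0} and \ref{local uniqueness}), and $A_0$ by Corollary \ref{prop at most one F}. So $\omega(\Gamma_p)=M_0$ or a periodic orbit around $M_0$; by Proposition \ref{prop classifi regular}, $p\in \Sl\cup\Ps$.

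For $p_+^{s,a}<p<p_0$, $\Gamma_p\subset D_1$, and I claim $\dot X>0$ along the whole orbit. In $R_\lambda^+$ this is automatic from the expression of $\dot X$; in $R_\lambda^-$ with $X<\tilde{N}_+-2$ the orbit lies above $\Gamma_{p_0}$, hence above $\ell_1^+$, which is precisely the half-plane where $\dot X>0$; and for $X\geq \tilde{N}_+-2$ one reads $\dot X>0$ directly from the system. Strict monotonicity of $X$ rules out any periodic $\omega$-limit, and if $\Gamma_p$ were defined for all forward times it would be bounded (Proposition \ref{AP bounds}) and so converge to a stationary point in $\overline{D_1}$; the only option is $A_0$, excluded by Corollary \ref{prop at most one F}. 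Hence $X$ reaches $\tilde{N}_+-2$ in finite time, after which the integration argument in the proof of Proposition \ref{AP bounds} forces $X$ to blow up in finite forward time; by Proposition \ref{prop classifi regular}, $p\in \C$. The most delicate step is the geometric fact that $\Gamma_{p_0}$ lies strictly above $\ell_1^+$, since this is what powers the $\dot X>0$ bookkeeping in $D_1$ and places $M_0$ inside $D_2$; the rest is a clean application of the non-crossing property and a standard Poincaré-Bendixson dichotomy.
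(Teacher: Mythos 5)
Your proof is correct and follows essentially the same route as the paper: non-crossing of $\Gamma_{p_0}$ (Proposition \ref{prop crossing Gammap}), the slope ordering at $N_0$ to fix which side of $\Gamma_{p_0}$ the orbit $\Gamma_p$ occupies, and the position of $\ell_1^+$ below $\Gamma_{p_0}$ to force blow-up for $p<p_0$ and trapping for $p>p_0$. The only difference is presentational: you supply details the paper leaves implicit (the slope comparison at $A_0$ showing $\Gamma_{p_0}$ stays above $\ell_1^+$, and the explicit $\dot X>0$ and Poincar\'e--Bendixson case analyses), where the paper instead concludes more tersely via the decomposition $(1,+\infty)=\C\cup\F\cup\Ps\cup\Sl$.
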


\begin{proof}
If $p_0 \in \F$ then $\Gamma_{p_0}$ cannot intersect any other regular orbit $\Gamma_p$ for $p\neq p_0$, by Proposition \ref{prop crossing Gammap}. This means that if $\Gamma_p$ is above or below $\Gamma_{p_0}$ in a neighborhood of $N_0$, it remains so for all time. Moreover, $p \not\in \F$ for $p\neq p_0$ by Corollary \ref{prop at most one F}.

Thus, if $p<p_0$, $\Gamma_p$ lies above $\Gamma_{p_0}$ and so cannot converge to $M_0=M_0(p)$ neither to a periodic orbit around it, since the line $\ell_1^+$ is below $\Gamma_{p_0}$. Notice that $\ell_1^+$ does not depend on $p$. So $p\in\C$. 

On the other hand, if $p>p_0$ then $\Gamma_p$ lies below $\Gamma_{p_0}$ and therefore cannot cross the line $L=\{(X,Z): X = \tilde{N}_+ -2\}$ in order to blow up in finite time.
Hence $p \not\in \C$ and so must be in $\Ps \cup \Sl$. 
\end{proof}

\subsection{The critical exponent}\label{section critical exponent}

Our goal here is to define and characterize the critical exponent which will be proved to have all properties listed in Theorem \ref{Th M+}.

We start by showing that $\Sl$ and $\C$ contain the intervals $(p^{p,a}_+,+\infty)$ and $(1,p_\Delta)$ respectively.

\begin{prop}\label{p>p^p+}
If $p> p^{p,a}_+$ then $p\in \mathcal{S}$.
\end{prop}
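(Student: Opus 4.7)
The plan is to argue by contradiction and reduce to Green's theorem applied with the same Dulac weight already exploited in Proposition \ref{Dulac}. Fix $p > p^{p,a}_+$. Proposition \ref{Dulac} immediately rules out periodic orbits, so $p \notin \mathcal{P}$; in view of the partition $(1, +\infty) = \mathcal{C} \cup \mathcal{F} \cup \mathcal{P} \cup \mathcal{S}$ from Proposition \ref{prop classifi regular}, it suffices to rule out $p \in \mathcal{F}$ and $p \in \mathcal{C}$.

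Suppose then, for contradiction, that $p \in \mathcal{F} \cup \mathcal{C}$. By Lemma \ref{lem l1, l2 for X>alpha}(ii) the trajectory $\Gamma_p = (X_p, Z_p)$ does not intersect $\ell_1^+$ and satisfies $\dot{X}_p > 0$, $\dot{Z}_p < 0$ throughout; in particular it is a graph of $Z$ as a monotone function of $X$. In case $\mathcal{F}$ it joins $N_0$ to $A_0$, while in case $\mathcal{C}$ it must cross the vertical line $\{X = \tilde{N}_+ - 2\}$ at a unique point $(\tilde{N}_+ - 2, Z_1)$ with $Z_1 > 0$ before blowing up. I would then close $\Gamma_p$ into a Jordan curve bounding a planar domain $D \subset 1Q$ using the segments of the coordinate axes from $O$ to $A_0$ and from $O$ to $N_0$, and, in case $\mathcal{C}$, also the vertical segment from $A_0$ up to $(\tilde{N}_+ - 2, Z_1)$.

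Next I would apply Green's theorem to $\varphi\,(f\,dZ - g\,dX)$ on $D$ with the Dulac weight $\varphi(X,Z) = X^\alpha Z^\beta$, $\beta = (3-p)/(p-1)$, from the proof of Proposition \ref{Dulac}. The boundary integrand vanishes on $\Gamma_p$ since $(f,g)$ is tangent to the flow; it vanishes on the coordinate axes because $\varphi f \sim X^{\alpha+1} Z^\beta$ near $X = 0$ and $\varphi g \sim X^\alpha Z^{\beta+1}$ near $Z = 0$, with $\alpha + 1 > 0$ and $\beta + 1 = 2/(p-1) > 0$; and on the vertical segment present only in case $\mathcal{C}$ it reduces to a strictly positive multiple of $Z_1^{\beta+2}$. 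Hence, with counterclockwise orientation, the LHS of the Green identity is $0$ in case $\mathcal{F}$ and strictly positive in case $\mathcal{C}$. On the other hand, the explicit formulas for $\Phi$ computed in Proposition \ref{Dulac}, combined with the ordering $p_\Delta^a < p^{p,a}_+$ (which follows from $\tilde{N}_+ < N$ and the monotonicity of $x \mapsto (x + 2 + 2a)/(x - 2)$), give $\Phi < 0$ in both $R^+_\lambda$ and $R^-_\lambda$ for $p > p^{p,a}_+$, so the RHS $\iint_D \Phi\,dX\,dZ$ is strictly negative. This contradicts the sign of the LHS in either scenario and forces $p \in \mathcal{S}$.

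The main delicate point I anticipate is the vanishing of the boundary contributions on the coordinate axes, since the Dulac weight $\varphi$ itself becomes singular there when $\beta < 0$, i.e.\ for $p > 3$; but the extra factor of $X$ in $f$ and of $Z$ in $g$ cancels that singularity exactly, so the Bendixson-Dulac computation used in the proof of Proposition \ref{Dulac} extends to these non-periodic boundary arcs without the need for any cutoff procedure.
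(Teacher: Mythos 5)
Your argument is correct, but it follows a genuinely different route from the paper's. The paper disposes of the cases $p\in\C$ and $p\in\F$ by a soft dynamical argument: for $p>p^{p,a}_+$ the point $M_0$ is a sink (Proposition \ref{local study stationary points}), so there are orbits converging to it; in either case the trajectory $\Gamma_p$, the axes (and, for $p\in\C$, the line $L=\{X=\tilde N_+-2\}$) enclose a region containing $M_0$ from which orbits cannot escape backward in time, and since Proposition \ref{Dulac} excludes periodic orbits, Poincar\'e--Bendixson leaves such orbits with no admissible $\alpha$-limit. You instead extend the Bendixson--Dulac identity \eqref{eq Dulac} from closed orbits to the Jordan curve formed by $\Gamma_p$, the coordinate axes and (in case $\C$) a vertical transversal, and derive a sign contradiction: the boundary integral is $0$ (case $\F$) or strictly positive (case $\C$, since $\dot X>0$ on $L$), while $\iint_D\Phi<0$ for $p>p^{p,a}_+=\max\{p^{p,a}_+,p^a_\Delta\}$. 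Your version is more quantitative and does not use the classification of $M_0$ as a sink nor Poincar\'e--Bendixson; the paper's version avoids any integration and any boundary-regularity issues, at the cost of relying on the local analysis at $M_0$.

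One technical caveat: your closing claim that no cutoff is needed is too optimistic. For $p>3$ one has $\beta<0$, and then $\varphi f\sim X^{\alpha+1}Z^{\beta}$ genuinely blows up as $Z\to0^+$, so the vector field $(\varphi f,-\varphi g)$ is not continuous up to the $X$-axis and Green's theorem cannot be invoked directly on $D$. The fix is routine -- apply the identity on $D\cap\{Z>\varepsilon\}$ and let $\varepsilon\to0$; the extra horizontal boundary term is $O(\varepsilon^{\beta+1})\to0$ because $\beta+1=\tfrac{2}{p-1}>0$, and $\Phi$ is integrable because $\beta>-1$ -- but it should be stated, since the exact cancellation you point to occurs only in the boundary \emph{form} $-\varphi g\,\rmd X$, not in the field whose divergence you are integrating.
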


\begin{proof}
In case $p>p^{p,a}_+$, by Proposition \ref{Dulac} we know that there are no periodic orbits of the system \eqref{DS+}, hence $p\not\in \Ps$. Moreover, $M_0$ is a sink by Proposition \ref{local study stationary points}. Let us show that $p\not\in\C\cup \F$.

If $p\in \mathcal{C}$, then $\Gamma_p$ crosses the line $L:=\{(X,Z),X=\tilde{N}_+-2\}$, and blows up in finite time.
Then the region $D$ enclosed by $\Gamma_p$, $L$ and the $X,Z$ axes form a bounded domain from which an orbit can only leave $D$ forward in time through $L$. 
Thus, an orbit arriving at $M_0\in D$ cannot go anywhere in backward time, giving a contradiction with Poincaré-Bendixon theorem.

If instead $p\in \F$, then the bounded set whose boundary is given by $\Gamma_p$ together with the $X$ and $Z$ axes, is invariant and contains $M_0$. Again the orbits arriving at $M_0$ cannot escape in backward time.
Therefore $p\in \Sl$.
\end{proof}

\begin{prop}\label{p<p_Delta}
For $p\in (1,\max\{p^{s,a}_+,p_\Delta^a\})$ it holds that $p\in \mathcal{C}$.
\end{prop}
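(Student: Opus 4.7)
The plan is to show that, for $p$ in the stated range, the regular trajectory $\Gamma_p=(X,Z)$ admits no admissible $\omega$-limit set inside $\overline{1Q}$; combined with Proposition~\ref{AP bounds} and Poincaré--Bendixson this forces $\Gamma_p$ to blow up at some finite forward time, and Propositions~\ref{types of blow ups} and~\ref{prop decreasing concave} then pin the mode of blow-up to $X\to+\infty$, $Z\to 0$, which is precisely $p\in\C$.

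First I would rule out each stationary point as a possible $\omega$-limit of $\Gamma_p$. The point $N_0$ is already the $\alpha$-limit of $\Gamma_p$ (Proposition~\ref{prop regular N0}). The origin $O$ has its local stable manifold on the invariant $X$-axis by Proposition~\ref{local study stationary points}(1), so no interior orbit converges to it. The point $M_0$ lies outside $\overline{1Q}$ when $p\le p^{s,a}_+$ and is a source when $p^{s,a}_+<p<p^{p,a}_+$ (Proposition~\ref{local study stationary points}(4)); since in our range $p<p^a_\Delta<p^{p,a}_+$, in either case $\omega(\Gamma_p)\neq M_0$.

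The key step is to simultaneously exclude the heteroclinic $\omega(\Gamma_p)=A_0$ and every periodic orbit. When $p<p^{s,a}_+$ the point $A_0$ is a source (Proposition~\ref{local study stationary points}(3)) and the exclusion is immediate; for $p^{s,a}_+\le p<p^a_\Delta$ (nonempty only if $p^{s,a}_+<p^a_\Delta$) I would argue by contradiction via the Dulac identity from the proof of Proposition~\ref{Dulac}. Assuming $\Gamma_p$ connects $N_0$ to $A_0$, let $D\subset 1Q$ be the bounded open region whose boundary is $\Gamma_p$ together with the axis segments $\overline{ON_0}\subset\{X=0\}$ and $\overline{OA_0}\subset\{Z=0\}$. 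Applying the divergence theorem to $\varphi F$, with $\varphi(X,Z)=X^\alpha Z^\beta$, $\alpha$ as in~\eqref{critical exponents a} and $\beta=(3-p)/(p-1)>-1$, on the regularization $D\cap\{X>\varepsilon,Z>\varepsilon\}$ and letting $\varepsilon\to 0$, one obtains
\begin{equation*}
\int_D \Phi(X,Z)\,\rmd X\,\rmd Z \;=\; \oint_{\partial D}\varphi\, F\cdot n\,\rmd s \;=\; 0,
\end{equation*}
because $F$ is tangent to the orbit $\Gamma_p$ and because on the axes $\varphi g\sim X^\alpha Z^{\beta+1}\to 0$ and $\varphi f\sim X^{\alpha+1}Z^\beta\to 0$ (using $\alpha>0$, $\beta>-1$, and the factorizations $f=X(\cdots)$, $g=Z(\cdots)$). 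However, the computation in the proof of Proposition~\ref{Dulac} gives $\Phi>0$ throughout $1Q$ whenever $p<p^a_\Delta$ and $p<p^{p,a}_+$, both guaranteed here since $\max\{p^{s,a}_+,p^a_\Delta\}\le p^{p,a}_+$: contradiction. The same identity also rules out periodic orbits when $p\le p^a_\Delta$ (this is Proposition~\ref{Dulac}); for $p^a_\Delta<p\le p^{s,a}_+$ (only possible if $p^a_\Delta<p^{s,a}_+$) any periodic orbit would by Poincaré--Bendixson enclose an interior stationary point, but the only candidate $M_0$ is not in $1Q$ in this range.

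Having exhausted the admissible $\omega$-limits, Proposition~\ref{AP bounds} forces $\Gamma_p$ to blow up at some finite forward time $T$. Proposition~\ref{types of blow ups} leaves only two blow-up modes; mode (i) would yield $u'(r_1)=0$ at a radius where $u>0$, contradicting Proposition~\ref{prop decreasing concave}. Hence mode (ii) occurs, meaning $u$ vanishes at the finite radius $R_p=e^T$, and so $p\in\C$. The main obstacle is the heteroclinic possibility $\omega(\Gamma_p)=A_0$ in the delicate window $p^{s,a}_+\le p<p^a_\Delta$; it is handled cleanly by the Dulac/Pohozaev-type integral identity above, where the vanishing of $f$ on $\{X=0\}$ and of $g$ on $\{Z=0\}$ makes the axis boundary contributions disappear for free.
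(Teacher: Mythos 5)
Your proof is correct, but the key step is handled by a genuinely different mechanism than the paper's. The paper first disposes of $1<p<p^{s,a}_+$ exactly as you do ($A_0$ a source, $M_0\notin 1Q$, no periodic orbits), and then, for the delicate window $p^{s,a}_+\le p<p^a_\Delta$, excludes the heteroclinic connection $\omega(\Gamma_p)=A_0$ by a purely dynamical trapping argument: if $\Gamma_p$ were bounded it could only converge to $A_0$, and the region $D$ enclosed by $\Gamma_p$ and the axes would then be a forward-invariant set containing the source $M_0$ (resp.\ the degenerate point $A_0=M_0$ with its repulsive directions from Proposition~\ref{p=p^s}), so the orbits issued from $M_0$ would be trapped in $\overline{D}$ with no admissible $\omega$-limit, contradicting Poincar\'e--Bendixson. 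You instead extend the Dulac computation from closed orbits to the heteroclinic loop $\Gamma_p\cup\overline{ON_0}\cup\overline{OA_0}$, getting $0=\oint_{\partial D}\varphi\,F\cdot n=\int_D\Phi>0$. This is a valid Pohozaev-type variant of \eqref{eq Dulac}: the boundary terms do vanish since $\alpha>0$ and $\beta+1=\tfrac{2}{p-1}>0$ kill the axis contributions and also make $\Phi=c\,X^\alpha Z^\beta$ integrable on $D$, and the sign condition $p<p^a_\Delta\le p^{p,a}_+$ makes $\Phi>0$ in both $R^+_\lambda$ and $R^-_\lambda$. Your route is more unified (it treats $p=p^{s,a}_+$ and $p^{s,a}_+<p<p^a_\Delta$ in one stroke and needs nothing about the local structure of $M_0$ beyond its absence as an $\omega$-limit), at the cost of the $\varepsilon$-regularization of the corners; the paper's route avoids any integration but leans on the classification of $M_0$ as a source and on Proposition~\ref{p=p^s}. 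Two small points to make explicit if you write this up: the divergence theorem must be applied separately on $D\cap R^+_\lambda$ and $D\cap R^-_\lambda$ since $F$ is only piecewise $C^1$, with the interface fluxes on $\ell_+$ cancelling by continuity of $\varphi F$; and in the subcase $p^a_\Delta<p<p^{s,a}_+$ (when $p^a_\Delta<p^{s,a}_+$) your exclusion of periodic orbits correctly falls back on Poincar\'e--Bendixson rather than on Proposition~\ref{Dulac}, which you do note.
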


\begin{proof}
If $1<p<p^{s,a}_+$, then $A_0$ is a source and $M_0\not\in 1Q$. In particular there are no periodic orbits contained in $1Q$. Hence $p\not\in \F\cup\Sl\cup\Ps$, so if $\max\{p^{s,a}_+,p_\Delta^a\}=p^{s,a}_+$ the proof is complete.

Assume $p^{s,a}_+< p_\Delta^a$. Then, at $p=p^{s,a}_+$ no periodic orbits are allowed by Proposition \ref{Dulac},
whose proof also shows nonexistence of homoclinics at $A_0=M_0$ (i.e.\ orbits $\tau$ with $\omega(\tau)=\alpha(\tau)=A_0$).
Therefore, if we had $\omega(\Gamma_p)=A_0$ then the orbits which come out from $A_0$ could not go anywhere. 
Alternatively, see Proposition \ref{p=p^s}.
In particular, $p^{s,a}_+\not\in \F\cup\Sl\cup\Ps$. 

Finally, if $p^{s,a}_+<p<p_\Delta^a$, then $M_0$ is a source by Proposition \ref{local study stationary points}. The trajectory $\Gamma_p$ cannot be bounded, otherwise it could only converge to $A_0$ as $t\to+\infty$. As in the proof of Proposition \ref{p>p^p+} this would produce a contradiction, because the region $D$ enclosed by $\Gamma_p$, and the $X$, $Z$ axes would be an invariant set from which any trajectory issued from $M_0$ cannot exit.

In any case $\Gamma_p$ blows up in finite time, and so $p\in \C$.
\end{proof}

By Propositions \ref{p<p_Delta} and \ref{p>p^p+} we have that the set $\C$ is nonnempty and bounded from above. Therefore we define
\vspace{-0.5cm}
\begin{align}\label{def p*+}
p^*_{a+}=\sup \C
\end{align}
and obviously $p_{a+}^*\in[\max\{p^{s,a}_+,p_\Delta^a\},  p^{p,a}_+]$. From now on we refer to $p^*_{a+}$ as the critical exponent for the Pucci operator $\M^+$ with weight $|x|^a$.
The next result characterizes $p^*_{a+}$.

\begin{theorem}\label{characterization p*}
The number $p^*_{a+}$ defined in \eqref{def p*+} belongs to $\F$. Thus it is the only exponent in the equation \eqref{P radial m} for which there exists a unique, up to scaling, fast decaying solution. 
\vspace{0.07cm}

Moreover, if $\lambda<\Lambda$, then
$p^*_{a+}\neq p_\Delta^a$, $p^*_{a+}\neq p_+^{p,a}$, and \eqref{critical ordering M+} holds.
Further, $\Ps=(p^*_{a+},p^{p,a}_+]$, and for any $p\in \Ps$ the corresponding trajectory $\Gamma_p$ crosses the concavity line $\ell_+$ infinitely many times.
\end{theorem}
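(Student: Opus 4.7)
The plan proceeds in three steps following the four claims, using continuous dependence of $\Gamma_p$ on $p$, the non-intersection Proposition~\ref{prop crossing Gammap}, and Dulac restrictions on periodic orbits.

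First, to show $p^* := p^*_{a+} \in \F$: by Propositions~\ref{p<p_Delta} and~\ref{p>p^p+}, $\C$ is nonempty and bounded above by $p^{p,a}_+$, so $p^* \in [\max\{p^{s,a}_+, p_\Delta^a\}, p^{p,a}_+]$. Openness of $\C$ (Remark~\ref{C is open}) gives $p^* \notin \C$, and Proposition~\ref{local study stationary points}(4) (with $M_0$ a source or a center for $p \leq p^{p,a}_+$) gives $p^* \notin \Sl$. To exclude $p^* \in \Ps$, I would assume $\omega(\Gamma_{p^*}) = \theta$ is a periodic orbit. Since $N_0$ lies on the $Z$-axis while $\theta$ sits in $\{X>0\}$, $\Gamma_{p^*}$ reaches $\theta$ from the exterior component, so $\theta$ attracts from outside at $p^*$. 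Picking $p_n \in \C$ with $p_n \nearrow p^*$, the $-p$-proportional unstable slope at $N_0$ (Proposition~\ref{local study stationary points}(2)) places $\Gamma_{p_n}$ above $\Gamma_{p^*}$ near $N_0$, and Proposition~\ref{prop crossing Gammap} (applied to $p_n \in \C$) keeps the two curves disjoint throughout. Fixing a thin annular neighborhood $U$ of $\theta$ on which the $p^*$-flow is inward from both sides of $\partial U$, continuity of the vector field in $p$ on the compact $\overline U$ preserves this inward invariance for $p_n$ sufficiently close to $p^*$; continuous dependence of $\Gamma_{p_n}$ on compact time intervals drops $\Gamma_{p_n}(T) \in U$ for $n$ large, and the $p_n$-flow then traps $\Gamma_{p_n}$ in $U$, preventing it from reaching $L = \{X = \tilde{N}_+ - 2\}$, contradicting $p_n \in \C$. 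Thus $p^* \in \F$, and uniqueness is Corollary~\ref{prop at most one F}.

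Second, for the strict ordering \eqref{critical ordering M+} when $\lambda < \Lambda$ and the characterization of $\Ps$: to show $p_\Delta^a \in \C$ (and hence $p^* > p_\Delta^a$), Dulac forbids periodic orbits at $p_\Delta^a$ and $M_0$ is a hyperbolic source there (since $p_\Delta^a < p^{p,a}_+$), so $\Gamma_{p_\Delta^a}$ is either $\Upsilon_{p_\Delta^a}$ or blows up through $L$; the fast-decay alternative would force $p^* = p_\Delta^a \in \F$, and Corollary~\ref{cor separation p0} would place every $p \in (p_\Delta^a, p^{p,a}_+)$ into $\Ps$, whose periodic orbits would limit at $p = p_\Delta^a$ to an invariant set which cannot exist (Hopf is at $p^{p,a}_+$ not $p_\Delta^a$, and Proposition~\ref{AP bounds} excludes escape to infinity). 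The case $p^{s,a}_+ > p_\Delta^a$ is analogous via non-hyperbolicity of $A_0 = M_0$ at $p^{s,a}_+$. At $p = p^{p,a}_+$, Proposition~\ref{prop M0 center} yields the center structure around $M_0$ with a family of periodic orbits, and the analysis therein forces $\Gamma_{p^{p,a}_+}$ into $\Ps$, so $p^* < p^{p,a}_+$. For $\Ps = (p^*, p^{p,a}_+]$: each such $p$ misses $\C$ (openness), $\F$ (uniqueness), and $\Sl$ (Proposition~\ref{local study stationary points}(4)), so lies in $\Ps$; conversely $\Ps \subset (p^*, p^{p,a}_+]$ by Proposition~\ref{Dulac} and the first step. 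For the infinite crossings of $\ell_+$: given $p \in \Ps$ with limit cycle $\theta$, Proposition~\ref{Dulac}(ii) forbids $\theta \subset R^-_\lambda \cup \ell_+$ unless $p = p^{p,a}_+$, and even there the orbit absorbing $\Gamma_p$ (coming from $N_0 \in R^+_\lambda$) must cross $\ell_+$; hence $\theta$ crosses $\ell_+$ twice each period and $\Gamma_p$ infinitely often.

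The hardest step is the $\Ps$-exclusion in Step~1. The delicate points are: interpreting \emph{$\Gamma_{p_n}$ above $\Gamma_{p^*}$} as a definite topological containment in the spiralling geometry near $\theta$; upgrading uniform-on-compacts continuity (of both the vector field in $p$ and the orbit in $p$) into a bona fide infinite-time trapping of $\Gamma_{p_n}$ in a tubular neighborhood of $\theta$; and handling the bilateral inward-invariance of the annulus $U$, which relies on $\theta$ attracting from both sides and, in turn, on $M_0$ being a source in the relevant parameter range. The remedy is to fix the annulus first, using attractivity of $\theta$ at $p^*$ uniformly on a compact neighborhood of $\theta$ in the plane, and only then to select $n$ large enough that $\Gamma_{p_n}$ reaches it.
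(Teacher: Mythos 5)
Your overall skeleton (openness of $\C$, exclusion of $\Sl$ via the source/center nature of $M_0$, Corollary \ref{cor separation p0} to get $\Ps=(p^*_{a+},p^{p,a}_+]$) matches the paper, but the two load-bearing steps are done differently and both have genuine gaps. For the exclusion of $p^*_{a+}\in\Ps$, your trapping argument needs a closed curve near the limit cycle $\theta$ on which the vector field points strictly inward, robustly in $p$. The inner boundary of your annulus $U$ is not inward in general: $\theta=\omega(\Gamma_{p^*_{a+}})$ is only known to attract from outside, and if several cycles are nested between the source $M_0$ and $\theta$, the inner side of $\theta$ may be repelling, so ``$M_0$ is a source'' does not give bilateral attractivity. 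Even the outer transversal curve is not free: the natural Poincar\'e--Bendixson trapping region has an orbit arc of the $p^*_{a+}$-flow in its boundary, to which the perturbed field is only approximately tangent, so positive invariance does not survive the perturbation in $p$ without either hyperbolicity of $\theta$ or a converse-Lyapunov construction, neither of which you establish. The paper sidesteps all of this with Lemma \ref{lem l1, l2 for X>alpha}: if $p^*_{a+}\in\Ps$ then $\Gamma_{p^*_{a+}}$ crosses the \emph{fixed, $p$-independent} line $\ell_1^+$ at some finite time; by continuous dependence on $p$ over compact time intervals, $\Gamma_p$ crosses $\ell_1^+$ for all nearby $p$, contradicting the fact that trajectories with $p\in\C$ never meet $\ell_1^+$. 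You should replace the annulus argument by this.

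On the strict inequalities and the crossing claim: your proof that $p^*_{a+}\neq p^a_\Delta$ goes through showing $p^a_\Delta\in\C$ and rests on the assertion that the periodic orbits for $p\in(p^a_\Delta,p^{p,a}_+)$ ``would limit at $p=p^a_\Delta$ to an invariant set which cannot exist''; this is not an argument (the cycles could simply shrink to $M_0$ as $p\downarrow p^a_\Delta$). Likewise ``the analysis in Proposition \ref{prop M0 center} forces $\Gamma_{p^{p,a}_+}\in\Ps$'' is asserted, not proved. The paper derives both non-equalities \emph{after} $p^*_{a+}\in\F$ is known: the fast-decaying orbit $\Gamma_{p^*_{a+}}$ together with the axes bounds an invariant region $D$ containing the source (or center) $M_0$, so Poincar\'e--Bendixson forces periodic orbits in $D$ crossing $\ell_+$ twice, which Proposition \ref{Dulac} forbids both at $p^a_\Delta$ (no cycles at all) and at $p^{p,a}_+$ (no cycles crossing $\ell_+$ twice). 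Finally, your claim that $\Gamma_p$ crosses $\ell_+$ infinitely often at $p=p^{p,a}_+$ is incomplete: there the maximal cycle $\theta_0$ lies in $R^-_\lambda\cup\ell_+$, so a priori $\Gamma_p$ could be absorbed into a neighborhood of $\theta_0$ contained in $R^-_\lambda$ after finitely many crossings. The paper needs the additional tangency argument ($\theta_0$ touches $\ell_+$ at $P$, since otherwise the tail of $\Gamma_{p^{p,a}_+}$ would be a non-periodic bounded orbit of the dimension-$\tilde{N}_+$ Laplacian system at its critical exponent, which is impossible), and so do you.
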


\begin{proof}
First, $p^*_{a+}\not\in \C$ i.e.\ $\C$ does not have a maximum because $\C$ is open, see Remark \ref{C is open}. By Proposition \ref{local study stationary points} we know that $M_0$ is a source for every $p\in[\,p_\Delta^a, \, p^{p,a}_+)$; and $M_0$ is a center at $p=p^{p,a}_+$. 
Whence $p\not\in \Sl$ for all $p\in[\,p_\Delta^a, \, p^{p,a}_+]$, and in particular $p^*_{a+}\not\in \Sl$. 
On the other side, if $p^*_{a+}\in \Ps$ then $\Gamma_{p^*_{a+}}$ would cross the line $\ell_1^+$ by Lemma \ref{lem l1, l2 for X>alpha}(ii). Thus, by continuity with respect to $p$, the trajectory $\Gamma_p$ should also cross $\ell_1^+$ for $p$ close to $p^*_{a+}$. But every trajectory $\Gamma_p$ for $p\in \C$ does not cross $\ell_1^+$, by Lemma \ref{lem l1, l2 for X>alpha}. Therefore $p^*_{a+}\not\in \Ps$. 

Hence $p^*_{a+}$ belongs to $\F$ and the trajectory $\Gamma_{p^*_{a+}}$ together with the $X$ and $Z$ axes enclose a bounded invariant region $D$ which contains $M_0$ in its interior. Since $M_0$ is a source for $p\in [p^a_\Delta, p^{p,a}_+)$, and a center for $p=p^{p,a}_+$, the set $D$ contains periodic orbits which cross the line $\ell_+$ twice. Indeed, the flow is subjected to Poincaré-Bendixson theorem, see Figure~\ref{Fig F M+}. This implies that $p^*_{a+}$ can be neither $p^a_\Delta$ nor $p^{p,a}_+$ if $\lambda<\Lambda$, by Proposition \ref{Dulac}. In fact, at $p^a_\Delta$ there are no periodic orbits at all, while at $p^{p,a}_\pm$ no periodic orbits cross $\ell_+$ twice.

Note that we obtain \eqref{critical ordering M+} as long as $p^a_\Delta\geq p^{s,a}_+$. 
If $p^a_\Delta<p^{s,a}_+$ we still need to prove that $p^*_{a+}\neq p^{s,a}_+$. For instance this follows by well known Liouville results in \cite{CutriLeoni}. Alternatively, a proof of this fact is accomplished in Proposition \ref{p=p^s} which give nonexistence of entire positive solutions for $p= p^{s,a}_+$. 	

Next, by Corollary \ref{cor separation p0} we get $(p^*_{a+}, p^{p,a}_+] = \mathcal{P}$, since we have already observed that $[p^a_\Delta, p^{p,a}_+) \cap \Sl =\emptyset$.
By the definition of $\mathcal{P}$, the corresponding trajectory $\Gamma_p$ goes around a periodic orbit $\theta$. By Proposition \ref{Dulac}, if $p<p^{p,a}_+$ then $\theta$ must necessarily intersect both $R^+_\lambda$ and $R^-_\lambda$, while for $p=p^{p,a}_+$ the maximal periodic orbit $\theta_0$ does not intersect $R^+_\lambda$. 

We claim that $\theta_0$ is tangent to $\ell_+$ at the point $P=(\frac{1+a}{p},\lambda (N-1))\in \ell_+\cap \ell_2^+$ when $p=p^{p,a}_+$.
If this was not the case, then $\Gamma=\Gamma_{p^{p,a}_+}$ would belong to the region $R^-_\lambda$ for all $t\in I=[T,+\infty)$ for some $T>0$. Let us consider the restriction of $\Gamma$ to $I$, namely $\tau$. Since $\tau$ is a part of a trajectory for the Laplacian operator in dimension $\tilde{N}_+$, we may follow $\tau$ backward in time as a trajectory of the respective Laplacian-like dynamical system. However, the characterization of $p^{p,a}_+$ as the critical exponent there immediately contradicts the existence of $\tau$. Indeed, at the critical exponent only periodic trajectories are admissible around $M_0$, see for instance the proof of Proposition \ref{prop M0 center}.

Thus, in both cases $\Gamma_p$ for $p\in \Ps$ must cross the concavity line $\ell_+$ infinitely many times. 
\end{proof}

\begin{rmk}[$\Gamma_p=\Upsilon_p$]\label{Gammap=Upsilonp} The critical exponent $p^*_{a+}$ is the unique value of $p$ for which $\Gamma_p$ and $\Upsilon_p$ coincide, see Proposition \ref{local uniqueness}.
\end{rmk}

\begin{proof}[Proof of Theorem \ref{Th M+}]
One establishes the conclusion of Theorem \ref{Th M+} by combining \eqref{def p*+}, Corollary \ref{cor separation p0}, Propositions \ref{p>p^p+} and \ref{p<p_Delta}, together with Theorem \ref{characterization p*}.
\end{proof}

\subsection{Singular and exterior domain solutions}\label{section singular}

Here we show how the analysis of the regular trajectories performed in the previous sections almost completely determines the behavior of the other orbits of the dynamical system \eqref{DS+}.

Let us start by considering singular solutions.
When $p\leq p^{s,a}_+$ we saw in Proposition \ref{p<p_Delta} that $p\in \C$. On the other hand, there is not a unique trajectory arriving at the stationary point $A_0$ as in Proposition \ref{local uniqueness}. 
Indeed, for $p<p^{s,a}_+$, $A_0$ is a source and $M_0$ belongs to the fourth quadrant, see Proposition \ref{local study stationary points}.
The case $p=p^{s,a}_+$ is a bit more involved. The point $A_0=M_0$ is not a hyperbolic point, and we complement its local study in what follows. 

\begin{figure}[!htb]
	\centering
	\includegraphics[scale=0.4]{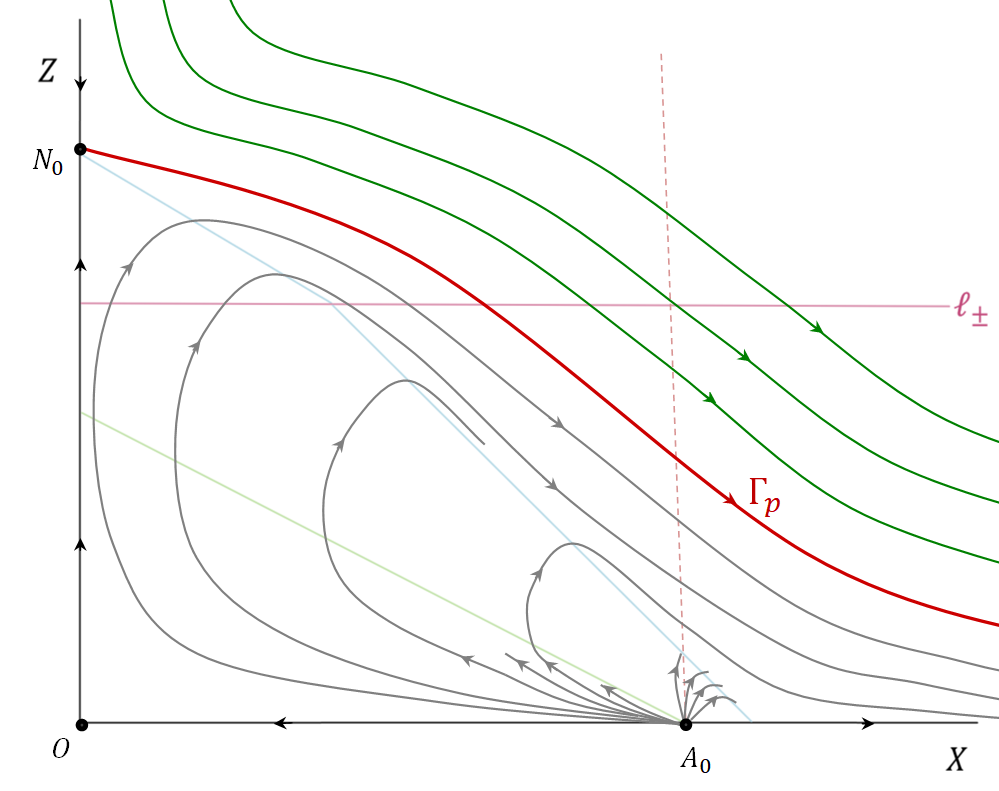}
	\caption{Case $p<p^{s,a}_\pm$ for $\M^\pm$; $A_0$ is a source and $M_0$ belongs to the fourth quadrant. Below $\Gamma_p$ trajectories corresponding to infinitely many $(\tilde{N}_\pm-2)$--blowing up solutions in a ball are shown, while above $\Gamma_p$ there are orbits corresponding to solutions in an annulus.}
	\label{Fig p<ps}
\end{figure}

\begin{prop}\label{p=p^s}
At $p=p^{s,a}_+$,  there exist infinitely many unstable orbits issued from $M_0=A_0$ below the line $\ell_1^+$. They move clockwisely and blow up in finite forward time. 
\end{prop}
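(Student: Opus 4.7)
The plan is to exploit the monotonicity of the flow in the region $\mathcal R$ of the first quadrant lying strictly below $\ell_1^+$ and to the left of $A_0$, namely
\[
\mathcal R \;=\; \bigl\{(X,Z)\in 1Q \,:\, 0<X<\tilde N_+ -2,\ 0<Z<\Lambda(\tilde N_+ -2 -X)\bigr\}.
\]
A direct sign analysis on \eqref{DS+} in $R^-_\lambda$ yields $\dot X<0$ and $\dot Z>0$ throughout $\mathcal R$. Indeed, the first equation of \eqref{DS+} gives $\dot X = X\,[X-(\tilde N_+-2) + Z/\Lambda]$, and in $\mathcal R$ we have $Z/\Lambda < \tilde N_+ - 2 - X$, so $\dot X < 0$. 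For the second, the critical identity $p^{s,a}_+ (\tilde N_+-2) = \tilde N_+ + a$ reduces the bracket to $p(\tilde N_+-2-X) - Z/\Lambda$, and combining $Z/\Lambda < \tilde N_+ - 2 - X$ with $p > 1$ gives $\dot Z > 0$.

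With this monotonicity, for any $(X_0,Z_0)\in \mathcal R$ the backward trajectory has $X$ strictly increasing and $Z$ strictly decreasing, both bounded within $\overline{\mathcal R}$. The standard planar fact that a monotone bounded trajectory converges to a stationary point forces the $\alpha$-limit to lie on $\overline{\mathcal R}$; the only candidate is $A_0=M_0$, since $O$ and $N_0$ would require $X\to 0$, contradicting monotonicity. Since $\mathcal R$ is a two-dimensional open set foliated by orbits, this yields a one-parameter family of distinct trajectories with $\alpha$-limit $A_0$, that is, the infinitely many unstable orbits issued from $M_0=A_0$ below $\ell_1^+$. Their initial segment moves up and to the left; once they cross $\ell_1^+$ transversely (the vector field there is vertical, pointing up by Proposition~\ref{prop flow}(3)), they enter the region above $\ell_1^+$ where $\dot X > 0$, producing the clockwise rotation around $M_0$.

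For the finite-time blow-up, note that by Propositions \ref{Dulac} and \ref{local study stationary points} no periodic orbits and no stationary points in the interior of $1Q$ exist at $p = p^{s,a}_+$; moreover, no homoclinic at $A_0=M_0$ is admissible (this is contained in the proof of Proposition \ref{Dulac}, as invoked in the proof of Proposition \ref{p<p_Delta}, applied to the candidate domain enclosed by a homoclinic). By Poincaré-Bendixson the orbit cannot remain bounded forward, and by Proposition \ref{AP bounds} it must blow up in finite forward time, the only admissible blow-up type in $1Q$ (by Proposition \ref{types of blow ups}) giving $X\to +\infty$ and $Z\to 0$. The main obstacle is the non-hyperbolicity of $M_0=A_0$ at the critical exponent, which precludes any direct use of a stable/unstable manifold theorem; the elementary monotonicity argument in $\mathcal R$ is what circumvents this cleanly.
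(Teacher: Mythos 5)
Your construction of the unstable orbits is correct and is genuinely different from (and in some respects cleaner than) the paper's argument. The paper works locally at the degenerate point $A_0=M_0$ (eigenvalues $\tilde N_+-2$ and $0$, the null direction along $\ell_1^+$) and argues that orbits can only exit $A_0$ below $\ell_1^+$; you instead observe that the triangle $\mathcal R$ below $\ell_1^+$ satisfies $\dot X<0$, $\dot Z>0$ (your use of the identity $p^{s,a}_+(\tilde N_+-2)=\tilde N_++a$ is exactly right), is negatively invariant because the field on $\ell_1^+$ points out of $\mathcal R$ and the axes are invariant, and contains no stationary point in its closure with positive $X$-coordinate other than $A_0$. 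Monotone bounded convergence then forces $\alpha(\tau)=A_0$ for every orbit through $\mathcal R$, giving uncountably many orbits issued from $A_0$ below $\ell_1^+$ without any invariant-manifold theorem. (You should add one line showing these orbits actually leave $\mathcal R$ forward in time --- if one stayed in $\mathcal R$ forever it would converge, by the same monotonicity, to a stationary point of $\overline{\mathcal R}$ with $Z_\infty>0$, and there is none --- since the clockwise motion and the blow-up both depend on this.)

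The genuine gap is in the forward blow-up, specifically in how you exclude periodic orbits and homoclinics before invoking Poincar\'e--Bendixson. Proposition \ref{Dulac} rules out periodic orbits of \eqref{DS+} only for $1<p\le p^a_\Delta$ or $p>p^{p,a}_+$, and the homoclinic exclusion you cite from the proof of Proposition \ref{p<p_Delta} is carried out there under the explicit hypothesis $p^{s,a}_+<p^a_\Delta$, where the Dulac integrand $\Phi$ has one sign on all of $1Q$. But $p^{s,a}_+>p^a_\Delta$ does occur (e.g.\ $a=0$, $N=10$, $\lambda/\Lambda=1/5$ gives $\tilde N_+=2.8$, hence $p^{s,0}_+=3.5>1.5=p^0_\Delta$), and then $p^{s,a}_+$ lies in the interval $(p^a_\Delta,p^{p,a}_+)$ where $\Phi$ changes sign across $\ell_+$, so neither citation excludes a cycle or a homoclinic loop crossing $\ell_+$ twice. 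The conclusion remains true, and for periodic orbits your own sign computation already contains the repair: at $p=p^{s,a}_+$ the set $\{\dot X<0\}\cap 1Q$ is precisely $\mathcal R$, where $\dot Z>0$, so $\{\dot X<0,\ \dot Z<0\}\cap 1Q=\emptyset$ and no closed orbit can exist (equivalently, there is no stationary point in the open quadrant for a cycle to enclose). The homoclinic case is more delicate: the paper shows that a homoclinic loop at $A_0$ would force, via Poincar\'e--Bendixson, the existence of such orbits entirely contained in $R^-_\lambda$, to which the one-region Dulac computation of Proposition \ref{Dulac}(ii) applies and contradicts $p^{s,a}_+\ne p^{p,a}_+$. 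As written, your proof does not cover this configuration.
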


\begin{proof}
	At $p=p^{s,a}_+$ the eigenvalues of $A_0=M_0$ are $(\tilde{N}_+-2)$ and $0$. In particular, $A_0$ is not hyperbolic and Proposition~\ref{prop GH} no longer applies. 
	The linear direction corresponding to $(\tilde{N}_+-2)$ lies on the $X$ axis, while the one corresponding to $0$ coincides with the line $\ell_1^+$.
	However, through the flow analysis in Proposition \ref{prop flow} (3) (see Figure \ref{Fig flow2}) it is easy to conclude that $M_0$ has infinitely many repulsive directions between these two lines. In this case, the orbits are issued from $A_0$, with respective tangent lines between the $X$ axis and the line $\ell_1^+$. 
	
	To see this let us first observe that $\ell_1^+$ and $\ell_2^+$ intersect at $A_0$. Then note that $\dot{X}>0$ in the region above $\ell_1^+$. On the other hand, an orbit coming out from $A_0$ needs to increase its $Z$ values, so staying below $\ell_2^+$.
	If it started between $\ell_1^+$ and $\ell_2^+$, then it should initially decrease its $X$ values, which gives a contradiction.
	Hence the only way to come out from $A_0$ is below the line $\ell_1^+$.
	
We have already deduced in the proof of Proposition \ref{p<p_Delta} that periodic orbits at $p^{s,a}_+$ are not admissible if $p^{s,a}_+<p^a_\Delta$ by Dulac's criterion (Proposition \ref{Dulac}). 	However, this is true even if $p^{s,a}\geq p^a_\Delta$ by the flow direction, see Figure \ref{Fig flow2}. Indeed, the region $\dot{X}, \dot{Z}<0$ does not intersect $1Q$.	 By the same reason, the trajectories near $M_0=A_0$ move clockwisely, by intersecting both lines $\ell_1^+$ and $\ell^+_2$ exactly once. 
	 
To conclude we infer that the behavior of the flow on the lines $\ell_1^+$ and $\ell_2^+$ does not allow any orbit to reach $A_0=M_0$ in forward time.
Assume on the contrary that there exists a homoclinic orbit $\tau$ with $\omega(\tau)=\alpha(\tau)=A_0$. In this case $\tau$ creates a bounded invariant region $D$ such that any orbit inside $D$ is also homoclinic, by Poincaré-Bendixson theorem. 
Fix a point $Q_0\in R_\lambda^-\cap \ell_2^+\cap D$, and consider the unique trajectory $\tau_0$ passing through this point at time $t=0$. By construction, $\tau_0$ lies entirely in the region $R_\lambda^-$. However, the proof of Proposition \ref{Dulac}(ii), applied to the region $D_0$ enclosed by the trajectory $\tau_0$, yields a contradiction with the fact that $p^{s,a}_+\neq p^{p,a}_+$. 
\end{proof}

\begin{figure}[!htb]
	\centering
	\includegraphics[scale=0.4]{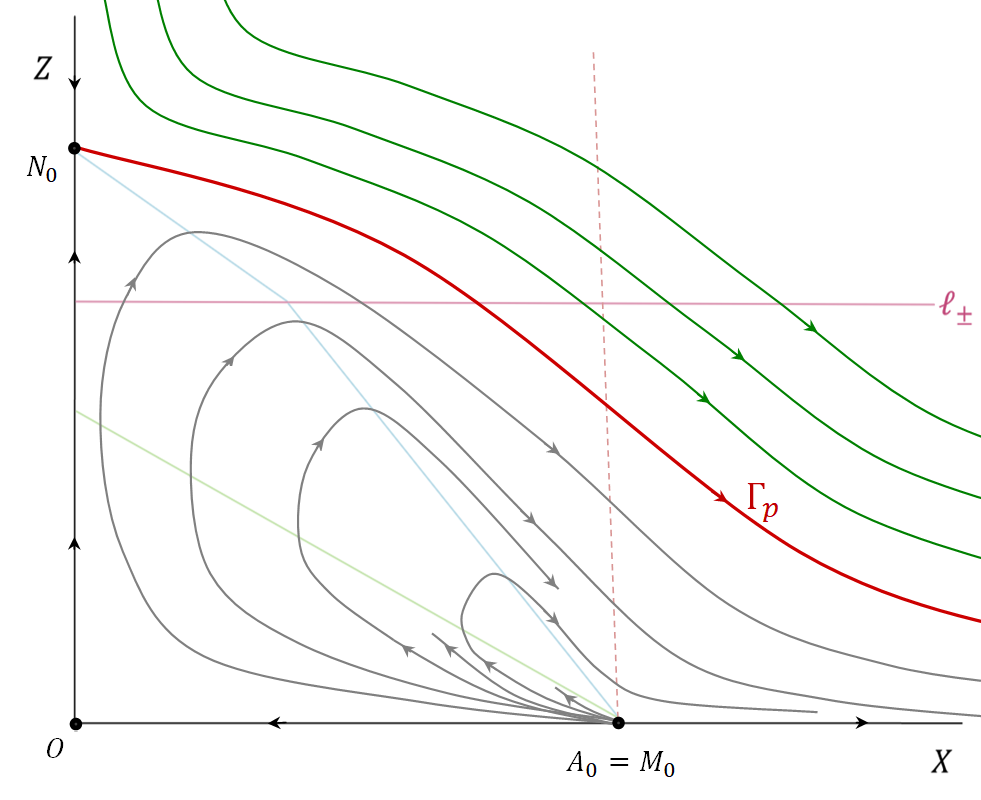}
	\caption{Case $p=p^{s,a}_\pm$: $p\in \C$ and $A_0=M_0$ has infinitely many unstable directions. Below $\Gamma_p$ are the orbits corresponding to infinitely many  $(\tilde{N}_\pm-2)$--blowing up solutions in a ball.}
	\label{Fig p=ps}
\end{figure}

\begin{figure}[!htb]
	\centering
	\includegraphics[scale=0.44]{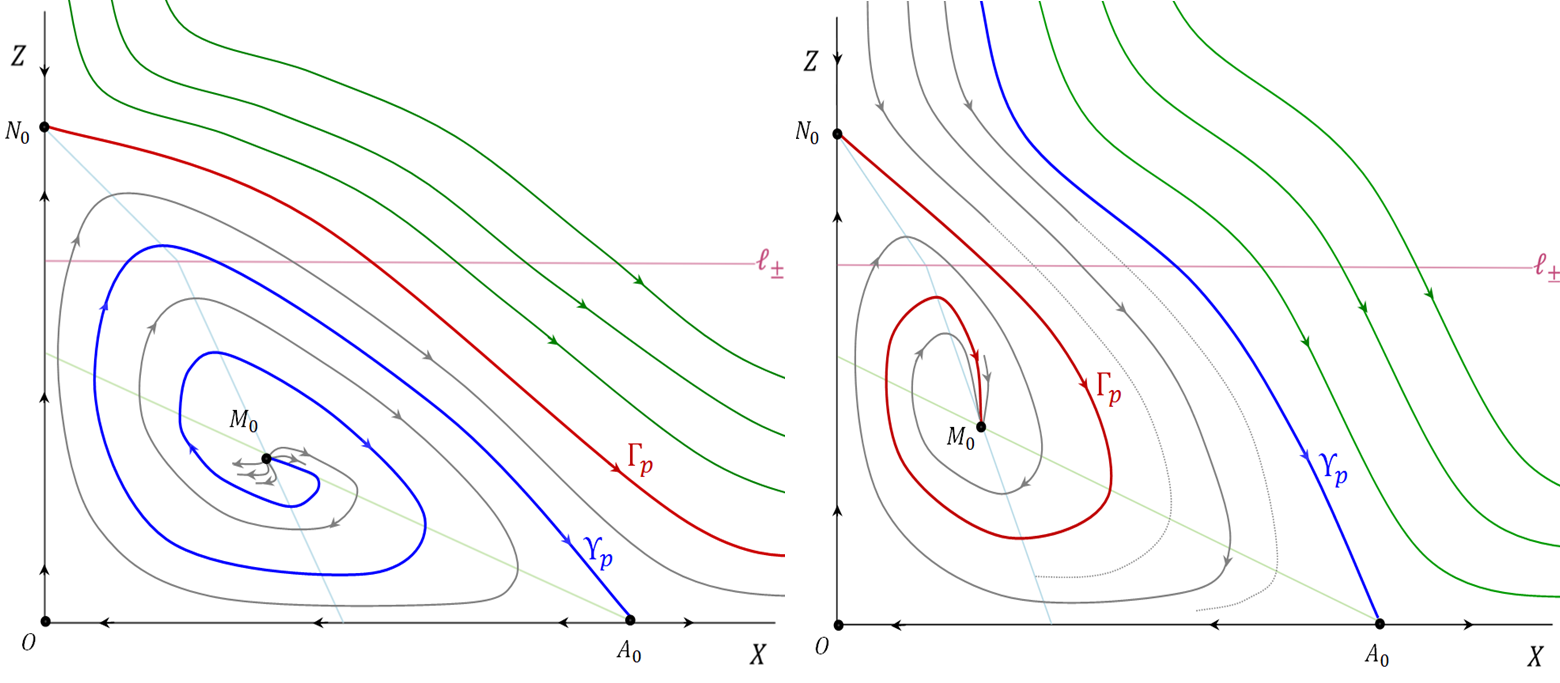}
	\caption{$p>p^{s,a}_\pm$, cases $p\in \C$ (LHS) and $p\in \Sl$ (RHS) without periodic orbits. For instance, this describes the range of $p$ where Dulac criterion holds, for both operators $\M^\pm$.}
	\label{Fig Dulac}
\end{figure}

\begin{figure}[!htb]
	\centering
	\includegraphics[scale=0.4]{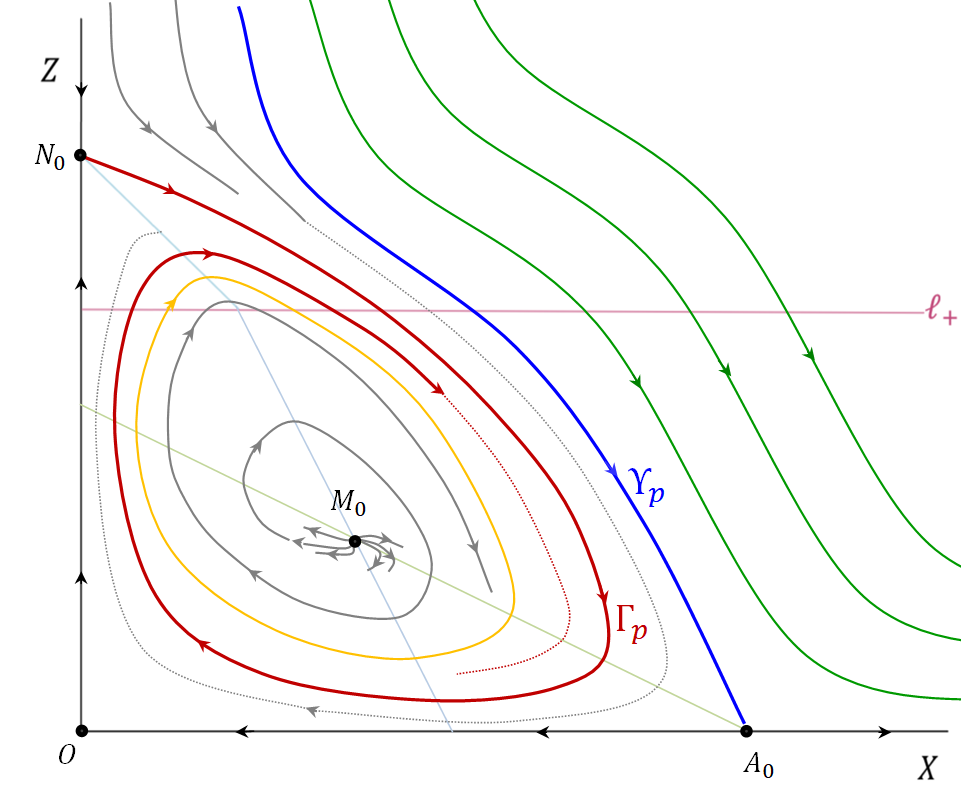}
	\caption{Case $p\in \Ps$ for $\M^+$, $p\in (p_{a+}^*,p^{p,a}_+)$. Here $M_0$ is a source. The orbits inside the displayed periodic orbit correspond to infinitely many $\alpha$--blowing up solutions with pseudo-slow decay at $+\infty$. All trajectories above $\Gamma_p$ correspond to solutions either in the exterior of a ball or in an annulus.}
	\label{Fig P M+}
\end{figure}

\begin{figure}[!htb]
	\centering
	\includegraphics[scale=0.4]{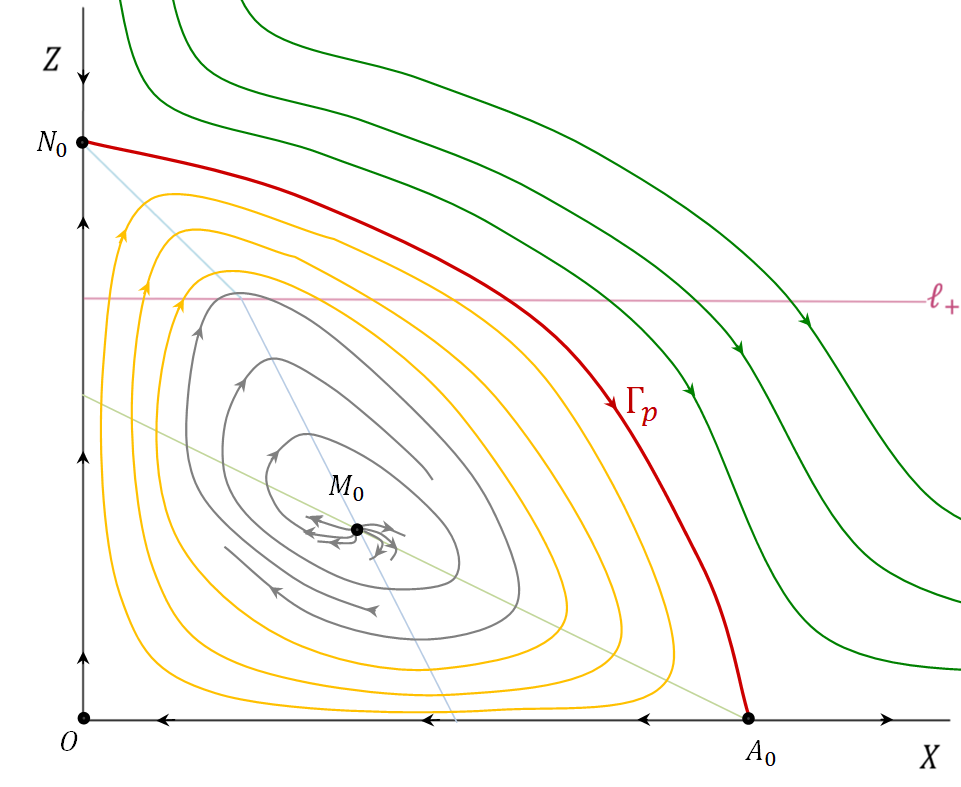}
	\caption{Case $p=p^*_{a+}\in \F$. Here $M_0$ is a source and $\Gamma_p=\Upsilon_p$, see Remark \ref{Gammap=Upsilonp}. There are infinitely many $\alpha$--blowing up solutions with pseudo-slow decay (inside the minimal periodic orbit), and infinitely many pseudo--blowing up solutions with pseudo-slow decay (periodic orbits).  Moreover, there are no solutions in the exterior of a ball.} 
	\label{Fig F M+}
\end{figure}

It is interesting to observe that when $p\leq p^{s,a}_+$ these results give a simple proof of some Liouville theorems in \cite{CutriLeoni}, concerning radial solutions. The same holds for $\M^-$, as we shall see in Section \ref{section Pucci-}.

\begin{rmk}\label{remark singular periodic}
For all $p>p^{s,a}_+$, as already mentioned in Section \ref{Introduction}, there exists a singular trivial solution given by $u_p=C_p\, r^{-\alpha}$, $C_p$ as in \eqref{omega M0}. This corresponds to the stationary trajectory $\Sigma_p\equiv M_0$. Moreover, any periodic orbit of the dynamical system \eqref{DS+} which intersects the concavity line $\ell_+$ twice corresponds to a classical pseudo--blowing up solution for the problem \eqref{P radial m}.	Instead, in the case $p=p^{p,a}_+$, periodic orbits around the center configuration of $M_0$ lying entirely in the region $R^-_\lambda$ (see Proposition \ref{prop M0 center}) cannot correspond to $C^2$ solutions, since they oscillate between the two functions $c_1r^{-\alpha}$ and $c_2r^{-\alpha}$ indefinitely for some $0<c_1<c_2$, without never changing convexity.
We stress that these two types of solutions originated from periodic orbits do exist in the case of the Laplacian in dimensions $N$ and $\tilde{N}_+$, for the critical exponents $p^a_\Delta$ and $p^{p,a}_+$ respectively, see Theorem 6.1(iii) in \cite{BV89arma}.
\end{rmk}

\begin{lem}\label{claim Gammap blows up M+}
	If $p> p^*_{a+}$ then $\Upsilon_p$ (see Proposition \ref{local uniqueness}) blows up in finite backward time. In particular, $\Upsilon_p$ does not correspond to a singular solution for any $p> p^*_{a+}$.
\end{lem}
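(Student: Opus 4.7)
The plan is to argue by contradiction via Poincar\'e--Bendixson, using the critical fast-decay orbit $\Gamma_{p^*_{a+}}=\Upsilon_{p^*_{a+}}$ as a separating curve in the first quadrant. Suppose that $\Upsilon_p$ extends to every $t\in(-\infty,T_p]$, where $T_p$ is the time at which $\Upsilon_p$ reaches $A_0$. Then by Proposition~\ref{AP bounds} the orbit stays in the bounded box $[0,\tilde N_+-2]\times[0,\lambda(N+a)]$, so Poincar\'e--Bendixson forces $\alpha(\Upsilon_p)$ to be either a stationary point or a periodic orbit in the closure of this box. The aim is to exclude every such possibility.

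By Theorem~\ref{characterization p*}, $\Gamma_{p^*_{a+}}$ together with the $X$- and $Z$-axes encloses a bounded region $D^*\subset 1Q$ containing $M_0$ in its interior; let $E^*$ denote the complementary region inside the box. I first would show that $\Upsilon_p\subset E^*$ for every $p>p^*_{a+}$. By Proposition~\ref{local study stationary points}(3) the slope of the linearised stable manifold at $A_0$, namely $\Lambda(2+a-p(\tilde N_+-2))/(\tilde N_+-2)$, is strictly decreasing in $p$, so for $p>p^*_{a+}$ the tangent of $\Upsilon_p$ at $A_0$ is strictly steeper than that of $\Upsilon_{p^*_{a+}}$; hence $\Upsilon_p$ enters $E^*$ immediately to the left of $A_0$. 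Since distinct trajectories of an autonomous system are pairwise disjoint and the axes are invariant, $\Upsilon_p$ cannot cross $\Gamma_{p^*_{a+}}$ or the axes, and so it stays in $E^*$ throughout its domain. I would then rule out each candidate for $\alpha(\Upsilon_p)$ in $\overline{E^*}$. The only stationary points in $\overline{E^*}$ are $N_0$ and $A_0$: the point $A_0$ is excluded because its unstable manifold is the $X$-axis while $\Upsilon_p$ lies in the interior of $1Q$, and $N_0$ is excluded because its unstable manifold is the unique orbit $\Gamma_p$ (Proposition~\ref{local uniqueness}), and $\Upsilon_p=\Gamma_p$ would place $p$ in $\F$, contradicting Corollary~\ref{prop at most one F}. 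Any periodic orbit in $E^*$ would have to surround an interior stationary point, but the only interior candidate $M_0$ lies in $D^*$: for $p>p^*_{a+}$, the regular orbit $\Gamma_p$ stays below $\Gamma_{p^*_{a+}}$ by the non-intersection argument of Corollary~\ref{cor separation p0}, so $\Gamma_p\subset D^*$, and its $\omega$-limit (either $M_0$ itself or a periodic orbit around $M_0$) is contained in $\overline{D^*}$. This exhausts every Poincar\'e--Bendixson alternative and yields the contradiction, proving that $\Upsilon_p$ must blow up in finite backward time. The final assertion is then immediate, since a singular solution would correspond to a trajectory defined on a full interval $(-\infty,T)$ with $r=e^t$ sweeping $(0,e^T)$, which the finite backward blow-up rules out.

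The main subtlety, and therefore the hardest step, is confirming rigorously that $M_0$ sits strictly inside $D^*$ (not on the separatrix $\Gamma_{p^*_{a+}}$) for every $p>p^*_{a+}$. For pseudo-slow $p$ this follows because an $\omega$-limit cycle of $\Gamma_p$ surrounding $M_0$ cannot cross $\Gamma_{p^*_{a+}}$ without leaving $\overline{D^*}$, which would contradict $\Gamma_p\subset D^*$; for slow decaying $p$ the weaker bound $M_0\in\overline{D^*}$ already suffices, since $M_0$ is then a sink and cannot be an $\alpha$-limit regardless of its exact position on the boundary.
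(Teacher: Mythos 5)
Your overall strategy is sound in outline (Poincar\'e--Bendixson plus a barrier that traps $\Upsilon_p$ away from all possible $\alpha$-limits), but it rests on a step whose justification fails. You claim that $\Upsilon_p$ cannot cross $\Gamma_{p^*_{a+}}$ ``since distinct trajectories of an autonomous system are pairwise disjoint.'' That principle only applies to two orbits of the \emph{same} vector field. Here $\Upsilon_p$ is an orbit of the system at parameter $p$ while $\Gamma_{p^*_{a+}}$ is an orbit of the system at parameter $p^*_{a+}$; the right-hand side of \eqref{DS+} depends on $p$ through the $\dot Z$-equation, so these are integral curves of two different vector fields and may a priori intersect. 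The paper is itself careful on exactly this point: Proposition \ref{prop crossing Gammap} proves non-crossing of $\Gamma_{p_1}$ and $\Gamma_{p_2}$ by a bespoke comparison (ordered tangent directions at $N_0$, the sign relation $\dot Z_{p_2}<\dot Z_{p_1}$ at a putative first intersection, and the monotonicity $\dot X>0$, $\dot Z<0$ along the $\F\cup\C$ orbit), not by ODE uniqueness. To make your separatrix argument work you would need an analogous cross-parameter lemma for the stable manifolds of $A_0$: both $\Upsilon_p$ and $\Upsilon_{p^*_{a+}}=\Gamma_{p^*_{a+}}$ arrive at $A_0$ with stable directions ordered in $p$ by Proposition \ref{local study stationary points}(3), $\Gamma_{p^*_{a+}}$ is a decreasing graph over $X$ with $\dot X>0$ along it, and at any common point the $p$-field has the same $\dot X$ but strictly smaller $\dot Z$, so a crossing can only go from above to below as $X$ increases --- which is incompatible with $\Upsilon_p$ ending up above $\Gamma_{p^*_{a+}}$ at $A_0$. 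This is provable, but it is a real argument that your proposal replaces with a false appeal to uniqueness; as written the central step has a hole. (A smaller omission: you should also exclude $O$ as an $\alpha$-limit, which is immediate since its unstable manifold is the invariant $Z$-axis.)

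For comparison, the paper's proof avoids the cross-parameter issue entirely by working at a fixed $p$. For $p\in(p^*_{a+},p^{p,a}_+]$ it uses that $p\in\Ps$, so $\omega(\Gamma_p)$ is the maximal periodic orbit $\theta_p$ around $M_0$; if $\alpha(\Upsilon_p)$ were $\theta_p$ (or anything inside it), then $\Upsilon_p$ and $\Gamma_p$ --- now orbits of the \emph{same} system --- would have to cross, which genuine ODE uniqueness forbids. For $p>p^{p,a}_+$ it uses that $M_0$ is a sink and Dulac's criterion excludes periodic orbits, so no admissible backward limit exists and Proposition \ref{AP bounds} forces finite-time backward blow-up. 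If you repair the non-crossing step as indicated, your argument becomes a legitimate alternative; without it, the proof does not go through.
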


\begin{proof}
If $p\in (p^*_{a+},p_+^{p,a}]$ we have $p\in \Ps$, and there exists a maximal periodic orbit $\theta_p$ around $M_0$ such that $\omega(\Gamma_p)=\theta_p$. If $\alpha(\Upsilon_p)=\theta_p$, then $\Gamma_p$ and $\Upsilon_p$ would cross somewhere. Indeed, this comes from the fact that the stable linear tangent direction at $A_0$ is above the line $\ell^+_2$ (see Figure \ref{Fig flow}), and the vector field on $\ell^+_2$ points down for $X>\alpha$, by Proposition \ref{prop flow} (3). Obviously crossings are not admissible by uniqueness of the ODE problem.
On the other side, for $p>p^{p,a}_+$, $M_0$ is a sink and periodic orbits are not allowed by Proposition \ref{Dulac}. 
Thus, in both cases, using Proposition \ref{AP bounds}, we get that $\Upsilon_p$ blows up backward in finite time.
\end{proof}

\begin{proof}[Proof of Theorem \ref{Th M+ singular}]
$(i)$ First we recall that for $p\in (1,p^{s,a}_+]$ there are no periodic orbits, by Proposition \ref{Dulac} and the proof of Proposition \ref{p=p^s}. 
Moreover, we know by Corollary \ref{cor separation p0} and Theorem~\ref{characterization p*} that the regular trajectory $\Gamma_p$ blows up forward in finite time. Hence, $\Gamma_p$ together with the line $L=\{(X,Z):X=\tilde{N}_+-2\}$ and the $X$, $Z$ axes, create a bounded region $D$ from which an orbit of \eqref{DS+} may only leave through $L$.

Thus, if $p<p^{s,a}_+$, any trajectory issued from $A_0$ (which is a source by Proposition \ref{local study stationary points}) crosses the line $L$ and then blows up in finite time. If $p=p^{s,a}_+$ the same holds, by Proposition \ref{p=p^s}.
In both cases there are infinitely many such trajectories which correspond to singular solutions in an interval $(0,R)$, $R>0$, see Proposition \ref{types of blow ups}.
They are $(\tilde{N}_+-2)$--blowing up, cf.\  \eqref{omega A0} with the $\omega$-limit exchanged by $\alpha$-limit.
Therefore there cannot be singular solutions in $\rN\setminus\{0\}$ for this range of $p$.

\vspace{0.07cm}

$(ii)$-$(iii)$ For $p\in (p^{s,a}_+, p^*_{a+})$, as in $(i)$, the trajectory $\Gamma_p$, the line $L$ and the $X,Z$ axes determine a bounded region $D$ from which an orbit may only leave through $L$. Recall that for these values of $p$, $A_0$ is a saddle point and $M_0$ is a source, see Proposition \ref{local study stationary points}. 
Thus, the unique orbit $\Upsilon_p$ arriving at $A_0$ (see Proposition \ref{local uniqueness}) can either converge to $M_0$ or to a periodic orbit around $M_0$, backward in time.
If $p\leq p^a_\Delta$ there are no periodic orbits (Proposition \ref{Dulac}), so $\Upsilon_{p}$ corresponds to $\alpha$--blowing up solution of \eqref{P radial m}; in particular this is the case for each $p\in (p^{s,a}_+,p^a_\Delta]$ if $p^{s,a}_+\le p_\Delta^a$. 

If in turn $p\in (p^a_\Delta,p^*_{a+})$ there could be periodic orbits around $M_0$, so that $\Upsilon_{p}$ corresponds to either a pseudo--blowing up or a $\alpha$--blowing up solution of \eqref{P radial m}.  

All the other orbits coming out from $M_0$ or from a periodic orbit $\theta$ around $M_0$ (whenever such $\theta$ exists) must necessarily leave $D$ by crossing the line $L$ in forward time, and therefore blow up in finite time. This gives infinitely many singular solutions of \eqref{P radial m}; they are either $\alpha$--blowing up or pseudo--blowing up in intervals $(0,R)$, $R>0$.

If $\theta$ exists, we have in addition infinitely many orbits $\tau$ issued from $M_0$ and converging to a minimal periodic orbit (which is $\theta$ if the system has only one limit cycle). Each $\tau$ crosses infinitely many times the line $\ell_+$ when $t\to +\infty$, and so corresponds to a $\alpha$--blowing up solution of \eqref{P radial m} with pseudo-slow decay at $+\infty$ as in \eqref{omega theta}. 
On the other hand, a periodic orbit itself in this range of $p$'s crosses $\ell_+$ twice, so corresponds to a pseudo--blowing up solution to \eqref{P radial m}, see Remark \ref{remark singular periodic}; they are pseudo-slow decaying and change concavity infinitely many times both as $r\to 0$ and $r\to +\infty$.

\vspace{0.07cm}

$(iv)$ When $p=p^*_{a+}$, the regular trajectory $\Gamma_{p^*_{a+}}$ together with the $X$ and $Z$ axes delimit an invariant set $D$ containing $M_0$. 
Since $M_0$ is a source, we have already seen that there exists a periodic orbit around $M_0$; say $\theta$ is the minimal one.
We then infer that there exist infinitely many periodic orbits in the region $D\setminus \mathrm{int}(\theta)$, at least in a neighborhood of $\partial D$, see Fig.~\ref{Fig F M+}. Indeed, the existence of a maximal periodic orbit $\theta_0$ inside $D$ would create a bounded region $D\setminus \mathrm{int}(\theta_0)$ in which the orbits issued from $\theta_0$ could not go anywhere, thus violating Poincaré-Bendixson theorem. 

\vspace{0.07cm}

$(v)$ When $p\in (p^*_{a+},p_+^{p,a})$, we have that $M_0$ is a source and there exists a minimal periodic orbit $\theta$ around $M_0$. 
In this case, $\theta$ crosses the line $\ell_+$ twice since $p<p^{p,a}_+$, see Proposition \ref{Dulac}. 
Thus, all trajectories issued from $M_0$ converge to  $\theta$ in forward time.
These and the periodic orbits give us singular solutions as in the last part of the proof of $(iii)$. Finally, note that no singular solutions converge to $A_0$ by Lemma  \ref{claim Gammap blows up M+}, so the assertion holds.

\vspace{0.07cm}
	
$(vi)$ By Propositions \ref{local study stationary points}, \ref{Dulac}, and \ref{prop M0 center} we have that for $p=p^{p,a}_+$ the stationary point $M_0$ is a center while for $p>p^{p,a}_+$ $M_0$ is a sink without periodic orbits.
These and the fact that $A_0$ is a saddle point, whose unstable manifold is the $X$ axis, imply that no singular nontrivial solutions are admissible.
\end{proof}

Finally we consider the case of exterior domain solutions, proving Theorem \ref{Th exterior ball+-F Introd} for $\M^+$. The proof for $\M^-$ turns out to be the same.

In Section \ref{subsection exterior} we have observed that a solution $u$ of \eqref{shooting derivative} necessarily satisfies \eqref{mu delta}. Hence the corresponding trajectory $\varXi_p$ blows up backward in finite time, see Proposition \ref{prop exterior}. Thus, to prove Theorem \ref{Th exterior ball+-F Introd} it is enough to show that for $p\in (1,p^*_{a+}]$ there are no orbits of the dynamical system \eqref{DS+}  defined in $(T,+\infty)$ for some $T>0$
with this kind of blow-up behavior.

\begin{proof}[Proof of Theorem \ref{Th exterior ball+-F Introd}]
By the definition and properties of the critical exponent in Sections \ref{section crossing}, \ref{section critical exponent}, we know that $p\in \C$ for $p\in (1,p^*_{a+})$, while $p^*_{a+}\in \F$.  
In the first case the regular trajectory $\Gamma_p$ together with the $X$ and $Z$ axes and the line $L=\{(X,Z):X=\tilde{N}_+-2\}$ bound a region $D$ from which any trajectory can only escape in forward time through $L$. In the second case $\Gamma_p$ and the $X$ and $Z$ axes enclose a bounded invariant region $D$.
In both cases the closure of $D$ contains the points $M_0$ (for $p\geq p^{s,a}_+$) and $A_0$. 

By contradiction assume that a radial solution of \eqref{ProbExterior} exists. Then, by Proposition \ref{prop exterior} the corresponding trajectory $\varXi_p$ is defined in an interval $(t_\delta,+\infty]$ for some $t_\delta>-\infty$, and blows at $t_\delta$ satisfying \eqref{blow up exterior 1Q}.
Since it does not blow up in forward time, by Proposition \ref{AP bounds} (see \eqref{X< tildeN-2}) and Poincaré-Bendixson theorem the $\omega$-limit $\omega(\varXi_p)$ is either $M_0$ (if $p>p^{s,a}_+$), or $A_0$, or a periodic orbit around $M_0$.
In any case $\varXi_p$ should cross $\Gamma_p$ which is not possible.
\end{proof}
	
\begin{rmk}\label{remark exterior}
It is proved in \cite{GILexterior2019}, when $a=0$, that for every $p\in (p^*_{a+},+\infty)$ both a fast decaying and infinitely many slow or pseudo-slow decaying solutions of \eqref{ProbExterior} exist.
In terms of our quadratic system \eqref{DS+} this could be proved using Lemma \ref{claim Gammap blows up M+} for the fast decaying solutions, or studying the trajectories arriving at $M_0$ or at a periodic orbit for the slow or pseudo-slow decaying solutions. However, since the proof of \cite{GILexterior2019} easily extends to the case $a\neq 0$, we prefer to omit the details. 
\end{rmk}

Note that with the analysis of the trajectories blowing up backward in finite time one can only get the existence of a solution $u$ satisfying \eqref{mu delta} at some radius $\mu>0$. Then the solution should be continued (in $3Q$) to reach a positive radius $\rho_0>1$ where $u(\rho_0)=0$, so to verify the Dirichlet problem in the exterior of a ball. This is possible by using a shooting argument from $\mu$, as done for instance in \cite[proof of Theorem 6.1]{GILexterior2019}.

\section{The $\mathcal{M}^-$ case}\label{section Pucci-}

In this section we analyze the complementary case for the operator $\mathcal{M}^-$. Recall its respective dimension-like parameter from \eqref{def dimensional-like} satisfying $\tilde{N}_- \geq N$.
The main difference with the case of $\M^+$ is the reverse ordering of the exponents $p^a_\Delta$ and $p^{p,a}_-$ from \eqref{critical exponents a}, with $p^{s,a}_-\leq p^{p,a}_-\leq p_\Delta^a$. Also, if $\lambda<\Lambda$ then the stationary point $M_0$ is a sink in the interval $(p^{p,a}_-,p^a_\Delta)$, see Proposition \ref{local study stationary points} (4).

\smallskip

We start by pointing out that all properties stated for $\M^+$ in section \ref{section crossing} also hold for $\M^-$. In particular, one gets that the set $\F$ possesses at most one point, which splits the interval $(1,+\infty)$ into two components $\C$ and $\Ps\cup\Sl$ accordingly to Corollary \ref{cor separation p0}.
Moreover, each of these components is nonempty, since one verifies, as in Propositions \ref{p>p^p+} and \ref{p<p_Delta}, the following result.

\begin{prop}\label{p>p_Delta^a and p<p^{p,a}-}
If $p> p_\Delta^a$ then $p\in \mathcal{S}$, and for $p<p_-^{p,a}$ it holds that $p\in \mathcal{C}$.
\end{prop}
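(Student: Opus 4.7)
The plan mirrors the proofs of Propositions \ref{p>p^p+} and \ref{p<p_Delta}, keeping in mind that for $\M^-$ the ordering of critical exponents reverses to $p^{s,a}_- \le p^{p,a}_- \le p^a_\Delta$ and that the relevant dimension-like number is $\tilde{N}_-$. The crucial inputs are Dulac's criterion (Proposition \ref{Dulac}), the stability classification of $M_0$ and $A_0$ (Proposition \ref{local study stationary points}), the a priori bounds in Proposition \ref{AP bounds}, and the Poincar\'e--Bendixson theorem.

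For the first assertion, I would assume $p>p^a_\Delta$. By Proposition \ref{Dulac} the system \eqref{DS-} admits no periodic orbits, and by Proposition \ref{local study stationary points}(4)(ii) the stationary point $M_0$ is a sink, since $p>p^a_\Delta>p^{p,a}_-$. It then suffices to exclude $p\in \C\cup\F$, exactly as in Proposition \ref{p>p^p+}. If $p\in \C$, the trajectory $\Gamma_p$, together with the segment $L=\{X=\tilde{N}_--2\}$ and the positive half-axes, bound a region $D$ from which orbits can leave only forward in time through $L$; the presence of the sink $M_0\in\operatorname{int}(D)$ then contradicts Poincar\'e--Bendixson, since no source or periodic orbit is available to produce backward orbits arriving at $M_0$. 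If $p\in \F$, the same contradiction arises with $\Gamma_p$ and the axes enclosing a bounded invariant region containing $M_0$. The only remaining possibility is $\omega(\Gamma_p)=M_0$, i.e.\ $p\in \Sl$.

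For the second assertion, I would assume $p<p^{p,a}_-$ and split into three subcases. If $1<p<p^{s,a}_-$, then $M_0$ lies outside $1Q$ and $A_0$ is a source (Proposition \ref{local study stationary points}), while Dulac rules out periodic orbits. Since $\Gamma_p$ cannot have a source as $\omega$-limit and $O$ is a saddle with the $X$-axis as its stable direction, $\Gamma_p$ is unbounded and, by Proposition \ref{AP bounds}, blows up in finite forward time, so $p\in \C$. The borderline case $p=p^{s,a}_-$ is treated as in Proposition \ref{p=p^s}: homoclinics at the non-hyperbolic point $A_0=M_0$ are ruled out by the flow directions on $\ell_1^-$ and $\ell_2^-$ combined with a Dulac-type integration on a region enclosed by a trajectory lying in $R^-_\Lambda$, so $\Gamma_p$ again escapes any bounded set and $p\in \C$. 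Finally, for $p^{s,a}_-<p<p^{p,a}_-$, $M_0$ is a source and $A_0$ a saddle, and Dulac again forbids periodic orbits; if $\Gamma_p$ were bounded its $\omega$-limit could only be $A_0$, but then $\Gamma_p$ and the axes would enclose a bounded invariant region containing the source $M_0$ and no periodic orbit, contradicting Poincar\'e--Bendixson. Hence $\Gamma_p$ blows up in finite time and $p\in \C$.

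The main obstacle is purely bookkeeping: verifying that the boundary exponent $p=p^a_\Delta$ is indeed covered by Dulac's criterion for $\M^-$ (which it is, by the sign of the $\Phi$-computation in Proposition \ref{Dulac}, where the contribution from $R^-_\Lambda$ is strictly negative while that from $R^+_\Lambda$ vanishes), and making sure that the three subcases exhaust $(1,p^{p,a}_-)$ whenever $p^{s,a}_-<p^{p,a}_-$, i.e.\ whenever $\lambda<\Lambda$. All the deeper geometric content has already been assembled in Section \ref{section 2}.
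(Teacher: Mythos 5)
Your proposal is correct and follows exactly the route the paper intends: the paper's "proof" of this proposition is simply the remark that one verifies it as in Propositions \ref{p>p^p+} and \ref{p<p_Delta}, and your argument is a faithful adaptation of those two proofs to the $\M^-$ system \eqref{DS-} (using that Dulac's criterion excludes periodic orbits for $p\geq p^a_\Delta$ and for $1<p<p^{p,a}_-$, that $M_0$ is a sink for $p>p^{p,a}_-$ and a source or absent for $p<p^{p,a}_-$, and the trapping-region/Poincaré--Bendixson contradictions). The only cosmetic remark is that $p^{s,a}_-<p^{p,a}_-$ holds for all $a>-1$ independently of $\lambda<\Lambda$, so your three subcases always exhaust $(1,p^{p,a}_-)$.
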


This allows to define, as in Section \ref{section Pucci+}, the critical exponent $p^*_{a-}$ as follows
\begin{center}
$p^*_{a-}=\,\sup \C$.
\end{center}
Then, by Proposition \ref{p>p_Delta^a and p<p^{p,a}-}, $p^*_{a-}\in [\,p^{p,a}_-,\, p^a_\Delta\,]$ and, as for $\M^+$, we call it the critical exponent for $\M^-$.
Next we show that $p^*_{a-}\in \F$ and $p^*_{a-}$ is in the interior of the previous interval.

\begin{theorem}\label{th 5.2F}
The critical exponent $p^*_{a-}$ belongs to $\F$. Thus it is the only exponent in the equation \eqref{P radial m-} for which there exists a unique, up to scaling, fast decaying solution.

\vspace{0.03cm}

Moreover, if $\lambda<\Lambda$, then \eqref{critical ordering M-} holds and there exists $\varepsilon>0$ such that $(p^a_\Delta-\varepsilon,+\infty)\subset \Sl$.
\end{theorem}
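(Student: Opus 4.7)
The plan is to adapt the argument of Theorem~\ref{characterization p*}, with attention to the key structural difference that $M_0$ is a sink for $p\in(p^{p,a}_-,p^a_\Delta)$ and a center at $p^{p,a}_-$, whereas for $\M^+$ the equilibrium $M_0$ was a source in the analogous interval. Three claims must be established: $p^*_{a-}\in\F$; the interval inclusion $(p^a_\Delta-\varepsilon,+\infty)\subset\Sl$; and the strict ordering \eqref{critical ordering M-}.

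First I would show $p^*_{a-}\in\F$ by elimination. The $\M^-$ analog of Remark~\ref{C is open} yields $p^*_{a-}\notin\C$. To rule out $\Ps$, I would invoke the $\M^-$ version of Lemma~\ref{lem l1, l2 for X>alpha} together with continuity of $\Gamma_p$ in $p$: orbits with $p\in\C$ do not cross $\ell_1^-$, so by continuity neither does $\Gamma_{p^*_{a-}}$, whereas every orbit in $\Ps$ crosses $\ell_1^-$ infinitely many times. The new ingredient (absent in the $\M^+$ case) is the exclusion of $\Sl$: if $\Gamma_{p^*_{a-}}\to M_0$ and $M_0$ is a sink, then $\Gamma_{p^*_{a-}}(t_0)$ lies in the open basin of attraction for some finite $t_0$, and by continuous dependence on $p$, $\Gamma_p(t_0)$ also lies in the basin of $M_0(p)$ for $p$ close to $p^*_{a-}$, giving $\Gamma_p\to M_0(p)$ and contradicting $p\in\C$ for $p$ slightly smaller than $p^*_{a-}$. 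The boundary case $p^*_{a-}=p^{p,a}_-$ (where $M_0$ is a center) is ruled out in the last step. Uniqueness up to scaling then follows from the $\M^-$ counterpart of Corollary~\ref{prop at most one F}.

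Next I would establish the interval claim. Proposition~\ref{p>p_Delta^a and p<p^{p,a}-} already gives $(p^a_\Delta,+\infty)\subset\Sl$; at $p=p^a_\Delta$ Dulac's criterion still forbids periodic orbits and $M_0$ is still a sink, so the invariant-region contradiction used in the proof of Proposition~\ref{p>p_Delta^a and p<p^{p,a}-} applies verbatim and yields $p^a_\Delta\in\Sl$. To propagate leftward, I would fix a compact set $K$ inside the basin of $M_0(p^a_\Delta)$ such that $\Gamma_{p^a_\Delta}(t_0)\in K$ for some finite $t_0$; by continuity in $p$ of both the basin and the flow, $K$ remains in the basin of $M_0(p)$ and $\Gamma_p(t_0)\in K$ for $p$ in a left neighborhood of $p^a_\Delta$, so such $p$ lie in $\Sl$. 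Disjointness of $\F$ and $\Sl$ then gives $p^*_{a-}\leq p^a_\Delta-\varepsilon<p^a_\Delta$.

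The main obstacle is the remaining strict inequality $p^*_{a-}>p^{p,a}_-$. I would argue by contradiction, assuming $p^{p,a}_-\in\F$, so that $\Gamma_{p^{p,a}_-}$ together with the coordinate axes bounds an invariant region $D$ containing the center $M_0$. By Proposition~\ref{Dulac}, every periodic orbit at $p^{p,a}_-$ lies in $R^-_\Lambda\cup\{P\}$ and none is contained in $R^+_\Lambda$; hence any orbit through a point of the nonempty open set $D\cap R^+_\Lambda$ is bounded, nonperiodic, and by Poincaré-Bendixson its $\omega$-limit must be either a stationary point in $\bar D$ or a periodic orbit in $\bar D$. The saddles $O,N_0,A_0$ have stable manifolds lying on $\partial D$, and $M_0$ has no stable manifold as it is a center, so the $\omega$-limit must be a periodic orbit $\gamma\subset R^-_\Lambda$. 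But at $p=p^{p,a}_-$ the system restricted to $R^-_\Lambda$ is Hamiltonian (since $\Phi\equiv 0$ there by the Dulac computation), so conservation of energy forces any orbit approaching $\gamma$ while remaining in $R^-_\Lambda$ to coincide with $\gamma$ from some time onward, and by uniqueness of the initial value problem then globally, contradicting its origin in $R^+_\Lambda$. The delicate technical point will be to rule out indefinite re-entries through $\ell_-$ during the asymptotic approach to $\gamma$, which uses Proposition~\ref{Dulac}(ii) together with the geometric analysis of the vector field on $\ell_-$ from Proposition~\ref{prop flow}(1) and Remark~\ref{remark concavity}.
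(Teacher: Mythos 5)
Your proposal is correct and follows essentially the same route as the paper: openness of $\C$ and the $\ell_1$-crossing lemma to exclude $\C$ and $\Ps$, the sink's attracting neighborhood plus continuity in $p$ to exclude $\Sl$ and to propagate $\Sl$ to a left neighborhood of $p^a_\Delta$, and the invariant region bounded by $\Gamma_{p^*_{a-}}$ combined with Poincar\'e--Bendixson and Dulac's criterion to rule out $p^*_{a-}\in\{p^{p,a}_-,p^a_\Delta\}$. The only (harmless) divergences are organizational: you establish $p^a_\Delta\in\Sl$ directly by the invariant-region contradiction rather than via $p^*_{a-}\neq p^a_\Delta$ and Corollary \ref{cor separation p0}, and at $p^{p,a}_-$ you phrase the contradiction through the first integral in $R^-_\Lambda$ (conservation of the energy $E$ forbidding non-periodic accumulation on a cycle) where the paper instead produces a periodic orbit crossing $\ell_-$ twice and kills it with Proposition \ref{Dulac}(ii) — both rest on the same computation $\Phi\equiv 0$ in $R^-_\Lambda$ at $p^{p,a}_-$.
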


\begin{proof}
Obviously $p^*_{a-}\not\in \C$ because $\C$ is open, see Remark \ref{C is open}. Moreover, $p^*_{a-}$ cannot belong to $\Ps$; otherwise $\Gamma_{p^*_{a-}}$ should cross the line $\ell_1^+$ by Lemma \ref{lem l1, l2 for X>alpha}(ii), while $\Gamma_p$ for $p\in \C$ never does it.

Finally we show that $p^*_{a-}\not \in \Sl$. Indeed, if this was the case then $p^*_{a-}>p^{p,a}_-$ because $M_0$ is a center at $p^{p,a}_-$, see Proposition \ref{prop M0 center}.
Hence $M_0=M_0(p)$ is a sink for every $p$ in a neighborhood $I_\varepsilon=(p^*_{a-}-\varepsilon, \, p^*_{a-}+\varepsilon)$ for some $\varepsilon>0$, by Proposition \ref{local study stationary points} (4). 
Then there exists a maximal ball $B_{\eta_p}$ centered at $M_0(p)$ with the property that any trajectory $\tau_p$ entering in $B_{\eta_p}$ satisfies $\omega(\tau_p)=M_0(p)$, see \cite{Hale}.	
Since we are assuming that $p^*_{a-}\in \Sl$ then $\omega(\Gamma_{p^*_{a-}})=M_0(p^*_{a-})$. By the continuity of the dynamical system with respect to the parameter $p$, also $\omega(\Gamma_p)=M_0(p)$ for $p\in I_\varepsilon$ (up to diminishing $\varepsilon$). 
But this contradicts the definition of $p^*_{a-}$, since $\Gamma_q$ blows up in finite time when $q\in \C$. 

\vspace{0.03cm}

Hence, $p^*_{a-}\in \F$. The proof that $p^*_{a+}$ cannot be $p^{p,a}_-$ nor $p^a_\Delta$ is the same as the one for $\M^+$, see Theorem \ref{characterization p*}. It relies on Proposition \ref{Dulac} which states, in particular, that there are no periodic orbits of \eqref{DS-} for $p^a_\Delta$. This also proves that the regular trajectory $\Gamma_{p^a_\Delta}$ converges to $M_0=M_0(p^a_\Delta)$, so that $p^a_\Delta\in \Sl$.
Next, a continuity argument as in the first part of this proof shows that $p\in (p^a_\Delta-\varepsilon,p^a_\Delta)$ also belongs to $\Sl$ for sufficiently small $\varepsilon>0$.
Consequently, for such $p$'s it does not exist a periodic orbit around $M_0$, and the proof is complete.
\end{proof}

\begin{proof}[Proof of Theorem \ref{Th M-}]
All previous results obtained for $\M^-$ prove the theorem. In particular, the statements $(iii)$--$(iv)$ follow from Theorem \ref{th 5.2F}.
\end{proof}

We finish the section with the proof of Theorem \ref{Th singular- Introd} about singular solutions. 

\begin{proof}[Proof of Theorem \ref{Th singular- Introd}]
It is enough to prove $(iii)$ and $(v)$, since the proof of the other items are the same as for Theorem \ref{Th M+ singular}.  Let us analyze the whole interval $p\in(p^{p,a}_-,p_\Delta^a]$.
 
Recall that $M_0$ is a sink whenever $p> p^{p,a}_-$. 
Therefore, there is no trajectory coming out from $M_0$ in the range $p\in (p_-^{p,a},p_\Delta^a]$. 
In particular, $\alpha(\Upsilon_p)$ is never $M_0$ in this range of $p$.

For $p\in (p^{p,a}_-,p^*_{a-})$ we have $p\in \mathcal{C}$. In this case the regular trajectory $\Gamma_p$ and the line $L=\{(X,Z): X=\tilde{N}_--2\}$, together with the $X$ and $Z$ axes, create a bounded region from which any orbit may only leave forward in time through $L$; recall that the flow is going out on $L$, see Figure \ref{Fig flow}.
Therefore, Poincaré-Bendixson theorem implies the existence of a periodic orbit $\theta_p$ around $M_0$ such that $\alpha(\Upsilon_p)=\theta_p$. 
This immediately determines four types of nontrivial positive pseudo--blowing up solutions of \eqref{P}--\eqref{H singular} (in the case of $\M^-$):

(1) a fast decaying solution corresponding to the trajectory $\Upsilon_p$;

(2) solutions with slow decay, whose corresponding orbits lie inside a minimal periodic orbit $\theta_0$ around $M_0$; here $\theta_0$ crosses $\ell_-$ twice due to Proposition \ref{Dulac};

(3) solutions of the Dirichlet problem in $B_R\setminus\{0\}$, such that the corresponding orbits are issued from $\theta_p$ and blow up in finite forward time;

(4) pseudo-slow decaying solutions, which correspond to the periodic orbits.
\\
All of these singular solutions change concavity infinitely many times in a neighborhood of $r=0$. Further, there are infinitely many solutions of types (2) and (3); see Figure \ref{Fig Psingular M-}. 
Thus, $(iii)$ holds. 

To prove $(v)$, let us recall that at $p_\Delta^a$ no periodic orbits are admissible by Proposition \ref{Dulac}. Also, by Theorem \ref{th 5.2F} there exists $\varepsilon>0$ such that $p\in \Sl$ for all $p\in (p_\Delta-\varepsilon,+\infty)$.
Now, arguing as in Lemma \ref{claim Gammap blows up M+} one sees that $\Upsilon_p$ blows up in finite backward time for $p>p^*_{a-}$, so $(v)$ is proved.
\end{proof}

\begin{rmk}
In the case of $\M^-$ the existence of singular solutions in the range $(p^*_{a-},p^a_\Delta-\varepsilon)$ is not guaranteed, though solutions as in the cases $(2)$--$(4)$ in the proof above are admissible.
\end{rmk}

Concerning the exterior domain solutions, we have already observed in Section \ref{section singular} that the proof of Theorem \ref{Th exterior ball+-F Introd} is the same for both operators $\M^\pm$.

\begin{figure}[!htb]
	\centering
	\includegraphics[scale=0.4]{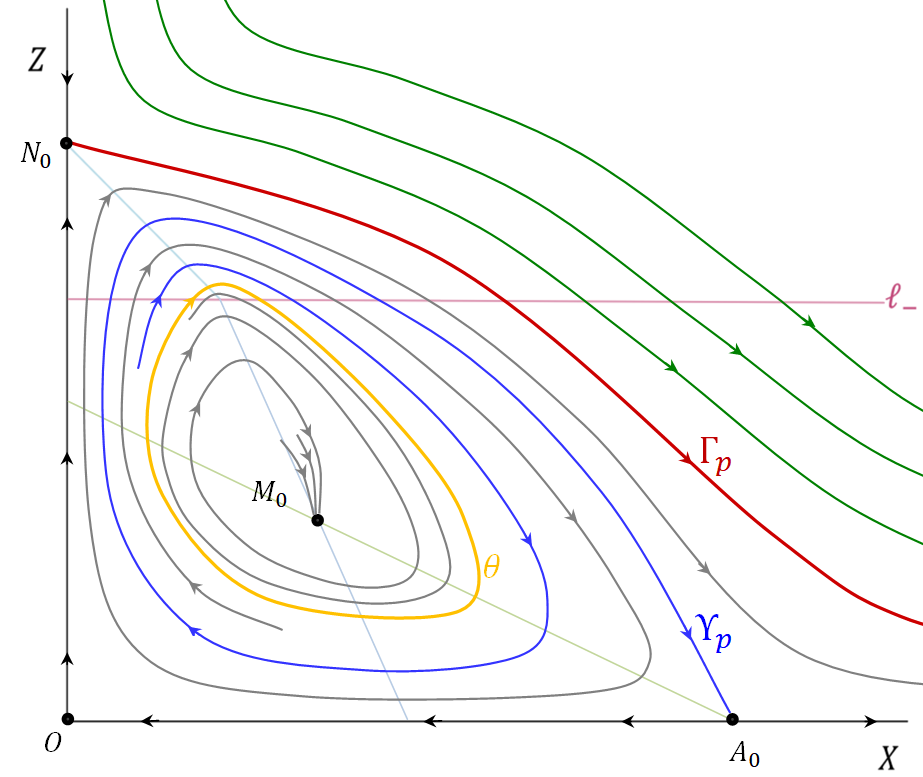}
	\caption{Case $p\in (p^{p,a}_-,p^*_{a-})$ for $\M^-$, $M_0$ is a sink. There are infinitely many pseudo--blowing up solutions: a unique fast decaying (given via the orbit $\Upsilon_p$); a pseudo-slow decaying (periodic orbit~$\theta$); infinitely many in a ball (outside $\theta$); infinitely many slow decaying (inside $\theta$).} 
	\label{Fig Psingular M-}
\end{figure}

\section{Appendix}

\subsection{Local study}

In this section we detail the proof of Proposition \ref{local study stationary points}. The linearization for $\M^+$, $\tilde{N}=\tilde{N}_+$, is
\begin{align*}
L (X,Z) =
\left(
\begin{array}{cc}
\partial_X f^+ & \partial_Z f^+ \\
\partial_X g^+ & \partial_Z g^+
\end{array}
\right)
= \left(
\begin{array}{cc}
2X - (N-2) + \frac{Z}{\lambda} & \frac{X}{\lambda} \\
-pZ & N+a-pX-\frac{2Z}{\lambda}
\end{array}
\right) \quad \textrm{in $R^+_\lambda$,}
\end{align*}
\begin{align*}
L (X,Z)=
\left(
\begin{array}{cc}
\partial_X f^- & \partial_Z f^- \\
\partial_X g^- & \partial_Z g^-
\end{array}
\right)=
\left(
\begin{array}{cc}
2X - (\tilde{N}-2) + \frac{Z}{\Lambda} & \frac{X}{\Lambda} \\
-pZ & \tilde{N}+a-pX-\frac{2Z}{\Lambda}
\end{array}
\right)  \quad \textrm{in $R^-_\lambda$.}
\end{align*}
For instance, at $N_0=(0,\lambda (N+a))$ and $A_0=(\tilde{N}-2,0)$ one has
\begin{center} 
	$L(N_0) = \left(\begin{array}{cc}2+a & 0 \\-p\lambda (N+a) & -N-a\end{array}\right),
	$
\;
	$L(A_0) = \left(\begin{array}{cc}\tilde{N} - 2 & \frac{\tilde{N} - 2}{\Lambda} \\0 & \tilde{N} +a-p(\tilde{N} - 2)\end{array}\right).$
\end{center}
The eigenvalues for $N_0$ are $2$ and $-N-a$, while for $A_0$ are $\sigma_1 = \tilde{N}_\pm - 2$ and $\sigma_2 = \tilde{N}_\pm +a- p(\tilde{N} - 2) $.
Recall that $M_0=(X_0,Z_0)$, where $X_0 = \alpha=\frac{a+1}{p-1}$ and $Z_0 =\Lambda(\tilde{N}-p\alpha+a) = \Lambda(\tilde{N}-2 - \alpha)$,
\begin{align*}
L(M_0) = \left(
\begin{array}{cc}
\alpha & \frac{\alpha}{\Lambda} \\
-p\Lambda(\tilde{N} - p\alpha+a) & -(\tilde{N} - p\alpha+a)
\end{array}
\right).
\end{align*}
In order to analyze the eigenvalues of $L(M_0) $ one needs to look at the roots of the equation
\begin{center}
	$\sigma^2 + \sigma\left( \frac{Z_0}{\Lambda} - X_0 \right) + X_0 (p-1) \frac{Z_0}{\Lambda}  = 0.
	$
\end{center}
They are given by
$2\sigma_\pm = { X_0 - \frac{Z_0}{\Lambda} \pm \sqrt{\Delta} }$, 
where $\Delta = \left( \frac{Z_0}{\Lambda} - X_0 \right)^2 - 4(2+a) \frac{Z_0}{\Lambda}$.

Note that $X_0 = \frac{Z_0}{\Lambda}$ is equivalent to
$ \alpha = \frac{\tilde{N}-2}{2}$, i.e.\ $p=p^{p,a}_+$.
In this case $\mathrm{Re}(\sigma_\pm)=0$ and the roots are purely imaginary.
Moreover, $X_0 > \frac{Z_0}{\Lambda}\Leftrightarrow  p<p^{p,a}_+$, and $X_0 < \frac{Z_0}{\Lambda}\Leftrightarrow  p>p^{p,a}_+$.

If $\textrm{Im}(\sigma_\pm)\neq 0$, this already determines the sign of $\mathrm{Re}(\sigma_\pm)$.
Assume then $\textrm{Im}(\sigma_\pm)= 0$ i.e.\  $\sigma_\pm\in \real$.
Observe that $\Delta<(X_0-\frac{Z_0}{\Lambda} )^2 $ as far as $M_0$ stays in $1Q$. This yields $\sigma_->0$ for $p_+^{s,a}<p<p_+^{p,a}$ (so $\sigma_\pm>0$ and $M_0$ is a source); while $\sigma_+<0$ if $p>p_+^{p,a}$ (so $\sigma_\pm<0$ and $M_0$ is a sink).

It is possible to prove that $M_0$ is a saddle point in the fourth quadrant when $1<p<p^{s,a}_+$. However, this would correspond to solutions of the absorption problem $\mathcal{M}^+u-u^p=0$. See \cite{BV} in the case of the Laplacian operator.

\subsection{Energy analysis}

Let us consider the energy functional $E$ of the operator $\M^+$ in the region $R^-_\lambda$, which is a slight variation of the energy of the Laplacian operator in dimension $\tilde{N}_+$ treated in \cite{BV},
\begin{align*}
\textstyle{E(t,X,Z)
=e^{t(\tilde{N}_+-2-2\alpha)} \,X (XZ)^\alpha \left\{ \frac{X}{2}+\frac{Z}{\Lambda (p+1)}-\frac{\tilde{N}_+}{p+1}
\right\} \quad \textrm{ in } R^-_\lambda\cup \ell_+ }
\end{align*}
understood as natural extension up to $\ell_+$.
In terms of $u$, the energy functional $E$ for $\M^+$ reads as
\begin{align*}
\textstyle{E(r)=E(r,u)=
r^{\tilde{N}_+} \left( \,\frac{(u^\prime)^2}{2} +\frac{1}{\Lambda}\frac{r^au^{p+1}}{p+1} \right) +\frac{\tilde{N}_+}{p+1} \,u u^\prime  r^{\tilde{N}_+-1} \quad \textrm{ if } u^{\prime\prime}\geq 0.}
\end{align*}
Of course these two expressions are equivalent after the transformation \eqref{X,Z a}. Moreover,
\begin{center}
$E^\prime(r)=
\,{r^{\tilde{N}_++a-1}} (u^\prime)^2
\left( \frac{\tilde{N}_++a}{p+1}-\frac{\tilde{N}_+-2}{2}
\right) \quad \textrm{ if } u^{\prime\prime}\geq 0 ,$
\end{center}
and so the following monotonicity holds
\begin{align}\label{sign Eprime}
\dot{E}<0 \; \textrm{ if } p>p_+^{p,a}\, , \quad 
\dot{E}=0 \; \textrm{ if } p=p_+^{p,a}\, , \quad
\dot{E}>0 \; \textrm{ if } p<p_+^{p,a}
\qquad \textrm{ in } \;R^-_\lambda\cup \ell_+.
\end{align}

Now we investigate the precise behavior of the trajectories close to $M_0$ at $p=p^p_\pm$. 
Here, $\lambda \leq \Lambda$ and the result gives an alternative proof in the case of the Laplacian operator $\lambda=\Lambda=1$ in \cite{BV89arma}.

\begin{prop}\label{prop M0 center}
	$M_0$ is a center when $p=p^{p,a}_\pm$. 
\end{prop}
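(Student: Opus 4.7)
The plan is to use the energy functional $E$ from the appendix as a local first integral of \eqref{DS+} (resp.\ \eqref{DS-}) near $M_0$ at $p=p^{p,a}_+$ (resp.\ $p=p^{p,a}_-$), and then invoke the Morse lemma together with the Poincaré--Bendixson theorem to show that $M_0$ is encircled by a continuum of periodic orbits. I will describe the argument for $\M^+$; the $\M^-$ case is completely analogous after swapping $\tilde N_+\leftrightarrow \tilde N_-$ and $\lambda\leftrightarrow\Lambda$ in the construction of the energy.

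First I would verify that at $p=p^{p,a}_+$ the point $M_0$ lies strictly inside $R^-_\lambda$. Since $\alpha=(\tilde N_+-2)/2$ at this exponent, one computes $Z_0=\Lambda\alpha=(\lambda(N-1)-\Lambda)/2$, which satisfies $0<Z_0<\lambda(N-1)$ because the standing hypothesis $\tilde N_+>2$ is equivalent to $\lambda(N-1)>\Lambda$. Hence a full neighborhood of $M_0$ lies in $R^-_\lambda$, where the vector field in \eqref{Prob x=(X,Z)} is smooth. Moreover, the time-dependent prefactor $e^{t(\tilde N_+-2-2\alpha)}$ in $E$ reduces to $1$ precisely at $p^{p,a}_+$, so $E=E(X,Z)$ becomes autonomous; by \eqref{sign Eprime} one has $\nabla E\cdot F\equiv 0$ throughout $R^-_\lambda$, i.e., $E$ is a genuine first integral of \eqref{DS+} in a neighborhood of $M_0$.

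Next I would show that $M_0$ is a non-degenerate critical point of $E$. Differentiating the identity $\nabla E(x)\cdot F(x)\equiv 0$ at $x=M_0$ (using $F(M_0)=0$) yields $\nabla E(M_0)\cdot DF(M_0)\,y=0$ for every $y\in\real^2$; by Proposition \ref{local study stationary points}(4)(iii) the matrix $DF(M_0)$ has the nonzero purely imaginary eigenvalues $\pm i\omega$, hence is invertible, forcing $\nabla E(M_0)=0$. For the Hessian, the quadratic form $Q(y)=\tfrac12 y^T D^2E(M_0)y$ is a first integral of the linearized system $\dot y=DF(M_0)\,y$, which is a pure rotation in suitable real coordinates; the commutation condition $D^2E(M_0)\,DF(M_0)+DF(M_0)^T D^2E(M_0)=0$ then forces $D^2E(M_0)$ to be a scalar multiple of a fixed positive definite matrix, so $D^2E(M_0)$ is either $0$ or definite. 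A one-line direct evaluation (for instance of $\partial_X^2 E$ at $M_0$) suffices to exclude the degenerate case.

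Finally, by the Morse lemma the level sets $\{E=c\}$ for $c$ close to but distinct from $E(M_0)$ are topological circles enclosing $M_0$ inside a small neighborhood. Each is invariant under the flow, bounded, and contains no equilibrium of \eqref{DS+}, so by Poincaré--Bendixson it coincides with a periodic orbit; thus $M_0$ is a nonlinear center. The main obstacle is verifying the non-degeneracy of $D^2E(M_0)$: while the structural argument via uniqueness (up to scalar) of a conserved quadratic form under a rotation is conceptually clean, it cannot dispense with a short explicit computation of one entry of the Hessian of $E$ at $M_0$ to rule out the identically-zero case.
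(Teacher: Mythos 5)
Your proof is correct, but it follows a genuinely different route from the paper's. The paper also starts from the fact that at $p=p^{p,a}_\pm$ the energy $E$ is conserved in $R^-_\lambda\cup\ell_+$, but instead of working locally at $M_0$ it restricts $E$ to the transversal section $\ell_2^+$, reduces the level sets to the scalar equation $(\tilde N-pX)X^p\equiv c$, and shows by elementary calculus on $h(X)=(\tilde N-pX)X^p$ that every trajectory contained in $R^-_\lambda\cup\ell_+$ meets $\ell_2^+$ at exactly two points (one on each side of $X=\alpha$) and therefore closes up. You instead observe that $M_0$ lies in the interior of $R^-_\lambda$ where $E$ is a smooth autonomous first integral, deduce $\nabla E(M_0)=0$ from the invertibility of $DF(M_0)$, show via the commutation identity $D^2E(M_0)DF(M_0)+DF(M_0)^TD^2E(M_0)=0$ that the Hessian is a multiple of a definite matrix, and conclude with the Morse lemma plus Poincaré--Bendixson that the nearby level sets are periodic orbits. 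Both arguments are sound; your deferred nondegeneracy check does go through (e.g.\ restricting $E$ to the line $Z=Z_0$ gives $\partial_X^2E(M_0)=Z_0^\alpha\alpha^\alpha\frac{\alpha+2}{2}>0$, so the Hessian is positive definite), and your structural argument is cleaner and more portable. What you lose is that your conclusion is purely local: the paper's proof actually establishes that \emph{every} orbit lying in $R^-_\lambda\cup\ell_+$ is periodic, a semi-global fact that is invoked later (in the proof of Theorem \ref{characterization p*} and in Remark \ref{remark singular periodic}) and would not follow from the Morse-lemma neighborhood alone. For the literal statement of Proposition \ref{prop M0 center}, however, your argument is complete.
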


\begin{proof} We present the proof for $\M^+$; for $\M^-$ it is the same in light of Section \ref{section Pucci-}. Let $\tau=(X,Z)$ be an orbit contained in $R^-_\lambda\cup \ell_+$. 
Let us show that $\tau$ is periodic.
To simplify notation let $a=0$, $\tilde{N}=\tilde{N}_+$. The energy of $\tau$ on the line $\ell_2^+$ is given by
	\begin{center}
		$E_{|_{\ell_2^+ \cap R^-_\lambda}}=E^-_{|_{\ell_2^+}}(X)  = - \frac{\Lambda^\alpha}{\tilde{N}} \,X^{\alpha+2} \,  (\tilde{N}-pX)^\alpha  \quad 
		\textrm{ at }\, p=p^p_+ =p^{p,0}_+.$
	\end{center}
	Since the energy is a constant function of $t$ when $p=p^p_+ $ with $\frac{\alpha+2}{\alpha}=p$, then
	\begin{align}\label{eq h(X) l2}
	(\tilde{N}-pX)X^p \equiv c>0 \quad \textrm{on}\;\;\ell_2^+.
	\end{align}
	
	Now we may translate the information from \eqref{eq h(X) l2} in terms of the function $h$ defined as
	\begin{center}
		$h(X)=(\tilde{N}-pX)X^p$,\quad  for $X\in [\,{1}/{p},\,{\tilde{N}}/{p}\,]$,\;\; where $p=p^p_+$,
	\end{center}
	for which \eqref{eq h(X) l2} represents its level curves. The domain $[\,{1}/{p},\,{\tilde{N}}/{p}\,]$ entails the behavior of $h$ in the respective interval delimited by $\ell_2^+$ on $R^-_\lambda$, up to the boundary.
	
	Let us analyze the function $h$; it is positive at $1/p$, and equals to zero at $\tilde{N}/p$.
	Since 
	\begin{center}
		$h^\prime (X)=pX^p (\,\frac{\tilde{N}}{X} -1-p)$, \quad with \, $\frac{\tilde{N}}{1+p}=\frac{\tilde{N}-2}{2}=\alpha$, \; $p=p^p_+$,
	\end{center}
	then $h$ is increasing when $X<\alpha$, decreasing for $X>\alpha$, and it assumes the positive maximum value $h(\alpha)=\alpha^{p+1}:=c_{\infty}$ at $X=\alpha$.
	Moreover, note that $h(1/p)=(\tilde{N}-1)(1/p)^p+ :=c_1$.
	
	\smallskip
	
	Here $h$ is a polynomial function which prescribes the value of the energy on $\ell_2^+\cap R^-_\lambda$, namely $h=-E^{1/\alpha}\equiv c$.
	For any $k\in \n$ with $c_k\in [\,c_1,c_{\infty}\,)$, the line $h\equiv c_k$ intersects the graph of $h$ at exactly two points $X_1^k,X_2^k$ such that $X_1^k<\alpha<X_2^k$. 
	Also, they satisfy
	\begin{align}\label{eq center ck}
	\textrm{$c_k = h(X_1^k)=h(X_2^k)\rightarrow  h(\alpha)=c_\infty$ \;\; when\;\; $X_i^k \rightarrow \alpha$\;\, as $k\rightarrow +\infty$,\, $i=1,2$.}
	\end{align}
	Furthermore, the line $h=c_\infty$ intersects the graph of $h$ only once at the point $X=\alpha$.
	
	In our phase plane context, this means that any trajectory $\tau$ contained in $R^-_\lambda\cup\ell_+$ bisects the line $\ell_2^+$ at exactly two points $P_1=(X_1,Z_1)$, $P_2=(X_2,Z_2)$, with $X_1< \alpha$, $X_2 > \alpha$.
	By Proposition~\ref{prop flow} (3) the flow moves horizontally on $\ell_2^+$, namely to the right for $X<\alpha$, and to the let when $X> \alpha$.
	
Observe that $\ell_2^+$ is a transversal section to the flow, on which any trajectory approaching $M_0$ must pass across, either in the past or in the future.	Hence, the trajectory $\tau$ has to be closed, by moving clockwisely. 	Since this dynamics is realized for any trajectory contained on $R^-_\lambda\cup\ell_+$, and \eqref{eq center ck} holds, in particular any trajectory close to $M_0$ is periodic, so $M_0$ is a center. 
\end{proof}

\smallskip

\textbf{{Acknowledgments.}} 
Part of this work was done when the first and second authors were visiting Sapienza University of Rome. They would like to thank the warm hospitality and care provided by the people in the Math Department.

\smallskip

L.\ Maia was supported by FAPDF, CAPES, and CNPq grant 308378/2017 -2. G.\ Nornberg was supported by FAPESP grants 2018/04000-9 and 2019/031019-9, São Paulo Research Foundation. 
F.\ Pacella was supported by INDAM-GNAMPA.

\bibliography{bibtex}
\bibliographystyle{abbrv}

\end{document}